\documentclass{amsart}
\usepackage{amsmath,amsfonts,amsthm,amssymb}
\usepackage[all]{xy}
\usepackage{color}
\usepackage[final]{optional} % comm/final for comm/no comm
\usepackage{hyperref}
\newcommand\comm[1]{\opt{comm}
{{\color{red}$\blacktriangleright$\ \small\sf#1$\blacktriangleleft$}}}
\newcommand\BC{{\mathbb C}}
\newcommand\BF{{\mathbb F}}
\newcommand\BQ{{\mathbb Q}}
\newcommand\cC{{\mathcal C}}
\newcommand\cF{{\mathcal F}}
\newcommand\cL{{\mathcal L}}
\newcommand\cU{{\mathcal U}}
\newcommand\cS{{\mathcal S}}
\newcommand\cT{{\mathcal T}}
\newcommand\bB{{\mathbf B}}
\newcommand\bG{{\mathbf G}}

\newcommand\bL{{\mathbf L}}
\newcommand\bM{{\mathbf M}}
\newcommand\bP{{\mathbf P}}
\newcommand\bQ{{\mathbf Q}}
\newcommand\bS{{\mathbf S}}
\newcommand\bT{{\mathbf T}}
\newcommand\bU{{\mathbf U}}
\newcommand\bV{{\mathbf V}}
\newcommand\bX{{\mathbf X}}
\newcommand\bY{{\mathbf Y}}
\newcommand\comp[1]{^{[#1]}}
\newcommand\compF[1]{^{[#1]F}}
\newcommand\Bo{{\bB^0}}
\newcommand\Go{{\bG^0}}
\newcommand\GoF{{(\Go)^F}}
\newcommand\Gun{{\bG^1}}
\newcommand\GunF{{(\bG^1)^F}}
\newcommand\GsoF{{C_\bG(s)^{0F}}}
\newcommand\Gunu{{C_\bG(s)^0\cdot u}}
\newcommand\Gunsu{{C_\bG(s)^0\cdot su}}
\newcommand\Gso{{\bG^{\sigma 0}}}
\newcommand\Gxo{{\bG^{x 0}}}
\newcommand\Gto{{\bG^{t0}}}
\newcommand\GtoF{{(\Gto)^F}}
\newcommand\GloF{{(\bG^{l0})^F}}
\newcommand\Glo{{\bG^{l0}}}
\newcommand\Lo{{\bL^0}}
\newcommand\Lun{{\bL^1}}
\newcommand\LunF{{(\bL^1)^F}}
\newcommand\Mun{{\bM^1}}
\newcommand\Lto{{\bL^{t0}}}
\newcommand\Llo{{\bL^{l0}}}
\newcommand\LoF{{(\Lo)^F}}
\newcommand\Lso{{\bL^{\sigma 0}}}
\newcommand\LloF{{(\bL^{l0})^F}}
\newcommand\Mso{{\bM^{\sigma 0}}}
\newcommand\Po{{\bP^0}}
\newcommand\Tun{{\bT^1}}
\newcommand\TunF{{(\bT^1)^F}}
\newcommand\cTun{{\cT_1}}
\newcommand\cTunF{{\cT_1^F}}
\newcommand\Tpun{{\bT^{\prime1}}}
\newcommand\To{{\bT^0}}
\newcommand\ToF{{(\To)^F}}
\newcommand\Tpo{{\bT^{\prime 0}}}
\newcommand\TpoF{{(\bT^{\prime 0})^F}}
\newcommand\Tso{{(\bT^s)^0}}
\newcommand\Gunqss{{\cC(\Gun)_{\text{\rm qss}}}}
\newcommand\Tunrat{{\bT^{1\text{\rm rat}}}}
\newcommand\Tsigo{{\bT^{\sigma0}}}
\newcommand\Ls{{\cL_\sigma}}
\newcommand\Fq{{\BF_q}}
\newcommand\Fqbar{{\overline\BF_q}}
\newcommand\hTunsu{{\lexp h\bT\cap\Gunsu}}
\newcommand\Tunsu{{\Tun\cap\Gunsu}}
\newcommand\Lunsu{{C_\bL(s)^0\cdot su}}
\newcommand\Lunu{{C_\bL(s)^0\cdot u}}
\newcommand\qss{quasi-semi-simple}
\newcommand\eps{{\varepsilon}}
\newcommand\sF{{\sigma F}}
\newcommand\st{{\sigma t}}
\newcommand\sll{{\sigma l}}
\newcommand\lps{{l'\sigma}}
\newcommand\GlpsosF{{(\bG^\lps)^0}^F}
\newcommand\GlsosF{{(\bG^\sll)^0}^\sF}
\newcommand\NFsF{n_{F/\sF}}
\newcommand\RTG{{R_\Tun^\Gun}}
\newcommand\RLG{{R_\Lun^\Gun}}
\newcommand\ShFsF{\Sh_{F/\sF}}
\newcommand\sRLG{{\lexp *R_\Lun^\Gun}}
\newcommand\YUF{Y_{\bU,F}}
\newcommand\YUsF{Y_{\bU,\sF}}
\newcommand\sFti{{\sigma\lexp F\lambda\inv}}
\newcommand\unip{{\text{unip}}}
\DeclareMathOperator\D{\mathrm{D}}
\DeclareMathOperator\Id{\mathrm{Id}}
\DeclareMathOperator\ad{\mathrm{ad}}
\DeclareMathOperator\diag{\mathrm{diag}}
\DeclareMathOperator\Ind{\mathrm{Ind}}
\DeclareMathOperator\Irr{\mathrm{Irr}}
\DeclareMathOperator\St{\mathrm{St}}
\DeclareMathOperator\Res{\mathrm{Res}}
\DeclareMathOperator\Trace{\mathrm{Trace}}
\DeclareMathOperator\Sh{\mathrm{sh}}
\DeclareMathOperator\SL{\mathrm{SL}}
\DeclareMathOperator\GL{\mathrm{GL}}
\DeclareMathOperator\PGL{\mathrm{PGL}}
\DeclareMathOperator\SO{\mathrm{SO}}
\DeclareMathOperator\Sp{\mathrm{Sp}}
\DeclareMathOperator\Spin{\mathrm{Spin}}
\DeclareMathOperator\rad{\mathrm{rad}}
\newcommand\Qlbar{{\overline\BQ_\ell}}
\def\scal#1#2#3{\langle#1,#2\rangle_{#3}}
\newcommand\genby[1]{\mathopen<#1\mathclose>}
\newcommand\inv{^{-1}}
\newcommand\lexp[2]{\kern\scriptspace\vphantom{#2}^{#1}\kern-\scriptspace#2}
\newcommand\addots{%
% antidiagonale de points dans une matrice
        \mathinner{\mkern1mu\raise1pt\vbox{\kern7pt\hbox{.}}\mkern2mu
    \raise4pt\hbox{.}\mkern2mu\raise7pt\hbox{.}\mkern1mu}}
\title{Complements on disconnected reductive groups}
\author{F.~Digne and J.~Michel}
\address[J.~Michel]{Institut de Math\'ematiques de Jussieu -- Paris rive
gauche, Universit\'e Denis Diderot, B\^atiment Sophie Germain,
75013, Paris France.}
\email{jean.michel@imj-prg.fr}
\urladdr{webusers.imj-prg.fr/$\sim$jean.michel}
\address[F.~Digne]{Laboratoire Ami\'enois de Math\'ematique Fondamentale et 
Appliqu\'ee, CNRS UMR 7352, Universit\'e de Picardie-Jules Verne,
80039 Amiens Cedex France.}
\email{digne@u-picardie.fr}
\urladdr{www.lamfa.u-picardie.fr/digne}

\subjclass[2010]{20G15,20G40,20C33,20G05}
\newtheorem{proposition}[equation]{Proposition}
\newtheorem{definition}[equation]{Definition}
\newtheorem{theorem}[equation]{Theorem}
\newtheorem{corollary}[equation]{Corollary}
\newtheorem{lemma}[equation]{Lemma}
\theoremstyle{definition}
\newtheorem{example}[equation]{Example}
\theoremstyle{remark}
\newtheorem{remark}[equation]{Remark}
\numberwithin{equation}{section}

\dedicatory{Dedicated to the memory of Robert Steinberg}
\begin{document}
\begin{abstract}
We present various results on disconnected reductive groups, in particular
about the characteristic $0$ representation theory of such groups over finite
fields.
\end{abstract}
\maketitle
\section{Introduction}\label{intro}
Let  $\bG$  be  a  (possibly  disconnected)  linear algebraic group over an
algebraically closed field. We assume that the connected component $\Go$ is
reductive,  and then call $\bG$  a (possibly disconnected) reductive group.
This  situation was studied  by Steinberg in  \cite{St} where he introduced
the notion of \qss\ elements.

Assume  now that $\bG$  is over an  algebraic closure $\Fqbar$ of the finite
field  $\Fq$, defined over $\Fq$  with corresponding Frobenius endomorphism
$F$.  Let $\Gun$ be an $F$-stable connected  component of $\bG$. We want to
study  $\GoF$-class functions on  $\GunF$; if $\Gun$  generates $\bG$, they
coincide with $\bG^F$-class functions on $\GunF$.

This  setting we adopt  here is also  taken up by  Lusztig in his series of
papers  on disconnected groups \cite{Lu} and  is slightly more general than
the  setting  of  \cite{grnc},  where  we  assumed that  $\Gun$ contains an
$F$-stable  quasi-central element. A detailed comparison of both situations
is done in the next section.

As  the title says, this  paper is a series  of complements to our original
paper \cite{grnc} which are mostly straightforward developments
that various people  asked us about and, except when mentioned otherwise
(see the introduction to sections \ref{counting} and \ref{shintani descent})
as far as we
know  have not appeared in the  literature; we thank in particular Olivier
Brunat,  Gerhard Hiss,  Cheryl Praeger  and Karine  Sorlin for asking these
questions.

In  section \ref{prel} we show how quite a few results of \cite{grnc}
are  still valid in  our more general  setting.

In section \ref{scalproduct} we take a ``global'' viewpoint to give
a formula for the scalar product of two Deligne-Lusztig characters on the
whole of $\bG^F$.

In section \ref{counting}  we show how  to extend to  disconnected groups the
formula  of Steinberg  \cite[15.1]{St} counting  unipotent elements. 

In section \ref{tensoring} we  extend  the  theorem  that tensoring Lusztig 
induction with the Steinberg character gives ordinary induction.

In  section \ref{characteristic} we  give a formula  for the characteristic
function  of a  \qss\ class, extending the  case of  a quasi-central class
which was treated in \cite{grnc}.

In section \ref{qss classes} we show how to classify \qss\ conjugacy
classes, first for a (possibly disconnected) reductive group over an
arbitrary  algebraically closed field, and then over $\Fq$.

Finally, in section \ref{shintani descent} we  extend to our setting
previous results on Shintani descent.

We thank Gunter Malle for a careful reading of the manuscript.
\section{Preliminaries}\label{prel}

In this paper, we consider a 
(possibly disconnected) algebraic  group $\bG$ over $\Fqbar$
(excepted at the beginning of section \ref{qss classes} where we accept
an arbitrary algebraically closed field),
defined  over  $\Fq$  with  corresponding  Frobenius  endomorphism  $F$. If
$\bG^1$  is an $F$-stable component of  $\bG$, we call  class functions on
$\GunF$ the complex-valued functions invariant under
$\GoF$-conjugacy  (or equivalently under  $\GunF$-conjugacy). Note that
if  $\bG^1$ does not generate $\bG$,  there may be less functions invariant
by  $\bG^F$-conjugacy than by $\GunF$-conjugacy; but the propositions we
prove  will apply {\em in particular} to the $\bG^F$-invariant functions so
we  do  not  lose  any  generality.  The  class functions on $\GunF$ are
provided  with  the  scalar product  $\scal  fg{\GunF}=|\GunF|
\inv\sum_{h\in \GunF}f(h)\overline{  g(h)}$.  We  call  $\bG$  reductive 
when $\bG^0$ is reductive.

When $\bG$ is reductive, following \cite{St} we call \qss\ an element which
normalizes  a pair $\To\subset\Bo$ of a maximal  torus of $\Go$ and a Borel
subgroup  of $\Go$.  Following \cite[1.15]{grnc},  we call  quasi-central a
\qss\   element  $\sigma$  which  has   maximal  dimension  of  centralizer
$C_\Go(\sigma)$  (that we will also  denote by $\bG^{0\,\sigma}$) amongst all \qss\
elements of $\Go\cdot\sigma$.

In the sequel, we fix a reductive group $\bG$
and (excepted in the next section where we take a ``global'' viewpoint)
an $F$-stable connected component $\Gun$ of $\bG$.
In  most of \cite{grnc} we  assumed that 
$\GunF$ contained a quasi-central element. Here we do not assume this.
Note  however that by \cite[1.34]{grnc} $\Gun$ contains an element $\sigma$
which  induces an $F$-stable  quasi-central automorphism of  $\Go$. Such an
element will be enough for our purpose, and we fix one from now on.

By   \cite[1.35]{grnc}   when   $H^1(F,Z\Go)=1$   then   $\GunF$   contains
quasi-central  elements. Here is an example  where $\GunF$ does not contain
quasi-central elements.
\begin{example}\label{not Fstable qc}
Take $s=\begin{pmatrix}\xi&0\\0&1\\  \end{pmatrix}$  
where   $\xi$ is a generator of $\Fq^\times$, take $\Go=\SL_2$ and let
$\bG=\genby{\Go,s}\subset  \GL_2$ endowed with the standard Frobenius
endomorphism  on $\GL_2$, so that $s$ is $F$-stable and $\bG^F=\GL_2(\Fq)$.
We  take  $\Gun=\Go\cdot  s$.  Here  quasi-central elements are central
and coincide with $\Go\cdot s\cap Z\bG$ which is nonempty since if
$\eta\in\BF_{q^2}$ is a square root of $\xi$ then
$\begin{pmatrix}\eta&0\\0&\eta\\  \end{pmatrix}\in \Go\cdot s\cap Z\bG$; but
$\Go\cdot s$ does not meet $(Z\bG)^F$.\hfill$\square$
\end{example}

In the above example $\Gun/\Go$ is a semi-simple element of $\bG/\Go$.
No  such example exists  when $\Gun/\Go$ is  unipotent: 
\begin{lemma}\label{qc rationnel}
Let  $\Gun$  be  an  $F$-stable  connected  component  of  $\bG$  such that
$\Gun/\Go$  is  a  unipotent  element  of  $\bG/\Go$. Then $\GunF$ contains
unipotent quasi-central elements.
\end{lemma}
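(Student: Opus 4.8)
The plan is to start from the quasi-central element $\sigma\in\Gun$ inducing an $F$-stable quasi-central automorphism of $\Go$, which exists by \cite[1.34]{grnc} and has been fixed above, and then to correct it inside its coset by an element of $\Go$ so as to make it both $F$-stable and unipotent. First I would record that, since $\Gun/\Go$ is unipotent in $\bG/\Go$, the automorphism of $\Go$ induced by $\sigma$ has finite order equal to a power of $p$ modulo inner automorphisms; after replacing $\sigma$ by a suitable power times an inner correction one may moreover arrange the induced automorphism itself to be unipotent (of $p$-power order), so the semisimple part of $\sigma$ lies in $\Go$. The point of quasi-centrality is that $C_\Go(\sigma)$ is connected and reductive (this is Steinberg's theorem, used throughout \cite{grnc}), and that the set of quasi-central elements in $\Go\cdot\sigma$ is a single $C_\Go(\sigma)$-orbit under conjugation together with multiplication by $Z(\Go\cdot\sigma)$-type corrections; I would extract from \cite{grnc} the precise statement that the quasi-central elements of $\Gun$ form one geometric conjugacy class under $\Go$, translated by central elements.

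Next I would carry out the rationality step. Write the Jordan decomposition $\sigma=su$ with $s$ semisimple, $u$ unipotent, $su=us$; both $s$ and $u$ are then $F$-stable because $\sigma$ is. Since the image of $\sigma$ in $\bG/\Go$ is unipotent and $\bG/\Go$ is a finite (hence, its unipotent elements form a $p$-group) or at worst an affine algebraic group with unipotent part, the image of $s$ in $\bG/\Go$ is trivial, i.e.\ $s\in\Go$. Thus $\sigma=su$ with $s\in\Go$ and $u$ lying in $\Gun$ and unipotent; moreover $u=s\inv\sigma$ normalizes the pair $\bT^0\subset\bB^0$ that $\sigma$ normalizes, modified by $s$ — more precisely $u$ normalizes the pair ${}^{s\inv}\bT^0\subset{}^{s\inv}\bB^0$, so $u$ is again quasi-semi-simple, and being unipotent one checks it is quasi-central (it centralizes at least as much as $\sigma$ does, since $C_\Go(\sigma)=C_\Go(s)\cap C_\Go(u)\subseteq C_\Go(u)$, and $u\in\Go\cdot\sigma$ because $s\in\Go$, so maximality of $\dim C_\Go(\sigma)$ forces $\dim C_\Go(u)=\dim C_\Go(\sigma)$, whence $u$ is quasi-central). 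Finally $u$ is $F$-stable: $F(u)$ is the unipotent part of $F(\sigma)$, and $F(\sigma)$ lies in $\Go\cdot\sigma$ with the same semisimple-part coset, but since the induced automorphism is $F$-stable we get $F(\sigma)=z\sigma$ for some $z\in\Go$ with $z$ central-ish; taking unipotent parts gives $F(u)$ conjugate to $u$ by an element of $C_\Go(s)$, and one pins down $F(u)=u$ by a cohomological argument. Concretely, the set $X=\{g\in C_\Go(s):{}^{g}u\text{ is }F\text{-stable and quasi-central}\}$ is a coset of $C_\Go(s)^{0}\cap C_\Go(u)$-type subgroup; Lang's theorem applied inside the connected reductive group $C_\Go(s)^0$ — whose $H^1(F,\cdot)$ in the relevant quotient vanishes because we are now in the unipotent situation — produces an $F$-fixed point, giving the desired $F$-stable unipotent quasi-central element in $\GunF$.

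The main obstacle I expect is the last rationality step: controlling $H^1(F,-)$ so that the correction of $u$ (or of $\sigma$) to an $F$-stable representative can actually be made inside $\Go^F$ rather than only in $\Go$. Example \ref{not Fstable qc} shows this can fail in general, and the reason it cannot fail here must be that the obstruction class lives in $H^1$ of a group that is \emph{connected} (a torus or a connected reductive group) once one is in the unipotent-coset situation — as opposed to the disconnected or semisimple-coset situation where $\bG/\Go$ contributes a nontrivial component group. I would make this precise by identifying the relevant cohomology group as $H^1(F,(Z\Go)/(Z\Go)\cap[\text{something}])$ or as $H^1$ of $C_\Go(s)^0$, and invoking Lang–Steinberg ($H^1(F,H)=1$ for $H$ connected). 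A secondary, more bookkeeping obstacle is checking that quasi-centrality is preserved under the $\Go$-conjugation and central-translation corrections used; but that follows formally from the orbit description of quasi-central elements in \cite[\S1]{grnc}, which I am entitled to assume.
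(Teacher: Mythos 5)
Your second paragraph contains the right first ingredient, which is also the paper's: since $\Gun/\Go$ is unipotent, the semi-simple part $s$ of a \qss\ element of $\Gun$ lies in $\Go$, so its unipotent part $u$ still lies in $\Gun$, normalizes the same pair $\To\subset\Bo$ (both Jordan components normalize any closed subgroup that the element normalizes -- no conjugation of the pair by $s\inv$ is needed), hence is \qss\ and therefore quasi-central by \cite[1.33]{grnc}. But the heart of the lemma is rationality, and there your argument has a genuine gap. You first assert that $s$ and $u$ are ``$F$-stable because $\sigma$ is'', which contradicts the setting: $\sigma$ is only $F$-stable as an automorphism (this is exactly the issue illustrated by Example \ref{not Fstable qc}). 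You then backtrack and propose to repair $u$ by conjugating inside $C_\Go(s)$ and invoking vanishing of an unidentified $H^1$ ``because we are in the unipotent situation'', but this step is never carried out, and the groups you name do not support it: $C_\Go(\sigma)$ and $C_\Go(s)$ need not be connected even for $\sigma$ quasi-central (your appeal to ``Steinberg's theorem'' for connectedness is false; see the paper's $\GL_n$ example, $n$ odd, $\mathrm{char}\,k\neq2$, where $A(\sigma)$ has order $2$), so Lang--Steinberg does not apply to them directly. Moreover the paper's second example ($\SL_5$ in characteristic $2$) shows that even in a unipotent coset one cannot in general make a \emph{given} quasi-central automorphism rational, so any argument that implicitly tries to preserve the automorphism induced by $\sigma$, or that locates the obstruction in the center alone, is doomed; the rational element one finds may induce a different, inner-twisted, automorphism.

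The gap can be closed in two ways, neither of which is what you wrote. The paper's route makes rationality automatic from the start: choose an $F$-stable pair $\To\subset\Bo$; since any two $F$-stable such pairs are $\GoF$-conjugate and $\GunF\neq\emptyset$ (Lang), the set $N_{\bG^F}(\To\subset\Bo)$ meets $\GunF$; now take the Jordan decomposition $su$ of a \emph{rational} element of $N_{\GunF}(\To\subset\Bo)$ -- then $u$ is automatically $F$-stable, unipotent, still normalizes $\To\subset\Bo$, hence is quasi-central. Alternatively, one could argue that the unipotent quasi-central elements of $\Gun$ form a nonempty $F$-stable single orbit under the \emph{connected} group $\Go$ and use the standard fact that an $F$-stable homogeneous space under a connected group has a rational point (no vanishing of $H^1$ of the stabilizer is required, which is precisely why the disconnectedness of $C_\Go(u)$ is harmless); but that single-orbit statement must then be quoted or proved, whereas your proposal neither proves it nor identifies the cohomological obstruction it gestures at.
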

\begin{proof}
Let  $\To\subset\Bo$ be a pair of an  $F$-stable maximal torus of $\Go$ and
an  $F$-stable  Borel  subgroup  of  $\Go$. Then $N_{\bG^F}(\To\subset\Bo)$
meets   $\GunF$,  since  any  two   $F$-stable  pairs  $\To\subset\Bo$  are
$\GoF$-conjugate.  Let $su$  be the  Jordan decomposition  of an element of
$N_\GunF(\To\subset\Bo)$.   Then  $s\in\Go$   since  $\Gun/\Go$  is
unipotent, and $u$ is $F$-stable, unipotent and still in
$N_\GunF(\To\subset\Bo)$ thus \qss, so is quasi-central by
\cite[1.33]{grnc}.
\end{proof}
Note,  however,  that  there  may  exist  a unipotent quasi-central element
$\sigma$  which is rational  as an automorphism  but such that  there is no
rational element inducing the same automorphism.
\begin{example}
We  give  an  example  in  $\bG=\SL_5\rtimes\genby{\sigma'}$  where
$\Go=\SL_5$  has the standard rational structure  over a finite field $\Fq$
of  characteristic  2  with  $q\equiv  1  \mod  5$  and  $\sigma'$  is  the
automorphism  of $\Go$ given by $g\mapsto J\lexp tg\inv J$ where $J$ is the
antidiagonal matrix with all non-zero entries equal to 1, so that $\sigma'$
stabilizes  the pair  $\To\subset\Bo$ where  $\To$ is  the maximal torus of
diagonal  matrices  and  $\Bo$  the  Borel  subgroup  of  upper  triangular
matrices, hence $\sigma'$ is \qss. Let $t$ be the diagonal
matrix  with entries  $(a,a,a^{-4},a,a)$ where  $a^{q-1}$ is  a non trivial
5-th  root of  unity $\zeta\in\Fq$.  We claim  that $\sigma=t\sigma'$ is as
announced: it is still \qss;
we have  $\sigma^2=t\sigma'(t)=t t\inv=1$  so that  $\sigma$ is
unipotent;  we have  $\lexp F\sigma=\lexp  F t t\inv\sigma=\zeta\sigma$, so
that  $\sigma$ is rational as an  automorphism but not rational. Moreover a
rational  element  inducing  the  same  automorphism  must  be  of the form
$z\sigma$  with $z$ central in $\Go$ and $z\cdot\lexp Fz\inv=\zeta\Id$; but
the  center $Z\Go$ is generated by  $\zeta\Id$ and for any $z=\zeta^k\Id\in
Z\Go$ we have $z\cdot\lexp Fz\inv=\zeta^{k(q-1)}\Id=\Id\ne\zeta\Id$.
\hfill$\square$
\end{example}

As  in \cite{grnc}  we call  ``Levi'' of  $\bG$ a  subgroup $\bL$ of the form
$N_\bG(\bL^0\subset\Po)$   where  $\bL^0$  is  a  Levi  subgroup  of  the
parabolic  subgroup  $\Po$  of  $\Go$.  A  particular case is a ``torus''
$N_\bG(\To,\Bo)$  where  $\To\subset\Bo$  is  a  pair of a maximal
torus  of $\Go$ and a Borel subgroup of $\Go$; note that a ``torus'' meets
all connected components of $\bG$, while (contrary to what is stated
erroneously after \cite[1.4]{grnc}) this may not be the case for
a ``Levi''.

We  call ``Levi''  of $\Gun$  a set  of the form $\Lun=\bL\cap\Gun$ where
$\bL$ is a ``Levi'' of $\bG$ and the intersection is nonempty; note that if
$\Gun$  does not generate $\bG$, there may exist several ``Levis'' of $\bG$
which have same intersection with $\Gun$. Nevertheless $\Lun$ determines
$\Lo$ as the identity component of $\genby\Lun$.

We  assume now that $\Lun$ is an  $F$-stable ``Levi'' of $\Gun$ of the form
$N_\Gun(\bL^0\subset\Po)$.  If $\bU$ is the  unipotent radical of $\Po$, we
define   $\bY_\bU^0=\{x\in\Go\mid  x\inv\cdot\lexp   Fx\in\bU\}$  on  which
$(g,l)\in\bG^F\times\bL^F$  such  that  $gl\in\Go$  acts by $x\mapsto gxl$,
where  $\bL=N_\bG(\Lo,\Po)$. Along  the same  lines as \cite[2.10]{grnc} we
define

\begin{definition}\label{RLG} 
Let $\Lun$ be an $F$-stable ``Levi'' of $\Gun$ of the form
$N_\Gun(\Lo\subset\Po)$ and let $\bU$ be the unipotent radical of $\Po$.
For $\lambda$ a class function on $\LunF$ and
$g\in\GunF$  we set
$$\RLG(\lambda)(g)=|\LunF|\inv\sum_{l\in\LunF}\lambda(l)
\Trace((g,l\inv)\mid H^*_c(\bY_\bU^0))$$
and for $\gamma$ a class function on $\GunF$ and
$l\in\LunF$ we set 
$$\sRLG(\gamma)(l)=|\GunF|\inv\sum_{g\in\GunF}\gamma(g)
\Trace((g\inv,l)\mid H^*_c(\bY_\bU^0)).$$
\end{definition}
In the above $H^*_c$ denotes the $\ell$-adic cohomology with compact support,
where we have chosen once and for all a prime number $\ell\ne p$. In order
to consider the virtual character
$\Trace(x\mid H^*_c(\bX))=\sum_i(-1)^i \Trace(x\mid H^i_c(\bX,\Qlbar))$ as a 
complex character we chose once and for all an embedding
$\Qlbar\hookrightarrow\BC$.

Writing  $\RLG$ and $\sRLG$  is an abuse  of notation: the definition needs
the  choice of a $\Po$ such  that $\Lun=N_\Gun(\Lo\subset\Po)$. 
Our subsequent statements will use an implicit choice. 
Under   certain  assumptions  we  will  prove  a  Mackey  formula  (Theorem
\ref{mackey-levi})  which  when true implies that $\RLG$ and $\sRLG$
are independent of the choice of $\Po$.

By  the  same  arguments  as  for  \cite[2.10]{grnc} (using that $\LunF$ is
nonempty   and  \cite[2.3]{grnc})  definition  \ref{RLG}  agrees  with  the
restriction to $\GunF$ and $\LunF$ of \cite[2.2]{grnc}.

The  two maps  $\RLG$ and  $\sRLG$ are  adjoint with  respect to the scalar
products on $\GunF$ and $\LunF$.

We note the following variation on \cite[2.6]{grnc} where, for
$u$ (resp. $v$) a unipotent element of $\bG$ (resp. $\bL$), we set
$$Q^\Go_\Lo(u,v)=
\begin{cases}
\Trace((u,v)\mid H^*_c(\bY_\bU^0))&\text{if $uv\in\Go$}\\
0&\text{otherwise}\end{cases}.$$
\begin{proposition}\label{char. formula}
Let $su$ be the Jordan decomposition of an element of $\GunF$ and $\lambda$ a
class function on $\LunF$;
\begin{enumerate}
\item if $s$ is central in $\bG$ we have 
$$(R^\Gun_\Lun\lambda)(su)=|\LoF|\inv\sum_{v\in(\Lo\cdot u)^F_\unip}
Q^\Go_\Lo(u,v\inv)\lambda(sv);$$
\item in general
$$(R^\Gun_\Lun\lambda)(su)=
\sum_{\{h\in\GoF\mid \lexp h\bL\owns s\}}
\frac{|\lexp h\Lo\cap\GsoF|}{|\LoF||\GsoF|} 
R_{\lexp h\Lun\cap\Gunsu}^\Gunsu(\lexp h\lambda)(su);
$$
\item if $tv$ is the Jordan decomposition of an element of $\LunF$
and $\gamma$ a class function on $\GunF$, we have
$$(\sRLG\gamma)(tv)=|\GtoF|\inv\sum_{u\in(\Gto\cdot v)^F_\unip}
Q^\Gto_\Lto(u,v\inv)\gamma(tu).$$
\end{enumerate}
\end{proposition}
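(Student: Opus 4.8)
The three formulas are proved along the lines of \cite[2.6]{grnc}, combining the Grothendieck--Lefschetz trace formula with the Deligne--Lusztig trace lemma: if an element $g$ of finite order acts on a quasi-projective variety $\bX$ over $\Fqbar$ and $g=su$ is its Jordan decomposition, then $\Trace(g\mid H^*_c(\bX))=\Trace(u\mid H^*_c(\bX^s))$. I would apply this to the algebraic group $\bH=\{(g,l)\in\bG\times\bL\mid gl\in\Go\}$ acting on $\bY_\bU^0$ by $(g,l)\colon x\mapsto gxl$ (so $\bH$ is a subgroup of $\bG\times\bL^{\mathrm{op}}$), and to the element $(su,l^{-1})\in\bH^F$: writing $l=tv$ for the Jordan decomposition of $l$ in $\LunF$, the Jordan decomposition of $(su,l^{-1})$ in $\bH$ is $(s,t^{-1})(u,v^{-1})$, whose semisimple part $(s,t^{-1})$ acts on $\bY_\bU^0$ with fixed locus $\mathbf{Z}_{s,t}:=\{x\in\bY_\bU^0\mid x^{-1}sx=t\}$, so the trace lemma gives $\Trace((su,l^{-1})\mid H^*_c(\bY_\bU^0))=\Trace((u,v^{-1})\mid H^*_c(\mathbf{Z}_{s,t}))$. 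Inserting this into Definition \ref{RLG} reduces each part to the analysis of the $\mathbf{Z}_{s,t}$.

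For (i), since $s$ is central the equation $x^{-1}sx=t$ forces $t=s$, so $\mathbf{Z}_{s,t}=\varnothing$ unless $t=s$, and $\mathbf{Z}_{s,s}=\bY_\bU^0$. Hence in the sum over $l\in\LunF$ only those $l$ with semisimple part $s$ contribute, i.e.\ $l=sv$ with $v$ running over the $F$-fixed unipotent elements of the relevant component of $\bL$, which is $(C_\bL(s)^0\cdot u)^F_{\unip}$ since $C_\bL(s)^0=\Lo$; each such $l$ contributes $\Trace((u,v^{-1})\mid H^*_c(\bY_\bU^0))=Q^\Go_\Lo(u,v^{-1})$, the condition $uv^{-1}\in\Go$ holding automatically here. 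Together with $|\LunF|=|\LoF|$ this gives (i).

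For (ii) one has $\mathbf{Z}_{s,t}\ne\varnothing$ only if $t$ is $\Go$-conjugate to $s$. The key point is that for $x\in\bY_\bU^0$ the parabolic subgroup $\lexp x\Po$ is $F$-stable, because $x^{-1}\lexp Fx\in\bU\subseteq\Po$; it is therefore $\GoF$-conjugate to $\Po$, so one may write $x=hz$ with $h\in\GoF$ and $z\in\Po$. I would accordingly partition the contributions to Definition \ref{RLG} (after the trace lemma) according to such an $h\in\GoF$; each such $h$ forces $h^{-1}sh\in\bL$, i.e.\ $s\in\lexp h\bL$, and after the substitution $x\mapsto h^{-1}x$ and the standard fibration absorbing the unipotent radical, the piece attached to $h$ becomes the Deligne--Lusztig variety of the reductive (possibly disconnected) group $C_\bG(s)$ on the component $\Gunsu$, relative to the parabolic $\lexp h\Po\cap C_\bG(s)^0$ with Levi $\lexp h\Lo\cap C_\bG(s)^0$ --- here one uses the standard fact that $\lexp h\Po\cap C_\bG(s)^0$ is a parabolic subgroup of $C_\bG(s)^0$ with that Levi complement when $h^{-1}sh$ normalizes the pair $\Lo\subset\Po$, together with a check that $\lexp h\Lun\cap\Gunsu$ is a ``Levi'' of $\Gunsu$. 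Tracking the normalizing constants --- the prefactor $|\LunF|^{-1}=|\LoF|^{-1}$, the multiplicity $|\Po^F|$ with which each $F$-stable parabolic is hit by the $h$'s, the factor coming from the $\bU$-fibration, the prefactor $|\lexp h\Lo\cap\GsoF|^{-1}$ carried by $R_{\lexp h\Lun\cap\Gunsu}^{\Gunsu}$, and the free left-translation action of $\GsoF$ on the remaining data --- produces the coefficient $|\lexp h\Lo\cap\GsoF|\,|\LoF|^{-1}\,|\GsoF|^{-1}$ and reassembles $R_{\lexp h\Lun\cap\Gunsu}^{\Gunsu}(\lexp h\lambda)(su)$, which is (ii). This part is the main obstacle: the Lang--Steinberg reorganization of the sum, the verification that the relevant subvarieties of $\bY_\bU^0$ are genuinely Deligne--Lusztig varieties of $C_\bG(s)$ (in particular that the pair $\lexp h\Lo\cap C_\bG(s)^0\subset\lexp h\Po\cap C_\bG(s)^0$ is of the required type even when $u\notin\Go$), and the precise bookkeeping of the constant. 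It runs parallel to \cite[2.6]{grnc}, the new feature being only that $\Gun$ and the pertinent components of $C_\bG(s)$ and $\bL$ need not contain a quasi-central element.

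For (iii) the same two tools apply to $\sRLG$: for $g=su\in\GunF$ the Jordan decomposition of $(g^{-1},tv)$ in $\bH$ is $(s^{-1},t)(u^{-1},v)$, whose semisimple part has fixed locus $\mathbf{Z}_{s,t}$ on $\bY_\bU^0$, so the trace lemma gives $\Trace((g^{-1},tv)\mid H^*_c(\bY_\bU^0))=\Trace((u^{-1},v)\mid H^*_c(\mathbf{Z}_{s,t}))$, which is non-zero only when $s$ is $\Go$-conjugate to $t$. As in (ii), a Lang--Steinberg reorganization identifies $\mathbf{Z}_{s,t}$ (for the $s$ that occur) with the Deligne--Lusztig variety of $C_\bG(t)$ localized at $t$, Poincar\'e duality of the cohomology being used to pass between the actions of $(u^{-1},v)$ and $(u,v^{-1})$; since $\gamma$ is a class function on $\GunF$ the $\GoF$-conjugates of $t$ that appear contribute equally, and the sum over $g\in\GunF$ collapses to a sum over $u\in(\Gto\cdot v)^F_{\unip}$ with coefficient $|\GtoF|^{-1}$ and Green function $Q^\Gto_\Lto(u,v^{-1})$. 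Alternatively, (iii) can be deduced from (i)--(ii) by the adjunction between $\RLG$ and $\sRLG$.
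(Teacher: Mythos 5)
Your strategy --- the Deligne--Lusztig trace lemma applied to $(su,l\inv)$ acting on $\bY_\bU^0$, analysis of the fixed locus of the semisimple part, then a Lang--Steinberg reorganization --- is not what the paper does: it is an attempt to reprove \cite[2.6]{grnc} from scratch. The paper's proof is a two-line reduction: having checked (just before the proposition) that Definition \ref{RLG} is the restriction to $\GunF$, $\LunF$ of the global definition \cite[2.2]{grnc}, it obtains (i) from \cite[2.6(i)]{grnc} by the arguments of \cite[2.10]{grnc}, and then gets (ii) by plugging (i) back into \cite[2.6(i)]{grnc} (with (iii) handled in the same spirit via the character formula for $\sRLG$). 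Your part (i) is essentially correct and is the standard argument.

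The problem is in your part (ii), exactly at the point you yourself flag as ``the main obstacle'': the reorganization and bookkeeping that produce the index set $\{h\in\GoF\mid\lexp h\bL\owns s\}$ and the coefficient $|\lexp h\Lo\cap\GsoF|/(|\LoF||\GsoF|)$ are asserted, not proved, and the one concrete step you give toward them fails as stated. From $x\in\bY_\bU^0$ with $x\inv sx=t\in\bL$ you write $x=hz$ with $h\in\GoF$, $z\in\Po$ (correctly, since $\lexp x\Po$ is $F$-stable), and then claim that ``each such $h$ forces $h\inv sh\in\bL$''. In fact $h\inv sh=\lexp zt$ lies only in $\lexp z\bL$, and $z\in\Po$ does not normalize $\bL$; to reach $s\in\lexp h\bL$ one must further modify $h$ (conjugating by a non-rational element of the unipotent radical), determine how many rational $h$ yield the same piece of the fixed-point variety, and identify the stabilizers --- this is precisely where the factor $|\lexp h\Lo\cap\GsoF|/(|\LoF||\GsoF|)$ comes from, and it is precisely the content of \cite[2.6(i)]{grnc}, which your argument neither invokes nor replaces. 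Two smaller points: in (iii), passing from $\Trace((u\inv,v))$ to $\Trace((u,v\inv))$ is justified by the rationality/integrality of Lefschetz numbers of finite-order automorphisms rather than by Poincar\'e duality as such; and the alternative of deducing (iii) from (i)--(ii) ``by adjunction'' also requires an actual computation, not just the adjointness statement.
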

In the above we abused  notation to
write $\lexp h\bL\owns s$ for $<\Lun>\owns\lexp{h\inv}s$.
\begin{proof}
(i) results from \cite[2.6(i)]{grnc} using the same arguments as
the proof of \cite[2.10]{grnc};
we then get (ii) by plugging back (i) in \cite[2.6(i)]{grnc}.
\end{proof}

In our setting the Mackey formula \cite[3.1]{grnc} 
is still valid in the cases where we proved it
\cite[Th\'eor\`eme 3.2]{grnc} and \cite[Th\'eor\`eme 4.5]{grnc}.
Before stating it notice that \cite[1.40]{grnc} remains true without assuming
that $\GunF$ contains quasi-central elements, replacing in the
proof  $\GoF.\sigma$ with  $\GunF$, which  shows that  any $F$-stable
``Levi''  of $\Gun$ is  $\GoF$-conjugate  to  a  ``Levi''  containing  $\sigma$. This
explains  why we  only state  the Mackey  formula in  the case of ``Levis''
containing $\sigma$.

\begin{theorem}\label{mackey-levi}

If $\Lun$ and $\bM^1$ are two $F$-stable ``Levis'' of $\Gun$ containing $\sigma$
then under one of the following assumptions:
\begin{itemize}
\item $\bL^0$ (resp. $\bM^0$) is a Levi subgroup of an
$F$-stable parabolic subgroup normalized by $\Lun$ (resp. $\bM^1$). 
\item one of $\Lun$ and $\bM^1$ is a ``torus''
\end{itemize}
we have
$$\sRLG R_\Mun^\Gun
=\sum_{x\in[\Lso^F\backslash\cS_\Gso(\Lso,\Mso)^F/\Mso^F]}\hss
R_{(\Lun\cap\lexp x\Mun)}^\Lun\lexp*R_{(\Lun\cap\lexp x\Mun)}
^{\lexp x\Mun}\ad x$$
where $\cS_\Gso(\Lso,\Mso)$ is the set of elements $x\in\Gso$ such that
$\Lso\cap\lexp x \Mso$ contains a maximal torus of $\Gso$.
\end{theorem}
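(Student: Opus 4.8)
The plan is to adapt the proofs of \cite[Th\'eor\`eme 3.2]{grnc} and \cite[Th\'eor\`eme 4.5]{grnc}, which establish the first, respectively the second, of the two cases under the running hypothesis of \cite{grnc} that $\GunF$ contains a quasi-central element. That hypothesis is used there only through facts that remain available in the present, more general, setting: the character formula \cite[2.6]{grnc}, whose analogue here is Proposition \ref{char. formula}; the reduction \cite[1.40]{grnc}, which as noted just before the theorem still holds once one only considers ``Levis'' containing the fixed element $\sigma$; and the mere existence of $\sigma$, which induces an $F$-stable quasi-central automorphism of $\Go$ even though $\sigma$ itself need not be $F$-stable. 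So the task is to run through those two proofs and check that nothing else is needed.

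A uniform way to organize both cases is as follows. Both sides of the asserted identity are class functions on $\LunF$, linear in $\gamma$, so it suffices to evaluate them at an arbitrary $g\in\LunF$, for arbitrary $\gamma$. Write $g=tv$ for the Jordan decomposition, with $t$ semisimple and $v$ unipotent; note that $t$ is central in every subgroup of $\bG$ that centralizes it, in particular in all the centralizers that will appear below. Applying the character formula (Proposition \ref{char. formula}) repeatedly to the four factors $\sRLG$, $R_\Mun^\Gun$, $R^\Lun_{\Lun\cap\lexp x\Mun}$ and $\lexp*R^{\lexp x\Mun}_{\Lun\cap\lexp x\Mun}$ appearing on the two sides, and using this centrality of $t$, both sides become explicit combinations of Green functions $Q^{\bH}_{\bH'}$ (in the notation introduced just before Proposition \ref{char. formula}) for the connected reductive group $\bH=C_\bG(t)^0$ and the smaller centralizers $C_\bL(t)^0$, $C_{\lexp x\bM}(t)^0,\dots$ that it produces, weighted by values $\gamma(tw)$ with $w$ unipotent. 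What remains is then the Mackey formula for Green functions of the connected group $C_\bG(t)^0$ in whichever of the two situations we are in --- classical in each case (Deligne--Lusztig for the ``torus'' case, and a consequence of the ordinary Mackey formula together with the Bruhat decomposition in the $F$-stable parabolic case) --- together with a bijective matching of index sets.

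Equivalently, one can argue geometrically, which is what \cite{grnc} does and which uses the same inputs. In the first case the parabolics may be taken $F$-stable, so that $R_\Lun^\Gun$ and $R_\Mun^\Gun$ become ordinary (Harish-Chandra) induction; the formula then follows from the ordinary Mackey formula for induction and restriction of representations together with the Bruhat decomposition of $\Go$, the disconnectedness of $\bG$ entering only through the record of which connected component of $\bG$ each double-coset representative lies in. In the second case (one of $\Lun$, $\bM^1$ a ``torus'') one runs the Deligne--Lusztig computation of \cite[4.5]{grnc}: one analyzes the $\ell$-adic cohomology of the product $\bY_\bU^0\times\bY_{\bU'}^0$ of the relevant Deligne--Lusztig varieties, with its natural equivariant structure, by stratifying along the Bruhat decomposition of $\Go$, each stratum being tractable because one of the two Levis is a ``torus''. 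In both cases the geometric input is insensitive to whether $\sigma$ is rational; only the $F$-conjugacy bookkeeping changes.

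As is usual for Mackey-type statements, I expect the main obstacle to be combinatorial rather than conceptual: one must match the double-coset set $[\Lso^F\backslash\cS_\Gso(\Lso,\Mso)^F/\Mso^F]$ and the ``Levis'' $\Lun\cap\lexp x\Mun$, $\lexp x\Mun$ on the right-hand side with the terms produced by the character formula (or by the Bruhat stratification), and one must check independence of the implicit choices of the parabolic subgroups $\Po$ --- which, by the remark after Definition \ref{RLG}, is itself a byproduct of the formula once proved. A further technical point, absent from \cite{grnc}, is that $\sigma$ need not be $F$-stable: wherever \cite{grnc} used $F$-equivariance of a conjugation putting a ``Levi'' into standard position, one must instead invoke the variant of \cite[1.40]{grnc} recorded before the theorem and carry the resulting cocycle through the argument. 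Since the statement concerns only ``Levis'' that already contain $\sigma$, I anticipate no genuine difficulty there.
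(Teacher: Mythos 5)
Your overall plan is the paper's own: the proof there consists precisely in re-running \cite[Th\'eor\`eme 3.2]{grnc} for the parabolic case and section 4 of \cite{grnc} for the ``torus'' case, after the reduction (via the variant of \cite[1.40]{grnc} recorded before the theorem) to ``Levis'' containing $\sigma$, and your geometric description of the two cases matches this. The one place where the adaptation is not pure bookkeeping, and which your sketch leaves under the heading ``combinatorial matching'', is in the first case: the bimodule computed is now $\Qlbar[(\bU^F\backslash\GoF/\bV^F)^\sigma]$ rather than $\Qlbar[(\bU^F\backslash\GoF\cdot\genby\sigma/\bV^F)^\sigma]$, and \cite[Lemma 3.5]{grnc} has to be replaced by an isomorphism
$\LoF/(\bL^0\cap\lexp x\bV^F)\times_{(\bL^0\cap\lexp x\bM^0)^F}
(\lexp x\bM^0\cap\bU^F)\backslash\lexp x(\bM^0)^F\simeq
\bU^F\backslash(\Po)^F x(\bQ^0)^F/\bV^F$
together with the verification that it is equivariant for $\LunF\times((\bM^1)^F)\inv$; the isomorphism itself only involves connected groups (\cite[5.7]{DM2}) and the equivariance is routine, but this is the concrete lemma your argument must supply. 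In the second case the paper's point is exactly the one you make: \cite[Lemmas 4.1--4.4]{grnc} use only the action of $\sigma$, not $\sigma$ as a rational element, so nothing needs to change.

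By contrast, your ``uniform organization'' via the character formula is not a valid substitute and should be dropped or reworked. Applying Proposition \ref{char. formula} to the four functors does not reduce both sides to ordinary Green functions of the connected group $C_\bG(t)^0$: what appears are the twisted functions $Q^\Gto_\Lto(u,v)$ attached to the coset situation (traces on $H^*_c(\bY_\bU^0)$ with $u$ running over unipotent elements of a coset of $\Gto$), possibly after a further passage through Lusztig induction for the disconnected group $C_\bG(t)^0\cdot tv$ when $C_\bG(t)$ is not connected. A Mackey formula for these twisted Green functions is essentially the theorem itself evaluated on elements with semisimple part $t$, so invoking the ``classical'' connected-group Mackey formulas at that point is circular; the non-circular content is exactly the geometric argument (bimodule decomposition in the parabolic case, Bruhat stratification of the product of varieties in the torus case) that you describe as the alternative and that the paper actually carries out.
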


\begin{proof}
We first prove the theorem in the case of $F$-stable parabolic subgroups
$\Po=\bL^0\ltimes\bU$ and $\bQ^0=\bM^0\ltimes\bV$ following the proof
of \cite[3.2]{grnc}.
The difference is that the variety we consider
here is the intersection with $\Go$ of the variety considered in {\it loc.\
cit.}.
Here, the left-hand side of the Mackey formula is given by
$\Qlbar[(\bU^F\backslash \GoF/\bV^F)^\sigma]$ instead of $\Qlbar[(\bU^F\backslash
\GoF.\genby\sigma/\bV^F)^\sigma]$. Nevertheless we can use
\cite[Lemma 3.3]{grnc} which remains valid with the same proof. As for
\cite[Lemma 3.5]{grnc}, we have to replace it with
\begin{lemma}
For any $x\in\cS_\Gso(\Lso,\Mso)^F$ the map
$$(l(\bL^0\cap\lexp x\bV^F),(\lexp x\bM^0\cap\bU^F)\cdot\lexp xm)\mapsto
\bU^Flxm\bV^F$$ is an isomorphism from
$\LoF/(\bL^0\cap\lexp x\bV^F)\times_{(\bL^0\cap\lexp x\bM^0)^F}
(\lexp x\bM^0\cap\bU^F)\backslash\lexp x(\bM^0)^F$ to $
\bU^F\backslash(\Po)^F x(\bQ^0)^F/\bV^F$ which is compatible with the
action of $\LunF\times((\bM^1)^F)\inv$ where the action of
$(\lambda,\mu\inv)\in\LunF\times((\bM^1)^F)\inv$ maps 
$(l(\bL^0\cap\lexp x\bV^F),(\lexp x\bM^0\cap\bU^F)\cdot\lexp xm)$ to
the class of $(\lambda l\nu\inv(\bL^0\cap\lexp x\bV^F),(\lexp
x\bM^0\cap\bU^F)\cdot\nu\lexp xm\mu\inv)$ with $\nu\in \LunF\cap\lexp
x(\bM^1)^F$ (independent of $\nu$).
\end{lemma}
\begin{proof}
The isomorphism of the lemma involves only connected groups and is a known
result (see \emph{e.g.} \cite[5.7]{DM2}). The compatibility with the actions is
straightforward.
\end{proof}
This allows us to complete the proof in the first case.

We now prove the second case following section 4 of \cite{grnc}.
We first notice that the statement and proof of 
Lemma 4.1 in  \cite{grnc} don't use the element $\sigma$ but only its action.
In Lemma 4.2, 4.3 and 4.4 there is no $\sigma$ involved but only the action of the
groups $\bL^F$ and $\bM^F$ on the pieces of a variety depending only on $\bL$,
$\bM$ and the associated parabolics.
This gives the second case.
\end{proof}

We  now  rephrase  \cite[4.8]{grnc}  and  \cite[4.11]{grnc} in our setting,
specializing  the  Mackey  formula  to  the  case  of two ``tori''. 
Let $\cTun$ be the set of ``tori''  of  $\Gun$;   if
$\Tun=N_\Gun(\To,\Bo)\in\cTunF$  then $\To$ is  $F$-stable. 
We define $\Irr(\TunF)$
as the set of restrictions to $\TunF$ of extensions to $\genby\TunF$
of elements of $\Irr(\ToF)$.

\begin{proposition}\label{mackey-tori}
If $\Tun,\Tpun\in\cTunF$ and
$\theta\in\Irr(\TunF),\theta'\in\Irr((\Tpun)^F)$ then
$$\scal{R^\Gun_\Tun(\theta)}{R^\Gun_\Tpun(\theta')}\GunF=0
\text{ unless $(\Tun,\theta)$ and $(\Tpun,\theta')$ are
$\GoF$-conjugate}.$$
And
\begin{itemize}
\item[(i)]
If for some $n\in N_\GoF(\Tun)$ and $\zeta\ne 1$
we have $\lexp n\theta=\zeta\theta$ then
$R^\Gun_\Tun(\theta)=0$.
\item[(ii)]
Otherwise
$\scal{R^\Gun_\Tun(\theta)}{R^\Gun_\Tun(\theta)}\GunF=
|\{n\in N_\GoF(\Tun)\mid \lexp n\theta=\theta\}|/|\TunF|$.
\end{itemize}
If $\Tun=\Tpun$ the above can be written
$$\scal{R^\Gun_\Tun(\theta)}{R^\Gun_\Tun(\theta')}\GunF=
\scal{\Ind_\TunF^{N_\Gun(\To)^F}\theta}{\Ind_\TunF^{N_\Gun(\To)^F}\theta'}
{N_\Gun(\To)^F}$$
where when $A^1\subset B^1$ are cosets of finite groups $A^0\subset B^0$ 
and $\chi$ is a $A^0$-class function on $A^1$ for $x\in B^1$ we
set $\Ind_{A^1}^{B^1}\chi(x)=|A^0|\inv\sum_{\{y\in B^0\mid \lexp yx\in A^1\}}
\chi(\lexp yx)$.
\end{proposition}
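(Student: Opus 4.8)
The plan is to derive all four assertions from the Mackey formula of Theorem~\ref{mackey-levi} applied to the two ``tori'' $\Tun$ and $\Tpun$, by the passage from Mackey's formula to orthogonality relations that is classical in the connected case and was carried out for \qss\ $\sigma$ in \cite[4.8]{grnc} and \cite[4.11]{grnc}. First one puts the tori in convenient position. Since $\RLG$ and $\sRLG$ are built from $\ell$-adic cohomology of a variety carrying a $\GoF$-action, each $R^\Gun_\Tun(\theta)$ is a $\GoF$-class function on $\GunF$ and $\lexp g R^\Gun_\Tun(\theta)=R^\Gun_{\lexp g\Tun}(\lexp g\theta)$ for $g\in\GoF$; hence replacing $(\Tun,\theta)$ and $(\Tpun,\theta')$ by $\GoF$-conjugates leaves all four statements unchanged, and using the analogue of \cite[1.40]{grnc} recalled just before Theorem~\ref{mackey-levi} I may assume that both $\Tun$ and $\Tpun$ contain $\sigma$, so that Theorem~\ref{mackey-levi} applies. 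By adjunction of $R^\Gun_\Tpun$ and $\lexp *R^\Gun_{\Tpun}$,
$$\scal{R^\Gun_\Tun(\theta)}{R^\Gun_\Tpun(\theta')}{\GunF}
=\scal{\lexp *R^\Gun_{\Tpun}R^\Gun_\Tun(\theta)}{\theta'}{(\Tpun)^F}.$$

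Now feed in Theorem~\ref{mackey-levi}, which writes $\lexp *R^\Gun_{\Tpun}R^\Gun_\Tun$ as a sum over $x\in[(\bT^{\prime\sigma0})^F\backslash\cS_\Gso(\bT^{\prime\sigma0},\Tsigo)^F/\Tsigo^F]$ of terms $R^\Tpun_{\Tpun\cap\lexp x\Tun}\lexp *R^{\lexp x\Tun}_{\Tpun\cap\lexp x\Tun}\ad x$, and simplify. For $x\in\cS_\Gso$ the intersection $\Tpun\cap\lexp x\Tun$ is, as in the connected case, a ``Levi'' both of $\Tpun$ and of $\lexp x\Tun$; but a torus has no proper parabolic subgroup, so the only ``Levi'' of $\Tpun$ is $\Tpun$, forcing $\Tpun\cap\lexp x\Tun=\Tpun=\lexp x\Tun$, and — Lusztig induction and restriction from a ``torus'' to itself being the identity — each term of the sum reduces to $\ad x$. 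Moreover $\Tsigo$ and $\bT^{\prime\sigma0}$ are \emph{maximal} tori of $\Gso$ (\cite{St}), so $\cS_\Gso(\bT^{\prime\sigma0},\Tsigo)$ is simply the set of $x\in\Gso$ transporting $\Tsigo$ onto $\bT^{\prime\sigma0}$. Thus the sum runs effectively over those $x\in\Gso^F\subseteq\GoF$ with $\lexp x\Tun=\Tpun$ (modulo the torus actions), and
$$\scal{R^\Gun_\Tun(\theta)}{R^\Gun_\Tpun(\theta')}{\GunF}=\sum_{x}\scal{\lexp x\theta}{\theta'}{(\Tpun)^F}.$$
From this the orthogonality statement follows as in \cite[4.8]{grnc}: each summand vanishes unless $\scal{\lexp x\theta}{\theta'}{(\Tpun)^F}\ne0$, and a short argument with the roots of unity that then appear — the same one underlying~(i) — shows that a nonzero total forces $(\Tun,\theta)$ and $(\Tpun,\theta')$ to be $\GoF$-conjugate.

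Part~(i) I would prove directly, without invoking the Mackey formula: if $n\in N_\GoF(\Tun)$ and $\zeta\ne1$ satisfy $\lexp n\theta=\zeta\theta$, then, as $n$ normalizes $\Tun$, $\lexp n R^\Gun_\Tun(\theta)=R^\Gun_{\lexp n\Tun}(\lexp n\theta)=R^\Gun_\Tun(\zeta\theta)=\zeta\,R^\Gun_\Tun(\theta)$; but $R^\Gun_\Tun(\theta)$ is a $\GoF$-class function, so $\lexp n R^\Gun_\Tun(\theta)=R^\Gun_\Tun(\theta)$, whence $R^\Gun_\Tun(\theta)=0$. When the hypothesis of~(i) fails, I specialise the displayed formula to $\Tpun=\Tun$: the sum becomes $\sum_{x\in[\Tsigo^F\backslash N_\Gso(\Tsigo)^F/\Tsigo^F]}\scal{\lexp x\theta}{\theta'}{\TunF}$ with no cancellation; translating $N_\Gso(\Tsigo)^F/\Tsigo^F$ into $N_\GoF(\Tun)/\TunF$ gives~(ii) for $\theta'=\theta$, and unwinding the definition of the coset-induction $\Ind_\TunF^{N_\Gun(\To)^F}$ and the corresponding Mackey formula identifies the scalar product for general $\theta'$ with $\scal{\Ind_\TunF^{N_\Gun(\To)^F}\theta}{\Ind_\TunF^{N_\Gun(\To)^F}\theta'}{N_\Gun(\To)^F}$, as in \cite[4.11]{grnc}.

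The step I expect to be the main obstacle is the bookkeeping in the second paragraph: checking that $\Tpun\cap\lexp x\Tun$ is a genuine ``Levi'' of each side for $x\in\cS_\Gso$, so that the reduction of the pieces to $\ad x$ is legitimate, and then matching the ``Weyl-group'' data living on $\Gso$ — the maximal torus $\Tsigo$, its normalizer $N_\Gso(\Tsigo)$ and the transporter $\cS_\Gso$ — with the data living on $\Gun$ — $\To$, $N_\Gun(\To)^F$ and $N_\GoF(\Tun)$ — all while keeping track of the Borel subgroup implicit in the notion of a ``torus'' of $\Gun$. This is exactly where the structure theory of \qss\ automorphisms enters (the correspondence $\bT\mapsto(\bT^\sigma)^0$ between $\sigma$-stable maximal tori of $\Go$ and maximal tori of $\Gso$, and the behaviour of the Weyl group under $\sigma$), and it is the only place at which the argument departs, cosmetically, from \cite[4.8]{grnc} and \cite[4.11]{grnc}.
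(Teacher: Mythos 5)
Your proposal is correct and follows essentially the same route as the paper: reduce to ``tori'' containing $\sigma$, specialize the Mackey formula of Theorem~\ref{mackey-levi} (where $\cS_\Gso$ collapses to the transporter of maximal tori of $\Gso$, hence to $N_\Gso(\Tsigo)$ when the tori coincide), run the root-of-unity argument for vanishing, and translate the $\Gso$-data back to $N_\GoF(\Tun)$ and $\TunF$ via \cite[1.39]{grnc} --- precisely the bookkeeping you flag as the main obstacle, which the paper settles by that citation, and then check the $\Ind$ identity by direct computation. The only local difference is your direct proof of (i) by equivariance ($\lexp nR^\Gun_\Tun(\theta)=R^\Gun_\Tun(\lexp n\theta)=\zeta R^\Gun_\Tun(\theta)$ while the left-hand side equals $R^\Gun_\Tun(\theta)$ since it is a $\GoF$-class function), a valid shortcut where the paper instead reads (i) off the vanishing of the Mackey sum.
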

\begin{proof}
As noticed above Theorem \ref{mackey-levi} we may assume that $\Tun$ and
$\Tpun$ contain $\sigma$. By \cite[1.39]{grnc}, if $\Tun$ and $\Tpun$ contain
$\sigma$, they are 
$\GoF$ conjugate if and only if they are conjugate under $\Gso^F$.
The Mackey formula shows then that the scalar product vanishes when $\Tun$
and $\Tpun$ are not $\GoF$-conjugate.

Otherwise we may assume $\Tun=\Tpun$ and the Mackey formula gives
$$\scal{R^\Gun_\Tun(\theta)}{R^\Gun_\Tun(\theta)}\GunF=|(\Tsigo)^F|\inv
\sum_{n\in   N_\Gso(\Tsigo)^F}\scal\theta{\lexp  n\theta}\TunF.$$  The
term    $\scal\theta{\lexp   n\theta}\TunF$   is   $0$   unless   $\lexp
n\theta=\zeta_n  \theta$ for some constant $\zeta_n$  and in this last case
$\scal\theta{\lexp     n\theta}\TunF=\overline\zeta_n$.     If    $\lexp
{n'}\theta=\zeta_{n'}    \theta$   then   $\lexp{nn'}\theta=\zeta_{n'}\lexp
n\theta=\zeta_{n'}\zeta_n\theta$  thus the  $\zeta_n$ form  a group;  if
this group is not trivial, that is some $\zeta_n$ is not equal to $1$, we have
$\scal{R^\Gun_\Tun(\theta)}{R^\Gun_\Tun(\theta)}\GunF=0$ which implies
that in this case $R^\Gun_\Tun(\theta)=0$. This gives (i)
since by \cite[1.39]{grnc}, if $\Tun\owns\sigma$
then $N_\GoF(\Tun)=N_\Gso(\Tsigo)^F\cdot\ToF$,
so that if there exists $n$ as in (i) there exists an $n\in
N_\Gso(\Tsigo)^F$ with same action on $\theta$ since $\ToF$ has trivial
action on $\theta$. 

In case (ii), for each non-zero term we have $\lexp n\theta=\theta$ and
we have to check that the value
$|((\bT^\sigma)^0)^F|\inv|\{n\in N_\Gso(\Tsigo)^F\mid \lexp
n\theta=\theta\}|$ given by the Mackey formula is equal to the stated value.
This results again from \cite[1.39]{grnc} written 
$N_\GoF(\Tun)=N_\Gso(\Tun)^F\cdot\ToF$, and 
from $N_\Gso(\Tun)^F\cap\ToF=((\bT^\sigma)^0)^F$.

We now prove the final remark. By definition we have
\begin{multline*}
\scal{\Ind_\TunF^{N_\Gun(\To)^F}\theta}{\Ind_\TunF^{N_\Gun(\To)^F}\theta'}
{N_\Gun(\To)^F}=\\
|N_\Gun(\To)^F|\inv|\TunF|^{-2}\sum_{x\in N_\Gun(\To)^F}
\sum_{\{n,n'\in N_\Gun(\To)^F \mid\lexp nx, \lexp {n'}x\in\Tun\}}
\theta(\lexp nx)\overline{\theta(\lexp{n'}x}).
\end{multline*}
Doing the summation over $t=\lexp nx$ and $n''=n'n\inv \in N_\Go(\To)^F$
we get
$$|N_\Gun(\To)^F|\inv|\TunF|^{-2}\sum_{t\in \TunF}
\sum_{n\in N_\Gun(\To)^F}\sum_{\{n''\in N_\Go(\To)^F\mid\lexp{n''}t\in\Tun\}}
\theta(t)\overline{\theta(\lexp{n''}t}).
$$
The conditions $n''\in N_\Go(\To)^F$ together with $\lexp{n''}t\in\Tun$ are equivalent to 
$n''\in N_\Go(\Tun)^F$, so that we get
$|\TunF|\inv\sum_{n''\in N_\Go(\Tun)^F}
\scal\theta{\lexp{n''}\theta}\TunF$. As explained in the first part of
the proof, the scalar product 
$\scal\theta{\lexp{n''}\theta}\TunF$ is zero unless $\lexp{n''}\theta=
\zeta_{n''}\theta$ for some root of unity $\zeta_{n''}$ and arguing as
in the first part of the proof we find that the above sum is zero if there
exists $n''$ such that $\zeta_{n''}\neq 1$ and is equal to
$|\TunF|\inv |\{n\in N_\GoF(\Tun)\mid \lexp n\theta=\theta\}|$
otherwise.
\end{proof}
\begin{remark} In the context of Proposition \ref{mackey-tori}, if $\sigma$
is  $F$-stable  then  we  may  apply  $\theta$  to  it  and  for  any $n\in
N_\Gso(\Tsigo)^F$ we have $\theta(\lexp n\sigma)=\theta(\sigma)$ so for any
$n\in N_\GoF(\Tun)$  and $\zeta$ such  that $\lexp n\theta=\zeta\theta$  we have $\zeta=1$.
When  $H^1(F,Z\Go)=1$  we may choose $\sigma$ to be $F$-stable, 
so that $\zeta\ne 1$ never happens.

Here  is an example where  $\zeta_n=-1$, thus $R^\Gun_\Tun(\theta)=0$: we
take again the context of Example \ref{not Fstable qc}
and take $\To=\left\{\begin{pmatrix}a&0\\
0&a\inv\\ \end{pmatrix}\right\} $ and let $\Tun=\To\cdot s$; let us define $\theta$
on  $ts\in\TunF$  by  $\theta(t  s)=-\lambda(t)$  where  $\lambda$  is the
non-trivial  order 2 character of  $\ToF$ (Legendre symbol); then for
any $n\in N_\GoF(\Tun)\backslash \To$ we have $\lexp n\theta=-\theta$.
\hfill$\square$\end{remark}
We define {\em uniform} functions as the class functions on $\GunF$ which are
linear combinations of the $R_\Tun^\Gun(\theta)$ for $\theta\in\Irr(\TunF)$.
Proposition \cite[4.11]{grnc} extends as follows to our context:
\begin{corollary}[of \ref{mackey-tori}]\label{(3)}
Let  $p^\Gun$ be  the projector  to uniform  functions on $\GunF$.
We have
$$p^\Gun=|\GunF|\inv\sum_{\Tun\in\cTunF}
|\TunF|R^\Gun_\Tun\circ\lexp*R^\Gun_\Tun.$$
\end{corollary}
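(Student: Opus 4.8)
The statement is the standard "projector to uniform functions" formula, and the natural route is to expand the claimed operator $P:=|\GunF|\inv\sum_{\Tun\in\cTunF}|\TunF|R^\Gun_\Tun\circ\lexp*R^\Gun_\Tun$ against the basis of Deligne–Lusztig characters using Proposition \ref{mackey-tori}, and show $P(R^\Gun_{\Tpun}(\theta'))=R^\Gun_{\Tpun}(\theta')$ for every pair $(\Tpun,\theta')$; since the $R^\Gun_\Tun(\theta)$ span the uniform functions this gives that $P$ is the identity on uniform functions, and one separately checks $P$ vanishes on the orthogonal complement, i.e. on anything orthogonal to all $R^\Gun_\Tun(\theta)$ — but that is immediate from the presence of $\lexp*R^\Gun_\Tun$ on the right, since $\lexp*R^\Gun_\Tun$ is the adjoint of $R^\Gun_\Tun$, so $\lexp*R^\Gun_\Tun(\gamma)=0$ whenever $\gamma\perp\operatorname{image}R^\Gun_\Tun$.

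**Key computation.** Writing $\gamma=R^\Gun_{\Tpun}(\theta')$ and using adjunction, $\lexp*R^\Gun_\Tun(\gamma)$ is the class function on $\TunF$ whose scalar product against any $\psi\in\Irr(\TunF)$ is $\scal{R^\Gun_\Tun(\psi)}{R^\Gun_{\Tpun}(\theta')}\GunF$. Hence
$$P(\gamma)=|\GunF|\inv\sum_{\Tun\in\cTunF}|\TunF|\sum_{\psi\in\Irr(\TunF)/\sim}
\frac{\scal{R^\Gun_\Tun(\psi)}{R^\Gun_{\Tpun}(\theta')}\GunF}{\scal{R^\Gun_\Tun(\psi)}{R^\Gun_\Tun(\psi)}\GunF}R^\Gun_\Tun(\psi),$$
where the inner sum runs over a suitable set of $\psi$; by the vanishing part of Proposition \ref{mackey-tori} only the $\Gso^F$-conjugates (equivalently $\GoF$-conjugates, after reducing to $\Tun\ni\sigma$) of $(\Tpun,\theta')$ contribute, and by part (i) those $\psi$ with a nontrivial $\zeta_n$ contribute $0=R^\Gun_\Tun(\psi)$. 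So the sum collapses to a sum over the $\GoF$-orbit of $(\Tpun,\theta')$, and part (ii) of Proposition \ref{mackey-tori} supplies the value $\scal{R^\Gun_\Tun(\psi)}{R^\Gun_\Tun(\psi)}\GunF=|\{n\in N_\GoF(\Tun)\mid\lexp n\psi=\psi\}|/|\TunF|$ in the denominator. A counting argument — the orbit-stabilizer relation for $\GoF$ acting on pairs, together with the fact that $\sum_{\Tun\in\cTunF}|\TunF|/|\text{stab}|$ telescopes against $|\GoF|$ (and $\scal{R^\Gun_\Tun(\psi)}{R^\Gun_{\Tpun}(\theta')}\GunF=\scal{R^\Gun_\Tun(\psi)}{R^\Gun_\Tun(\psi)}\GunF$ on the orbit) — then yields exactly $R^\Gun_{\Tpun}(\theta')$.

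**Main obstacle.** The delicate point is the bookkeeping of the index sets: in the disconnected setting $\Irr(\TunF)$ is defined via extensions to $\genby\TunF$, and $N_\GoF(\Tun)$ may act on $\Irr(\TunF)$ by scalars, so one must be careful that (a) the "orthogonal system" of nonzero $R^\Gun_\Tun(\psi)$ is indexed by $N_\GoF(\Tun)$-orbits of those $\psi$ with all $\zeta_n=1$, and (b) the normalization $|\TunF|\cdot(\text{stabilizer size})\inv$ matches the multiplicities coming from summing over the full $\GoF$-orbit of $(\Tpun,\theta')$ rather than over tori up to conjugacy. Once one fixes, as before Theorem \ref{mackey-levi}, representatives $\Tun$ containing $\sigma$ and uses \cite[1.39]{grnc} to pass between $N_\GoF(\Tun)$ and $N_\Gso(\Tsigo)^F\cdot\ToF$, this reduces to the same orbit-counting identity as in the connected case (Proposition \cite[4.11]{grnc}), and the argument goes through verbatim with $\Gso$ in place of $\bG$; I expect the write-up to be short, citing Proposition \ref{mackey-tori} for both the vanishing and the norm formula and invoking the adjunction of $R^\Gun_\Tun$ and $\lexp*R^\Gun_\Tun$ for the complementary vanishing.
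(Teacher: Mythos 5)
Your overall strategy (show that the right-hand side operator $P$ fixes every $R^\Gun_{\Tpun}(\theta')$ and kills the orthogonal complement of the uniform space) is, up to self-adjointness of $P$, the same argument as the paper's, which checks $\scal{p^\Gun\chi}{R^\Gun_\Tun(\theta)}\GunF=\scal\chi{R^\Gun_\Tun(\theta)}\GunF$ for all $\chi$ and all $\theta$ with $R^\Gun_\Tun(\theta)\neq0$; both rest on exactly the ingredients you cite from Proposition \ref{mackey-tori}. The problem is that your one concrete formula is wrong, and the error sits precisely in the ``bookkeeping'' you defer. Expanding $\lexp*R^\Gun_\Tun\gamma$ in an orthonormal basis of class functions on $\TunF$ (one representative per proportionality class of $\Irr(\TunF)$) gives
$$R^\Gun_\Tun\lexp*R^\Gun_\Tun\gamma=\sum_\psi\scal\gamma{R^\Gun_\Tun(\psi)}\GunF\,R^\Gun_\Tun(\psi),$$
with \emph{no} denominators: the $R^\Gun_\Tun(\psi)$ are not an orthogonal system (all $\psi$ in an $N_\GoF(\Tun)$-orbit give the same $R^\Gun_\Tun(\psi)$), so you cannot ``project'' onto them by dividing by $\scal{R^\Gun_\Tun(\psi)}{R^\Gun_\Tun(\psi)}\GunF$. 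Concretely, set $S=\{n\in N_\GoF(\Tun)\mid\lexp n\theta'=\theta'\}\supseteq\ToF$: in your expression each surviving $\psi$ contributes $R^\Gun_{\Tpun}(\theta')$ with coefficient $1$, there are $|N_\GoF(\Tun)|/|S|$ of them, and carrying this through the outer sum over tori yields $[S:\ToF]\inv R^\Gun_{\Tpun}(\theta')$ instead of $R^\Gun_{\Tpun}(\theta')$; your formula is correct only when the stabilizer of $\theta'$ in $N_\GoF(\Tun)/\ToF$ is trivial (and, with other choices of representatives in the proportionality classes, spurious phases $\zeta^{-2}$ also appear).

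Once the denominators are removed the computation does close: each surviving class contributes $\scal{R^\Gun_\Tun(\theta')}{R^\Gun_\Tun(\theta')}\GunF=|S|/|\TunF|$ times $R^\Gun_\Tun(\theta')$, the orbit has $|N_\GoF(\Tun)|/|S|$ elements, and the factors $|S|$ cancel. This is exactly the identity the paper uses, $\lexp*R^\Gun_\Tun R^\Gun_\Tun(\theta')=|\TunF|\inv\sum_{n\in N_\GoF(\Tun)}\lexp n\theta'$ followed by $R^\Gun_\Tun(\lexp n\theta')=R^\Gun_\Tun(\theta')$, which avoids any decomposition of $\Irr(\TunF)$ into orbits and any discussion of the scalars $\zeta_n$ beyond the hypothesis $R^\Gun_\Tun(\theta')\neq0$; I recommend you argue that way. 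A smaller point: to kill the orthogonal complement you assert $\lexp*R^\Gun_\Tun(\gamma)=0$ when $\gamma$ is orthogonal to the image of $R^\Gun_\Tun$, which as stated needs the (true, but unproved by you) fact that $\Irr(\TunF)$ spans the class functions on $\TunF$; it is cleaner to note that $P\gamma$ is uniform and, by self-adjointness of $P$ together with your first step, orthogonal to every $R^\Gun_\Tun(\theta)$, hence zero.
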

\begin{proof}
We have only to check that for any $\theta\in\Irr(\TunF)$ such that
$R^\Gun_\Tun(\theta)\neq 0$ and any class
function $\chi$ on $\GunF$ we have
$\scal{p^\Gun\chi}{R^\Gun_\Tun(\theta)}\GunF=
\scal\chi{R^\Gun_\Tun(\theta)}\GunF$.
By Proposition \ref{mackey-tori}, to evaluate the  
left-hand side we may restrict the sum to tori conjugate to $\Tun$,
so we get 
$$ \begin{aligned}
\scal{p^\Gun\chi}{R^\Gun_\Tun(\theta)}\GunF&=
|N_\GoF(\Tun)|\inv|\TunF|
\scal{R^\Gun_\Tun\circ\lexp*R^\Gun_\Tun\chi}
{R^\Gun_\Tun(\theta)}\GunF\\
&=|N_\GoF(\Tun)|\inv|\TunF|
\scal\chi{R^\Gun_\Tun\circ\lexp*R^\Gun_\Tun\circ
R^\Gun_\Tun(\theta)}\GunF.
\end{aligned}$$
The equality to prove is true if $R^\Gun_\Tun(\theta)=0$; otherwise
by Proposition \ref{mackey-tori} we have
$\lexp*R^\Gun_\Tun\circ R^\Gun_\Tun(\theta)=
|\TunF|\inv\sum_{n\in N_\GoF(\Tun)}\lexp n\theta$, whence in that case
$$R^\Gun_\Tun\circ\lexp*R^\Gun_\Tun\circ
R^\Gun_\Tun(\theta)=
|\TunF|\inv|N_\GoF(\Tun)|R^\Gun_\Tun(\theta),$$ since $R^\Gun_\Tun(\lexp
n\theta)=R^\Gun_\Tun(\theta)$, whence the result.
\end{proof}

We now adapt the definition of duality to our setting.
\begin{definition}
\begin{itemize}
\item
For  a connected  reductive group  $\bG$, we  define the  $\Fq$-rank as the
maximal dimension of a split torus, and define
$\eps_\bG=(-1)^{\text{$\Fq$-rank of $\bG$}}$ and
$\eta_\bG=\eps_{\bG/\rad\bG}$.
\item
For  an $F$-stable connected component  $\Gun$ of a (possibly disconnected)
reductive  group we define  $\eps_\Gun=\eps_\Gso$ and $\eta_\Gun=\eta_\Gso$
where  $\sigma\in\Gun$ induces an
$F$-stable quasi-central automorphism of $\Go$.
\end{itemize}
\end{definition}
Let   us   see   that   these   definitions   agree  with  \cite{grnc}:  in
\cite[3.6(i)]{grnc}, we define $\eps_\Gun$ to be $\eps_{\bG^{0\tau}}$ where
$\tau$  is  any  quasi-semi-simple  element  of  $\Gun$  which  induces  an
$F$-stable  automorphism  of  $\Go$  and  lies  in  a ``torus'' of the form
$N_\Gun(\bT_0\subset\bB_0)$  where both $\To$ and  $\Bo$ are $F$-stable; by
\cite[1.36(ii)]{grnc} a $\sigma$ as above is such a $\tau$.

We fix an $F$-stable pair $(\bT_0\subset\bB_0)$ and
define duality on $\Irr(\GunF)$ by
\begin{equation}\label{duality}
D_\Gun=\sum_{\Po\supset\Bo}\eta_{\bL^1} \RLG\circ\sRLG
\end{equation}
where in the sum $\Po$ runs over $F$-stable parabolic
subgroups containing $\Bo$ such that $N_\Gun(\Po)$ is non
empty, and $\Lun$ denotes $N_\Gun(\bL^0\subset\Po)$ where $\bL^0$ is the
Levi subgroup of $\Po$ containing $\To$.
The duality thus defined coincides with the duality
defined in \cite[3.10]{grnc} when $\sigma$ is in $\GunF$.

In our context we can define $\St_\Gun$ similarly to \cite[3.16]{grnc},
as $D_\Gun(\Id_\Gun)$, and \cite[3.18]{grnc} remains true: 
\begin{proposition}\label{st} $\St_\Gun$ vanishes outside
\qss\ elements, and if $x\in\GunF$ is \qss\ we have 
$$ \St_\Gun(x)=\eps_\Gun\eps_{(\bG^x)^0}|(\bG^x)^0|_p. $$
\end{proposition}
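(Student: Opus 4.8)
The plan is to adapt the proof of \cite[3.18]{grnc}, the point being that the tools used there --- Proposition~\ref{char. formula} and the duality \eqref{duality} --- are now available without assuming that $\GunF$ contains a quasi-central element. Since ${}^{*}R^{\Gto}_{\Lto}(\Id_{\Gto})=\Id_{\Lto}$ for connected reductive groups, Proposition~\ref{char. formula}(iii) yields $\sRLG(\Id_\Gun)=\Id_\Lun$ for every $F$-stable ``Levi'' $\Lun$ of $\Gun$, whence by \eqref{duality}
$$\St_\Gun=\sum_{\Po\supset\Bo}\eta_{\bL^1}\,\RLG(\Id_\Lun),$$
the sum being over $F$-stable parabolic subgroups $\Po\supset\Bo$ with $N_\Gun(\Po)\ne\emptyset$ and $\bL^0$ the Levi of $\Po$ through $\To$. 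Let $x=su\in\GunF$ be a Jordan decomposition; as $s$ is semisimple it is \qss\ (\cite{St}), so $C_\bG(s)^0$ is reductive. We evaluate the displayed sum at $x$, arguing by induction on $\dim\bG$.

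Suppose first that $s$ is not central in $\Go$, so $C_\bG(s)^0$ is a proper connected subgroup of $\Go$. Substituting Proposition~\ref{char. formula}(ii) into each $\RLG(\Id_\Lun)(su)$ and reorganizing the resulting double sum over the pairs $(\Po,h)$ according to the parabolic $\lexp{h\inv}{\Po}\cap C_\bG(s)^0$ of $C_\bG(s)^0$, one obtains the Jordan decomposition formula for duality
$$\St_\Gun(su)=\eps_\Gun\,\eps_{\Gunsu}\,\St_{\Gunsu}(su),$$
where $\Gunsu$ is regarded as an $F$-stable component of the reductive group $\genby{C_\bG(s)^0,su}$, whose identity component has dimension strictly less than $\dim\Go$; the sign $\eps_\Gun\eps_{\Gunsu}$ comes out of comparing $\eta_{\bL^1}$ with the corresponding invariant of the ``Levi'' $\lexp{h\inv}{\bL^0}\cap C_\bG(s)^0$ of $C_\bG(s)^0$, exactly as in the connected case and in \cite{grnc}. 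Since $su$ normalizes a pair $\To\subset\Bo$ of $\Go$ with $s\in\To$ precisely when it normalizes $\To\subset(\Bo\cap C_\bG(s)^0)$ in $C_\bG(s)^0$, the element $su$ is \qss\ in $\Gunsu$ iff $x$ is \qss\ in $\bG$. If $x$ is not \qss, the inductive hypothesis applied to $\Gunsu$ gives $\St_{\Gunsu}(su)=0$, hence $\St_\Gun(su)=0$. If $x$ is \qss, it gives $\St_{\Gunsu}(su)=\eps_{\Gunsu}\,\eps_{(\bG^{su})^0}\,|(\bG^{su})^0|_p$ (using $C_{C_\bG(s)^0}(su)^0=(\bG^{su})^0$); since $\eps_{\Gunsu}$ and $\eps_{(\bG^{su})^0}$ are both the $\eps$ of $(\bG^{su})^0$ they cancel, and $\St_\Gun(su)=\eps_\Gun\,\eps_{(\bG^x)^0}\,|(\bG^x)^0|_p$.

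There remains the base case, $s$ central in $\Go$. We may assume $\bG=\genby{\Gun}$, so that $s$, centralizing $\Go$ and $su$, is central in $\bG$, and Proposition~\ref{char. formula}(i) applies directly:
$$\St_\Gun(su)=\sum_{\Po\supset\Bo}\eta_{\bL^1}\,|\LoF|\inv\sum_{v\in(\Lo\cdot u)^F_\unip}Q^\Go_\Lo(u,v\inv).$$
This is a signed sum of Green functions of the connected group $\Go$ over the parabolic subgroups normalized by the quasi-central automorphism induced by $\sigma$; evaluating it is the main obstacle. I expect it to go through exactly as in the proof of \cite[3.18]{grnc}, which uses only that automorphism, not an $F$-stable quasi-central element of $\GunF$, and to give $0$ when $su$ is not \qss\ and $\eps_\Gun\,\eps_{(\bG^{su})^0}\,|(\bG^{su})^0|_p$ when it is. (In the latter case $u$ is quasi-central by \cite[1.33]{grnc}, so $\GunF$ then contains an $F$-stable quasi-central element and \cite[3.18]{grnc} applies verbatim.) This completes the argument.
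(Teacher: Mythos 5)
Your overall route is the one the paper intends: the paper gives no independent argument for \ref{st}, asserting only that the proof of \cite[3.18]{grnc} carries over once the duality \ref{duality}, the character formula \ref{char. formula} and the agreement of the sign conventions $\eps_\Gun$, $\eta_\Gun$ with those of \cite{grnc} have been secured in the present setting; your organization of that adaptation (reduction via \ref{char. formula}(ii) to a Jordan-decomposition formula for duality, induction on dimension, and the remark that when $s$ is central a \qss\ element $x=su$ is itself an $F$-stable quasi-central element of $\Gun$ --- with witness $x$ rather than $u$, which need not lie in $\Gun$ --- so that \cite[3.18]{grnc} applies literally there) is a sensible way to carry it out.

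There is, however, a genuine gap at exactly the point the generalization is about. In your base case with $s$ central and $x=su$ not \qss\ (equivalently $u$ not quasi-central), $\GunF$ may contain no $F$-stable quasi-central element (this is the situation of Example \ref{not Fstable qc}), so \cite[3.18]{grnc} cannot be invoked, and the required vanishing of $\sum_{\Po\supset\Bo}\eta_{\bL^1}|\LoF|\inv\sum_{v}Q^\Go_\Lo(u,v\inv)$ is left as an expectation (``I expect it to go through''). That signed sum of twisted Green functions is the real content of the vanishing statement; to close the argument one must actually check that the computation in \cite{grnc} uses only the automorphism induced by $\sigma$ and ingredients re-established here without any rationality hypothesis on $\sigma$ (the varieties $\bY_\bU^0$, the character formula, orthogonality of Green functions), which is precisely the verification the paper implicitly performs when it declares \cite[3.18]{grnc} still true. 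A smaller imprecision: your derivation of $\sRLG(\Id_\Gun)=\Id_\Lun$ from \ref{char. formula}(iii) invokes the connected-group identity ${}^*R(\Id)=\Id$, but the sum there runs over unipotent elements of the coset $\Gto\cdot v$, not of $\Gto$ itself, so the connected statement does not literally apply; this is easily repaired (for instance by averaging the Lefschetz numbers over $\GoF$ directly from Definition \ref{RLG}), but as written it is not a proof.
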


\section{A global formula for the scalar product of Deligne-Lusztig
characters}\label{scalproduct}
In this section we give a result of a different flavor,
where we do not restrict our attention to a connected component $\Gun$.
\begin{definition}\label{RTG}
For  any  character  $\theta$  of  $\bT^F$,  we  define  $R_\bT^\bG$  as in
\cite[2.2]{grnc}.  If for a ``torus''  $\bT$ and $\alpha=g\Go\in\bG/\Go$ we
denote  by  $\bT\comp  \alpha$  or  $\bT\comp  g$  the  unique  connected
component  of $\bT$ which  meets $g\Go$, this  is equivalent 
for $g\in\bG^F$ to 
$$R^\bG_\bT(\theta)(g)=
|\ToF|/|\bT^F|\sum_{\{a\in[\bG^F/\GoF]\mid\lexp ag\in\bT^F\GoF\}}
R^{\bG\comp{\lexp ag}}_{\bT\comp{\lexp ag}}(\theta)(\lexp ag)$$
where the right-hand side is defined by \ref{RLG} (see \cite[2.3]{grnc}).
\end{definition}

We deduce from Proposition \ref{mackey-tori}
the following formula for the whole group $\bG$:
\begin{proposition}\label{scal}
Let $\bT$, $\bT'$ be two ``tori'' of $\bG$ and let $\theta\in\Irr(\bT^F),
\theta'\in\Irr(\bT^{\prime F})$. Then
$\scal{R^\bG_\bT(\theta)}{R^\bG_{\bT'}(\theta')}{\bG^F}=0$
if $\To$ and $\Tpo$ are not $\bG^F$-conjugate, and if 
$\To=\Tpo$ we have
$$\scal{R^\bG_\bT(\theta)}{R^\bG_{\bT'}(\theta')}{\bG^F}=
\scal{\Ind_{\bT^F}^{N_\bG(\To)^F}(\theta)}
{\Ind_{\bT^{\prime F}}^{N_\bG(\To)^F}(\theta')}{N_\bG(\To)^F}.$$
\end{proposition}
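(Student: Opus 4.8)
The plan is to reduce the global statement (Proposition \ref{scal}) to the componentwise statement already proved in Proposition \ref{mackey-tori}, using the expansion of $R^\bG_\bT$ given in Definition \ref{RTG}. First I would expand both sides using that formula: for $g\in\bG^F$,
$$R^\bG_\bT(\theta)(g)=\frac{|\ToF|}{|\bT^F|}\sum_{\{a\in[\bG^F/\GoF]\mid\lexp ag\in\bT^F\GoF\}}R^{\bG\comp{\lexp ag}}_{\bT\comp{\lexp ag}}(\theta)(\lexp ag),$$
and similarly for $R^\bG_{\bT'}(\theta')$. Plugging these into $\scal{R^\bG_\bT(\theta)}{R^\bG_{\bT'}(\theta')}{\bG^F}$ and using that $\sum_{g\in\bG^F}$ splits as $\sum_{\beta\in[\bG^F/\GoF]}\sum_{g\in\beta}$ where each coset $\beta$ corresponds to a connected component $\bG^{[\beta]}$, one gets a double sum over pairs of cosets $(a,a')$ and over components. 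The key point is to change variables so that the two Deligne–Lusztig characters are evaluated on the same component: replacing $g$ by $\lexp{a\inv}h$ (so $h$ runs over $(\bG^{[\cdot]})^F$ for the component indexed by $a$) and keeping track of the remaining freedom $a'a\inv$, one should reorganize the sum so that the inner piece is exactly a scalar product $\scal{R^{\bG^1}_{\bT^1}(\theta)}{R^{\bG^1}_{\bT^1}(\theta')}{(\bG^1)^F}$ of the kind computed in Proposition \ref{mackey-tori}, summed over components $\bG^1$ meeting $\bT^F$.

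Next I would apply Proposition \ref{mackey-tori} componentwise: on each component it tells us the scalar product vanishes unless $\bT^1$ and $\bT^{\prime1}$ are $(\bG^0)^F$-conjugate (in particular $\To$ and $\Tpo$ must be $\bG^F$-conjugate for any term to survive), and when $\To=\Tpo$ it equals $\scal{\Ind_{\bT^{1F}}^{N_{\bG^1}(\To)^F}\theta}{\Ind_{\bT^{1F}}^{N_{\bG^1}(\To)^F}\theta'}{N_{\bG^1}(\To)^F}$. Summing these induced-character scalar products over the components of $\bG$ that meet a fixed ``torus'' $\bT$ (recall a ``torus'' meets every component of $\bG$, as noted after \cite[1.4]{grnc}), and using transitivity of induction through the chain $\bT^F\subset N_{\bG^1}(\To)^F$ and then across components up to $N_\bG(\To)^F$, the partial sum should collapse to the single global scalar product $\scal{\Ind_{\bT^F}^{N_\bG(\To)^F}\theta}{\Ind_{\bT^{\prime F}}^{N_\bG(\To)^F}\theta'}{N_\bG(\To)^F}$. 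The case of non-conjugate $\To,\Tpo$ follows immediately since then every component-level term vanishes.

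The main obstacle I anticipate is the bookkeeping in the first step: the map $\alpha\mapsto\bT\comp\alpha$ identifies $\bG/\Go$ with the set of components of $\bT$, but the normalizing factors $|\ToF|/|\bT^F|$, the ranges of the coset representatives $a$, and the conjugation twist when moving from $\lexp ag$ back to a fixed component must be matched precisely against the counting in Proposition \ref{mackey-tori}, which itself is phrased in terms of $N_\GoF(\Tun)$. Concretely one must verify that after the change of variables the multiplicity with which each component-level scalar product appears is exactly compensated by the ratio $|\bT^F|/|\ToF|=|\bT^F/\ToF|$ and by $|\bG^F/\GoF|$, so that no spurious factor survives; this is the point where one should be careful that $\bT^F/\ToF$ need not have the same order as $\bG^F/\GoF$ when $\bT$ does not meet $\bG^F$ in every component in an $F$-stable way — but since $\bT$ is a ``torus'' it does meet every component, and one checks the indices agree. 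Once this indexing is pinned down, the rest is a formal application of Proposition \ref{mackey-tori} and transitivity of $\Ind$, so I would present the argument as: expand via Definition \ref{RTG}, regroup by component, quote Proposition \ref{mackey-tori} on each component, and reassemble using induction in stages.
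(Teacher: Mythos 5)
Your first half is exactly the route the paper takes: expand $R^\bG_\bT$ and $R^\bG_{\bT'}$ via Definition \ref{RTG}, split the sum over $\bG^F$ into $\GoF$-cosets, and invoke Proposition \ref{mackey-tori} component by component; this does give the vanishing statement and reduces the case $\To=\Tpo$ to an identity between a weighted sum of component-level quantities and the global normalizer scalar product. The gap is in how you propose to close that identity. ``Transitivity of induction through the chain $\bT^F\subset N_{\bG^1}(\To)^F$ and then across components'' is not an available tool: $N_{\Gun}(\To)^F$ is a coset of $N_\Go(\To)^F$, not a subgroup, and it does not contain $\bT^F$ (it only contains the single component $(\bT^1)^F$); the $\Ind$ occurring in Proposition \ref{mackey-tori} is the ad hoc coset induction defined there, not induction between groups, so there is no induction in stages to quote. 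Moreover, once you have fixed a component $\Go\cdot b$, the induced-character form of Proposition \ref{mackey-tori} applies only when the two torus components are \emph{equal}, whereas your expansion only makes them $\GoF$-conjugate; forcing equality introduces a sum over conjugators $y\in\bG^F$ with $(\lexp y\bT')\comp b=\bT\comp b$, twisted characters $\lexp y\theta'$, a weight $|N_\Go(\bT\comp b)^F|\inv$, and the prefactors $|\ToF|/|\bT^F|$, $|\TpoF|/|\bT^{\prime F}|$. This is precisely the paper's intermediate expression \ref{<RTG,RTG>}, and none of it is produced by a formal appeal to transitivity of $\Ind$.

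What is actually needed to finish is a second computation parallel to the first: expand $\Ind_{\bT^F}^{N_\bG(\To)^F}\theta$ over the cosets of $N_\Go(\To)^F$ in the same way that Definition \ref{RTG} expands $R^\bG_\bT$ over the components of $\bG$, then conjugate, change variables and count stabilizers; this converts the right-hand side of the proposition into the paper's expression \ref{<Ind,Ind>}, which is then matched with \ref{<RTG,RTG>} after checking that every coset $b$ of $N_\Go(\To)^F$ in $N_\bG(\To)^F$ with $\bT\compF b\neq\emptyset$ has a representative in $\bT^F$, so the two indexing sets agree. (Your worry about $|\bT^F/\ToF|$ versus $|\bG^F/\GoF|$ is in fact harmless: Lang's theorem applied to the connected groups $\To$ and $\Go$ gives $\bT^F/\ToF\simeq(\bG/\Go)^F\simeq\bG^F/\GoF$.) The genuinely missing piece is this twist-and-weight bookkeeping on the induced-character side, which constitutes the entire second half of the paper's proof and does not follow from the results you cite.
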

\begin{proof}
Definition \ref{RTG} can be written
$$R^\bG_\bT(\theta)(g)=
|\ToF|/|\bT^F|
\sum_{\{a\in[\bG^F/\GoF]\mid\lexp ag\in\bT^F\GoF\}}
R^{\bG\comp g}_{(\lexp{a\inv}\bT)\comp g}(\lexp{a\inv}\theta)(g).$$
So the scalar product we want to compute is equal to
\begin{multline*}\scal{R^\bG_\bT(\theta)}{R^\bG_{\bT'}(\theta')}{\bG^F}=
\frac1{|\bG^F|}\frac{|\ToF|}{|\bT^F|}\frac{|\TpoF|}
{|\bT^{\prime F}|}\\
\sum_{\substack{\alpha\in\bG^F/\Go^F\\g\in\GoF.\alpha}}
\sum_{\substack{\{a\in[\bG^F/\GoF]\mid\lexp a\alpha\in\bT^F\GoF\}\\
\{a'\in[\bG^F/\GoF]\mid\lexp {a'}\alpha\in\bT^{\prime F}\GoF\}}}
R^{\bG.\alpha}_{(\lexp{a\inv}\bT)\comp \alpha}(\lexp{a\inv}\theta)(g)
\overline{R^{\bG.\alpha}_{(\lexp{a^{\prime -1}}\bT')\comp \alpha}(\lexp{a^{\prime
-1}}\theta')(g)},
\end{multline*}
which can be written
\begin{multline*}\scal{R^\bG_\bT(\theta)}{R^\bG_{\bT'}(\theta')}{\bG^F}=
\frac{|\GoF|}{|\bG^F|}\frac{|\ToF|}{|\bT^F|}\frac{|\TpoF|}
{|\bT^{\prime F}|}\\
\sum_{\alpha\in\bG^F/\Go^F}
\sum_{\substack{\{a\in [\bG^F/\GoF]\mid\lexp a\alpha\in\bT^F\GoF\}\\
\{a'\in[\bG^F/\GoF]\mid\lexp {a'}\alpha\in\bT^{\prime F}\GoF\}}}
\scal{R^{\bG.\alpha}_{(\lexp{a\inv}\bT)\comp \alpha}(\lexp{a\inv}\theta)}
{R^{\bG.\alpha}_{(\lexp{a^{\prime -1}}\bT')\comp \alpha}(\lexp{a^{\prime
-1}}\theta')}{\GoF.\alpha}.
\end{multline*}
By Proposition \ref{mackey-tori} the scalar product on the right-hand side is zero
unless $(\lexp{a\inv}\bT)\comp \alpha$ and $(\lexp{a^{\prime -1}}\bT')\comp
\alpha$
are $\GoF$-conjugate, which implies that $\To$ and $\bT^{\prime0}$ are
$\GoF$-conjugate. So we can assume that $\To=\bT^{\prime0}$. Moreover for
each $a'$ indexing a non-zero summand, there is a representative
$y\in a^{\prime-1}\GoF$ such that
$(\lexp y\bT')\comp \alpha= (\lexp{a\inv}\bT)\comp \alpha$. This last equality and
the condition on $a$ imply
the condition $\lexp {a'}\alpha\in\bT^{\prime F}\GoF$ since this condition can
be written $(\lexp y\bT')\comp \alpha\neq\emptyset$.
Thus we can do the summation over all such $y\in\bG^F$, provided
we  divide by
$|N_\GoF((\lexp{a\inv}\bT)\comp \alpha)|$.
So we get, applying Proposition \ref{mackey-tori} that the above
expression is equal to
\begin{multline*}
\frac{|\GoF|}{|\bG^F|}\frac{|\ToF|^2}{|\bT^F|
|\bT^{\prime F}|}
\sum_{\alpha\in\bG^F/\Go^F}
\sum_{\{a\in[\bG^F/\GoF]\mid\lexp a\alpha\in\bT^F\GoF\}}
|N_\GoF((\lexp{a\inv}\bT)\comp \alpha)|\inv\\
\sum_{\{y\in\bG^F\mid (\lexp
y\bT')\comp \alpha=(\lexp{a\inv}\bT)\comp \alpha\}}
\scal{\Ind_{(\lexp{a\inv}\bT)\comp \alpha}^{N_{\Go.\alpha}(\lexp{a\inv}\To)^F}
\lexp{a\inv}\theta}{\Ind_{(\lexp{a\inv}\bT)\comp \alpha}^{N_{\Go.\alpha}
(\lexp{a\inv}\To)^F}
\lexp y\theta'}{N_{\Go.\alpha}(\To)^F}.
\end{multline*}
We now conjugate everything by $a$, take $ay$ as new variable $y$ and set
$b=\lexp a\alpha$. We get
\begin{multline}\label{<RTG,RTG>}
\frac{|\ToF|^2}{|\bT^F||\bT^{\prime F}|}
\sum_{b\in\bT^F/\ToF}|N_\GoF(\bT\comp b)|\inv\\
\sum_{\{y\in\bG^F\mid (\lexp y\bT')\comp b=\bT\comp b\}}
\scal{\Ind_{\bT\compF b}^{N_{\Go.b}(\To)^F}
\theta}{\Ind_{\bT\compF b}^{N_{\Go.b}(\To)^F}\lexp y\theta'}
{N_{\Go.b}(\To)^F},
\end{multline}
since for $b\in\bT^F/\ToF$ any choice of $a\in\bG^F/\GoF$ gives an
$\alpha=\lexp{a\inv}b$ which satisfies the condition  
$\lexp a\alpha\in\bT^F\GoF$.

Let us now transform the right-hand side of \ref{scal}. Using the definition we have
\begin{multline*}
\scal{\Ind_{\bT^F}^{N_\bG(\To)^F}(\theta)}
{\Ind_{\bT^{\prime F}}^{N_\bG(\To)^F}(\theta)}{N_\bG(\To)^F}=\hfill\\
\hfill|\bT^F|\inv|\bT^{\prime F}|\inv|N_\bG(\To)^F|\inv
\sum_{\{n,x,x'\in N_\bG(\To)^F\mid \lexp xn\in\bT,\lexp{x'}n\in\bT'\}}
\theta(\lexp xn)\overline{\theta'(\lexp{x'}n)}=\\
|\bT^F|\inv|\bT^{\prime F}|\inv|N_\bG(\To)^F|\inv\hfill\\
\sum_{b,a,a'\in [N_\bG(\To)^F/N_\Go(\To)^F]}
\sum_{\left\{\substack{n\in N_\Go(\To)^Fb\\x_0,x'_0\in N_\Go(\To)^F}\left\vert 
\substack{
\lexp{x_0}n\in(\lexp{a\inv}\bT)\comp b\\ \lexp{x'_0}n\in(\lexp{a^{\prime-1}}
\bT')\comp b}\right.\right\}}\lexp{a\inv}\theta(\lexp{x_0}n)
\overline{\lexp{a^{\prime-1}}\theta'(\lexp{x'_0}n)}=\\
\frac{|\ToF|}{|\bT^F|}\frac{|\TpoF|}{|\bT^{\prime F}|}
\frac{|N_{\bG^0}(\To)^F|}{|N_\bG(\To)^F|}\hfill\\
\hfill\sum_{b,a,a'\in [N_\bG(\To)^F/N_\Go(\To)^F]}
\scal{\Ind^{N_\Go(\To)^F\cdot b}_{(\lexp{a\inv}\bT)\compF b}\lexp{a\inv}\theta}
{\Ind^{N_\Go(\To)^F\cdot b}_{(\lexp{a^{\prime-1}}\bT')\compF b}
\lexp{a^{\prime-1}}\theta'} {N_\Go(\To)^Fb}.
\end{multline*}
We may simplify the sum by conjugating by $a$ the terms in the scalar product
to get
$$\scal{\Ind^{N_\Go(\To)^F\cdot\lexp ab}_{\bT\compF{\lexp ab}}\theta}
{\Ind^{N_\Go(\To)^F\cdot\lexp ab}_{(\lexp{aa^{\prime-1}}\bT')\compF{\lexp ab}}
\lexp{aa^{\prime-1}}\theta'}{N_\Go(\To)^F\lexp ab}$$
then we may take, given $a$, the conjugate $\lexp ab$ as new variable $b$,
and $aa^{\prime-1}$ as the new variable $a'$ to get
$$
\frac{|\ToF|}{|\bT^F|}\frac{|\TpoF|}{|\bT^{\prime F}|}
\sum_{b,a'\in [\frac{N_\bG(\To)^F}{N_\Go(\To)^F}]}
\scal{\Ind^{N_\Go(\To)^F\cdot b}_{\bT\compF b}\theta}
{\Ind^{N_\Go(\To)^F\cdot b}_{(\lexp{a'}\bT')\compF b}\lexp{a'}\theta'}
{N_\Go(\To)^Fb}.
$$
Now, by Frobenius reciprocity, for the inner scalar product not to vanish,
there must be some element $x\in N_\Go(\To)^F$ such that 
$\lexp x{(\lexp{a'}\bT')\compF b}$ meets $\bT\compF b$ which, considering the
definitions, implies that $(\lexp{xa'}\bT')\comp b=\bT\comp b$.
We may then conjugate the term
$\Ind^{N_\Go(\To)^F\cdot b}_{(\lexp{a'}\bT')\compF b}\lexp{a'}\theta'$ by
such an $x$ to get
$\Ind^{N_\Go(\To)^F\cdot b}_{\bT\compF b}\lexp{xa'}\theta'$ and take $y=xa'$ as a new
variable, provided we count the number of $x$ for a given $a'$, which is
$|N_\Go(\bT\comp b)^F|$. We get
\begin{multline}\label{<Ind,Ind>}
\frac{|\ToF|}{|\bT^F|}\frac{|\TpoF|}{|\bT^{\prime F}|}
\sum_{b\in [N_\bG(\To)^F/N_\Go(\To)^F]}|N_\Go(\bT\comp b)^F|\inv\hfill\\
\hfill\sum_{\{y\in N_\bG(\To)^F\mid(\lexp y\bT')\comp b=\bT\comp b\}}
\scal{\Ind^{N_\Go(\To)^F\cdot b}_{\bT\compF b}\theta}
{\Ind^{N_\Go(\To)^F\cdot b}_{\bT\compF b}\lexp y\theta'}
{N_\Go(\To)^Fb}.
\end{multline}
Since any $b\in[N_\bG(\To)^F/N_\Go(\To)^F]$ such that 
$\bT\compF b$ is not empty has a representative in $\bT^F$
we can do the first summation over $b\in[\bT^F/\ToF]$ so that
\ref{<RTG,RTG>} is equal to \ref{<Ind,Ind>}.
\end{proof}
\section{Counting unipotent elements in disconnected groups}\label{counting}
We wrote the following in february 1994, in answer to a question of Cheryl
Praeger. We are aware that a proof appeared recently in \cite[Theorem
1.1]{LLS} but our original proof reproduced here is much shorter and casefree.
\begin{proposition}\label{nbunip} 
Assume $\Gun/\Go$ unipotent and take
$\sigma\in\Gun$ unipotent $F$-stable and quasi-central (see 
\ref{qc rationnel}). Then
the number of unipotent elements of $\GunF$ is given by
$|(\Gso)^F|_p^2|\Go^F|/|(\Gso)^F|$.
\end{proposition}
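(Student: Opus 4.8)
The strategy is to mimic Steinberg's classical argument (\cite[15.1]{St}), which computes the number of unipotent elements of $\GoF$ as $|\GoF|_p^2$, by reducing the count on the component $\GunF$ to a count over a Bruhat-type decomposition adapted to $\sigma$. Since $\sigma$ is unipotent, $F$-stable and quasi-central, one expects that the unipotent elements of $\GunF$ are governed by the group $\Gso$ (the centralizer of $\sigma$ in $\Go$), which is itself connected reductive and $F$-stable, and the factor $|\Go^F|/|(\Gso)^F|$ should account for the $\GoF$-orbit of $\sigma$ inside its conjugacy class.

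\textbf{Step 1: Reduce to a fixed-point count.} Every unipotent element of $\GunF$ can be written $u\sigma'$ where $\sigma'$ is $\GoF$-conjugate to $\sigma$ (this uses that any two quasi-central unipotent elements inducing the same action differ by a unipotent element of the appropriate centralizer, together with \cite[1.33]{grnc}); more precisely I would first show that the set of unipotent elements of $\GunF$ is a single $\GoF$-orbit-union over the $\GoF$-classes meeting $\Gun$, and parametrize it as $\GoF\times_{(\Gso)^F}\{\text{unipotent elements of }(\Gso)^F\cdot\sigma\text{ fixed appropriately}\}$. This gives the factor $|\Go^F|/|(\Gso)^F|$ and reduces the problem to: \emph{the number of unipotent elements of $(\Gso)^F\cdot\sigma$ equals $|(\Gso)^F|_p^2/1$} — wait, one must be careful: the claim then becomes that the number of unipotent elements in the coset $(\Gso)^F\sigma$ (equivalently, since $\sigma$ is central in $\langle\Gso,\sigma\rangle$, the number of unipotent elements $v\in(\Gso)^F$ with $v\sigma$ unipotent, which since $\sigma^n=1$ for suitable $n$ prime to nothing relevant... ) is $|(\Gso)^F|_p^2$. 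Because $\sigma$ is unipotent and commutes with everything in $\Gso$, $v\sigma$ is unipotent iff $v$ is unipotent, so this is exactly Steinberg's count $|(\Gso)^F|_p^2$ applied to the connected reductive group $\Gso$.

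\textbf{Step 2: Justify the orbit-counting bookkeeping.} The delicate point is the orbit count in Step 1: I must check that the $\GoF$-conjugates of $\sigma$ lying in $\GunF$ form exactly one orbit of size $|\Go^F|/|(\Gso)^F|$ (here $(\Gso)^F = C_\Go(\sigma)^F$ since $\Gso$ denotes $C_\Go(\sigma)$, and one should verify $C_{\GoF}(\sigma)=(\Gso)^F$, i.e. that $C_\Go(\sigma)$ is connected — this holds for quasi-central $\sigma$ by \cite[1.25]{grnc} or the cited structure theory, so there is no $H^1$ correction), and that summing the per-$\sigma'$ unipotent counts $|(\bG^{0\,\sigma'})^F|_p^2 = |(\Gso)^F|_p^2$ (constant over the orbit) produces the stated product. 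Assembling: (number of unipotent elements of $\GunF$) $= \frac{|\Go^F|}{|(\Gso)^F|}\cdot|(\Gso)^F|_p^2$, which is the claimed formula.

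\textbf{Main obstacle.} The genuinely nontrivial input is Steinberg's theorem for the connected group $\Gso$, which we are free to quote. The part requiring actual care — and the main obstacle to a clean write-up — is the reduction in Step 1/Step 2: making precise that every unipotent element of $\GunF$ is $\GoF$-conjugate into $(\Gso)^F\cdot\sigma$ and that the stabilizer computation has no $H^1(F,\cdot)$ defect. For the first, I would use Lemma \ref{qc rationnel} to know $\sigma$ exists, then take an arbitrary unipotent $x\in\GunF$, use that $x$ is quasi-semi-simple (every unipotent element of $\Gun$ is \qss\ when $\Gun/\Go$ is unipotent, essentially as in the proof of Lemma \ref{qc rationnel}) hence normalizes an $F$-stable pair $\To\subset\Bo$; conjugating by $\GoF$ we may assume $x$ normalizes the fixed pair, so $x$ and $\sigma$ differ by an element of $(N_\Go(\To\subset\Bo))^F\cdot x = \To^F x$... and then invoke that a quasi-central unipotent element is determined up to $\Gso$-conjugacy by its action, via \cite[1.33]{grnc} and \cite[1.36]{grnc}. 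For the stabilizer, connectedness of $C_\Go(\sigma)$ for quasi-central $\sigma$ gives $C_{\GoF}(\sigma)=(\Gso)^F$ directly by Lang's theorem, so no correction term appears.
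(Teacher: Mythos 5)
Your Step 1 is where the argument collapses, and it cannot be repaired: it is simply not true that every unipotent element of $\GunF$ is \qss, nor that it is $\GoF$-conjugate into $(\Gso)^F\cdot\sigma$. Quasi-semi-simple unipotent elements are quasi-central by \cite[1.33]{grnc} and form a single class by \cite[1.37]{grnc}; a general unipotent element of the coset is neither. Concretely, take $\Go=\GL_2\times\GL_2$ in characteristic $2$ with $\sigma$ the swap of the two factors (so $\sigma$ is unipotent and quasi-central, and $\Gso\simeq\GL_2$ embedded diagonally). The element $x=(u,1)\sigma$ with $u$ a regular unipotent element of $\GL_2(\Fq)$ is unipotent of order $4$ and satisfies $x^2=(u,u)\ne 1$, whereas every unipotent element of $\Gso\cdot\sigma$ is of the form $(c,c)\sigma$ with $c$ unipotent and hence has square $(c^2,c^2)=1$ in characteristic $2$. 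Since conjugation preserves the square up to conjugacy, $x$ is not even $\Go$-conjugate into $\Gso\cdot\sigma$, and it is not \qss. Consequently the map $(g,y)\mapsto gyg\inv$ from $\GoF\times\{\text{unipotent elements of }(\Gso)^F\sigma\}$ to the unipotent set of $\GunF$ is not surjective, and its fibers are not all of size $|(\Gso)^F|$ (for $y=\sigma$ the fiber is $\{g\in\GoF\mid g\inv\cdot\lexp\sigma g\in(\Gso)^F\}$, which is strictly larger than $(\Gso)^F$); in the example above the two total counts happen to agree, but that agreement is exactly the statement to be proved, so the bookkeeping of Step 2 is circular as well as set-theoretically wrong. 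The same false claim ("every unipotent element of $\Gun$ is \qss\ when $\Gun/\Go$ is unipotent") also invalidates the sketched justification at the end of your proposal.

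Your instinct about where the factor $|\Go^F|/|(\Gso)^F|$ comes from (the single class of quasi-central unipotents) is right, but the reduction of the full unipotent count to that one class requires a character-theoretic argument, not direct conjugation. The paper writes the count as $|\GunF|\scal{\chi_\cU}\Id\GunF$ where $\chi_\cU$ is the characteristic function of the unipotent set, uses that the duality $\D_\Gun$ of \ref{duality} is an isometry and commutes with multiplication by $\chi_\cU$ (via the character formula for the parabolic $R^\Gun_\Lun$ and $\lexp*R^\Gun_\Lun$), so that the scalar product becomes $\scal{\St_\Gun\chi_\cU}{\St_\Gun}\GunF$; by Proposition \ref{st} the character $\St_\Gun$ vanishes off \qss\ elements, so only the quasi-central unipotent class contributes, with value $|(\Gso)^F|_p$, giving $(|\Go^F|/|(\Gso)^F|)\cdot|(\Gso)^F|_p^2$. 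In particular the one ingredient you planned to quote, Steinberg's count for the connected group $\Gso$, plays no role in a correct proof along these lines: the square $|(\Gso)^F|_p^2$ arises as a Steinberg-character value squared, not as a count of unipotent elements of $(\Gso)^F$.
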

\begin{proof}
Let $\chi_\cU$ be the characteristic function of the set of unipotent elements
of $\GunF$. Then $|(\Gun)^F_\unip|=|\GunF|\scal{\chi_\cU}\Id\GunF$ and
$$
\scal{\chi_\cU}\Id\GunF=\scal{\D_\Gun(\chi_\cU)}
{\D_\Gun(\Id)}\GunF=\scal{\D_\Gun(\chi_\cU)}{\St_\Gun}\GunF,
$$
the first equality since $\D_\Gun$ is an isometry by \cite[3.12]{grnc}.
According to \cite[2.11]{grnc}, for any $\sigma$-stable and $F$-stable Levi 
subgroup $\bL^0$ of a $\sigma$-stable parabolic subgroup of $\Go$, setting
$\Lun=\bL^0.\sigma$, we have
$R^\Gun_\Lun(\pi.{\chi_\cU}|_\LunF)=
R^\Gun_\Lun(\pi).{\chi_\cU}$ and
${}^* R^\Gun_\Lun(\varphi).{\chi_\cU}|_\LunF=
{}^* R^\Gun_\Lun(\varphi.{\chi_\cU})$,
thus, by \ref{duality},
$\D_\Gun(\pi.{\chi_\cU})=\D_\Gun(\pi).{\chi_\cU}$;
in particular $\D_\Gun({\chi_\cU})=\D_\Gun(\Id).{\chi_\cU}=\St_\Gun.{\chi_\cU}$.
Now, by Proposition \ref{st}, the only unipotent elements on which $\St_\Gun$
does not vanish are the \qss\ (thus quasi-central) ones;
by \cite[1.37]{grnc} all such are in the $\Go^F$-class of $\sigma$ and,
again by \ref{st} we have $\St_\Gun(\sigma)=|(\Gso)^F|_p$.
We get
$$
\begin{aligned}
|\GunF|\scal{\D_\Gun(\chi_\cU)}{\St_\Gun}\GunF
&=|\GunF|\scal{\St_\Gun.{\chi_\cU}}{\St_\Gun}\GunF\\
&=|\{\hbox{$\Go^F$-class of $\sigma$}\}| |(\Gso)^F|_p^2\\
\end{aligned}$$ whence the proposition.
\end{proof}
\begin{example}
The  formula  of  Proposition  \ref{nbunip}  applies in the following cases
where  $\sigma$ induces a  diagram automorphism of  order $2$ and  $q$ is a
power of $2$:
\begin{itemize}
\item $\Go=\SO_{2n}$, $(\Gso)^F=\SO_{2n-1}(\Fq)$;
\item $\Go=\GL_{2n}$, $(\Gso)^F=\Sp_{2n}(\Fq)$;
\item $\Go=\GL_{2n+1}$, $(\Gso)^F=\SO_{2n+1}(\Fq)\simeq \Sp_{2n}(\Fq)$;
\item $\Go=E_6$, $(\Gso)^F=F_4(\Fq)$;
\end{itemize}

And  it  applies  to  the  case  where  $\Go=\Spin_8$ where $\sigma$ induces a
diagram  automorphism of order $3$ and $q$ is a power of $3$, in which case
$(\Gso)^F=G_2(\Fq)$.
\end{example}

\section{Tensoring by the Steinberg character}\label{tensoring}
\comm{Jean Michel, june 1994}
\begin{proposition}\label{*R}
Let $\Lun$ be an $F$-stable ``Levi'' of $\Gun$.
Then, for any class function $\gamma$ on $\GunF$ we have: $$\lexp
*R^\Gun_\Lun(\gamma\cdot\eps_\Gun\St_\Gun)=\eps_\Lun\St_\Lun
\Res^\GunF_\LunF\gamma.$$
\end{proposition}
\begin{proof}
Let $su$ be the Jordan decomposition of a quasi-semi-simple element of
$\Gun$ with $s$ semi-simple.
We claim  that $u$ is quasi-central in $\bG^s$.
Indeed $su$, being quasi-semi-simple, is in a ``torus'' $\bT$,
thus $s$ and $u$ also are in $\bT$. By \cite[1.8(iii)]{grnc} the
intersection of $\bT\cap\bG^s$ is a ``torus'' of $\bG^s$, thus
$u$ is quasi-semi-simple in $\bG^s$, hence quasi-central since unipotent.

Let $tv$ be the Jordan decomposition of an element $l\in\LunF$ where $t$ is
semi-simple.
Since $\St_\Lun$  vanishes  outside  \qss\  elements the right-hand side of the
proposition vanishes on $l$ unless it is \qss\ which by our claim 
means that $v$ is quasi-central in $\bL^t$.
By  the character formula \ref{char.  formula} the left-hand side of the
proposition evaluates
at    $l$    to    $$\lexp   *R^\Gun_\Lun(\gamma\cdot\eps_\Gun\St_\Gun)(l)=
|\GtoF|\inv\sum_{u\in(\Gto\cdot v)^F_\unip}
Q^\Gto_\Lto(u,v\inv)\gamma(tu)\eps_\Gun\St_\Gun(tu).$$    
By    the    same argument as above, applied to $\St_\Gun$,  the only non
zero   terms  in  the above  sum  are  for  $u$  quasi-central  in  $\bG^t$.
For such $u$, by \cite[4.16]{grnc}, $Q^\Gto_\Lto(u,v\inv)$ vanishes  
unless $u$  and $v$ are
$(\Gto)^F$-conjugate. Hence both sides of the equality to prove vanish
unless $u$ and $v$ are quasi-central and $(\Gto)^F$-conjugate.
In that case  by \cite[4.16]{grnc} and \cite[(**) page  98]{DM2} we have
$Q^\Gto_\Lto(u,v\inv)=Q^\Glo_\Llo(1,1)=
\eps_\Glo\eps_\Llo|\GloF|_{p'}|\LloF|_p$.
Taking  into  account  that  the  $\GtoF$-class of $v$ has
cardinality $|\GtoF|/|\GloF|$ and that by \ref{st} we have
$\St_\Gun(l)=\eps_\Gso\eps_\Glo|\GloF|_p$,  the  left-hand  side of the
proposition reduces to
$\gamma(l)\eps_\Llo|\LloF|_p$,  which is  also the  value of the right-hand
side by applying \ref{st} in $\Lun$.
\end{proof}
By adjunction, we get
\begin{corollary}\label{R}
For any class function $\lambda$ on $\LunF$ we have:
$$R^\Gun_\Lun(\lambda)\eps_\Gun\St_\Gun=
\Ind^\GunF_\LunF(\eps_\Lun\St_\Lun\lambda)$$
\end{corollary}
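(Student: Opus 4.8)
The plan is to deduce Corollary~\ref{R} from Proposition~\ref{*R} purely formally, using the adjunction between $\RLG$ and $\sRLG$ established after Definition~\ref{RLG}. First I would fix a class function $\lambda$ on $\LunF$ and an arbitrary class function $\gamma$ on $\GunF$, and compute the scalar product $\scal{R^\Gun_\Lun(\lambda)\eps_\Gun\St_\Gun}{\gamma}{\GunF}$. Since multiplication by the fixed function $\eps_\Gun\St_\Gun$ is (up to complex conjugation of its values) self-adjoint for the scalar product on $\GunF$, and since $\eps_\Gun=\pm1$ while $\St_\Gun$ takes real (indeed rational integer) values on \qss\ elements and vanishes elsewhere, I can move the factor $\eps_\Gun\St_\Gun$ onto $\gamma$: the scalar product equals $\scal{R^\Gun_\Lun(\lambda)}{\gamma\cdot\eps_\Gun\St_\Gun}{\GunF}$.

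Next I would apply the adjunction $\scal{R^\Gun_\Lun(\lambda)}{\delta}{\GunF}=\scal{\lambda}{\sRLG(\delta)}{\LunF}$ with $\delta=\gamma\cdot\eps_\Gun\St_\Gun$, turning the expression into $\scal{\lambda}{\lexp *R^\Gun_\Lun(\gamma\cdot\eps_\Gun\St_\Gun)}{\LunF}$. Now Proposition~\ref{*R} applies verbatim and rewrites $\lexp *R^\Gun_\Lun(\gamma\cdot\eps_\Gun\St_\Gun)$ as $\eps_\Lun\St_\Lun\cdot\Res^\GunF_\LunF\gamma$, so the scalar product becomes $\scal{\lambda}{\eps_\Lun\St_\Lun\Res^\GunF_\LunF\gamma}{\LunF}$. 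Moving the self-adjoint factor $\eps_\Lun\St_\Lun$ back onto $\lambda$ and then using the adjunction of $\Res$ and $\Ind$ (as defined for cosets of finite groups in the statement of Proposition~\ref{mackey-tori}, or simply ordinary restriction/induction of $\LoF$-class functions), this equals $\scal{\Ind^\GunF_\LunF(\eps_\Lun\St_\Lun\lambda)}{\gamma}{\GunF}$. Since $\gamma$ was arbitrary, the two class functions $R^\Gun_\Lun(\lambda)\eps_\Gun\St_\Gun$ and $\Ind^\GunF_\LunF(\eps_\Lun\St_\Lun\lambda)$ coincide.

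The only points needing a word of care are the two ``self-adjointness'' moves and the Res/Ind adjunction in the disconnected-coset setting. For the former, one uses that $\St_\Gun$ is supported on \qss\ elements where it takes real values (Proposition~\ref{st}), so $\overline{\eps_\Gun\St_\Gun}=\eps_\Gun\St_\Gun$ pointwise and hence $\scal{f\cdot\eps_\Gun\St_\Gun}{g}{\GunF}=\scal{f}{g\cdot\eps_\Gun\St_\Gun}{\GunF}$ for all class functions $f,g$; likewise in $\Lun$. For the latter, $\Res^\GunF_\LunF$ and $\Ind^\GunF_\LunF$ on $\GoF$- and $\LoF$-class functions are adjoint with the normalisation of $\Ind$ used in this paper (this is the coset analogue of ordinary Frobenius reciprocity, checked by the same computation as in the proof of Proposition~\ref{mackey-tori}). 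I do not expect a genuine obstacle here; the whole corollary is a formal adjunction argument, and the substantive content has already been discharged in Proposition~\ref{*R}.
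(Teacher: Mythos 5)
Your argument is correct and is exactly the paper's proof: Corollary \ref{R} is stated there as an immediate consequence of Proposition \ref{*R} "by adjunction", which is precisely the formal computation you spell out (self-adjointness of multiplication by the real-valued function $\eps_\Gun\St_\Gun$, adjunction of $\RLG$ and $\sRLG$, and coset Frobenius reciprocity with the paper's normalisation of $\Ind$). Your added care about these normalisations and the realness of $\St_\Gun$ is sound and fills in exactly what the paper leaves implicit.
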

\section{Characteristic functions of \qss\ classes}\label{characteristic}
\comm{28 oct 1993}

One  of the goals of this section is Proposition \ref{fc qss} where we give
a  formula for the characteristic function of  a \qss\ class which shows in
particular  that it is uniform; this  generalizes the case of quasi-central
elements given in \cite[4.14]{grnc}.

If  $x\in\GunF$  has Jordan decomposition $x=su$ we will denote by $d_x$ 
the map from class functions on  $\GunF$ to class functions on $(\Gunu)^F$ 
given by $$(d_x f)(v)=
\begin{cases}f(sv)&\text{if $v\in(\Gunu)^F$ is unipotent}\\ 0&\text{otherwise.}\\
\end{cases}$$

\begin{lemma}\label{dx_o_R} 
Let $\Lun$ be an $F$-stable ``Levi'' of $\Gun$.
If $x=su$ is the Jordan decomposition of an element of $\LunF$ we have
$d_x\circ\lexp *R^\Gun_\Lun=\lexp* R^\Gunu_\Lunu\circ d_x$.
\end{lemma}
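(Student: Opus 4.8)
The plan is to reduce both sides to the character formula \ref{char. formula}(iii) and compare term by term. First I would fix $l=tv\in\LunF$ a unipotent element of $(\Lunu)^F$ — if $l$ is not of this shape then the left-hand side is zero by definition of $d_x$, and on the right-hand side $\lexp*R^\Gunu_\Lunu\circ d_x(\gamma)$ is a class function on $(\Lunu)^F$ evaluated at something not in that coset (or not at a point of the form $u\cdot(\text{unipotent})$), so it vanishes too; so both sides agree trivially off the relevant locus and one may assume $l=tv$ with $t=s$ and $v$ unipotent in $\Lunu$. Then by \ref{char. formula}(iii) applied inside $\Gun$ and a second application of the \emph{same} formula inside the smaller group $\Gunu$ (whose identity component is $C_\bG(s)^0$, reductive, with the ``Levi'' $\Lunu$), both sides become sums of the form $|\Gto[?]F|\inv\sum_{u'} Q(u',v\inv)\gamma(\text{something})$, and the task is to check the indexing sets, the Green-function factors $Q$, and the arguments of $\gamma$ match.

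The key identifications to carry out are the following. On the left, $(d_x\circ\lexp*R^\Gun_\Lun)(\gamma)(v)=(\lexp*R^\Gun_\Lun\gamma)(sv)$, and since the Jordan decomposition of $sv$ is $s\cdot v$ (as $v$ is unipotent and commutes with $s$), formula \ref{char. formula}(iii) with ``$t$''${}=s$ gives
$$(\lexp*R^\Gun_\Lun\gamma)(sv)=|\GsoF|\inv\sum_{u'\in(C_\bG(s)^0\cdot v)^F_\unip}Q^{C_\bG(s)^0}_{C_\bL(s)^0}(u',v\inv)\,\gamma(su').$$
On the right, $\lexp*R^\Gunu_\Lunu$ is the analogous functor for the pair $\Gunu\supset\Lunu$ (note $\langle\Gunu\rangle^0=C_\bG(s)^0$ and $\langle\Lunu\rangle^0=C_\bL(s)^0$, so these are genuine ``Levis'' in the disconnected reductive group $C_\bG(s)\cdot u$), and $d_x\gamma$ is supported on unipotent elements $u'\in(\Gunu)^F$ where it takes the value $\gamma(su')$. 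Applying \ref{char. formula}(iii) inside $\Gunu$ — here the ``semisimple part'' $t$ of the element $v$ of $\Lunu$ is trivial, so $\Gto$ becomes $C_\bG(s)^0$ and $Q^{C_\bG(s)^0}_{C_\bL(s)^0}$ appears — yields exactly the same sum. The comparison is then: the summation range $(C_\bG(s)^0\cdot v)^F_\unip$, the coefficient $Q^{C_\bG(s)^0}_{C_\bL(s)^0}(u',v\inv)$, the normalizing factor $|C_\bG(s)^{0F}|\inv$, and the value $(d_x\gamma)(u')=\gamma(su')$ all coincide with the left-hand side.

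The main obstacle is purely bookkeeping: one must make sure that the two applications of \ref{char. formula}(iii) are being applied to the \emph{same} decomposition, i.e.\ that passing to $C_\bG(s)$ and then taking unipotent Jordan parts in the inner group produces the identical index set and Green functions as taking the Jordan decomposition $sv$ first in the big group; this is where the identity $Q^\Gto_\Lto$ with $t$ the semisimple part is used in both places. I would also need to note that $\langle\Lunu\rangle^0 = C_\bL(s)^0$ is a Levi of a parabolic of $C_\bG(s)^0$ (so that $\lexp*R^\Gunu_\Lunu$ is defined via Definition \ref{RLG}), which follows because $\bL^0$ is a Levi of a parabolic $\Po$ of $\Go$ and intersecting with $C_\bG(s)^0$ preserves this structure — a standard fact about centralizers of semisimple elements. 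Once these identifications are in place the equality of class functions on $(\Lunu)^F$ is immediate.
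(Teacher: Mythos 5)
Your argument is correct, and its computational core is the same as the paper's, but you route the first half differently. The paper's proof is the three-step chain $(d_x\lexp*R^\Gun_\Lun f)(v)=(\lexp*R^\Gun_\Lun f)(sv)=(\lexp*R^\Gunsu_\Lunsu f)(sv)=(\lexp*R^\Gunu_\Lunu d_xf)(v)$, in which the middle equality is quoted from \cite[2.9]{grnc} and only the last one uses the character formula \ref{char. formula}(iii). You instead apply \ref{char. formula}(iii) twice: once in $\Gun$ at $sv$ (whose semisimple part is $s$, so that $\Gto$ there becomes $C_\bG(s)^0$ and $\Lto$ becomes $C_\bL(s)^0$), and once in $\Gunu$ at the unipotent $v$ (trivial semisimple part), observing that both evaluations equal $|C_\bG(s)^{0F}|\inv\sum_{u'\in(C_\bG(s)^0\cdot v)^F_\unip}Q^{C_\bG(s)^0}_{C_\bL(s)^0}(u',v\inv)\,\gamma(su')$, the implicit parabolic being $(\bP^s)^0$ with unipotent radical $(\bU^s)^0$ in both applications. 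This buys you independence from the external reference \cite[2.9]{grnc} at the price of the bookkeeping you describe (commutation of $s$ with $v$, the fact that $C_\bL(s)^0$ is a Levi of a parabolic of $C_\bG(s)^0$ so that $\lexp*R^\Gunu_\Lunu$ is defined, and the matching of the Green functions); all of these points do go through. One small repair: your justification that both sides vanish at a non-unipotent $v\in(\Lunu)^F$ (``evaluated at something not in that coset'') is not right as stated, since such a $v$ does lie in $(\Lunu)^F$ and a class function there need not vanish at it; the correct one-line reason, available from your own setup, is that \ref{char. formula}(iii) applied in $\Gunu$ at $v$ with Jordan decomposition $t'v'$, $t'\ne1$, only evaluates $d_x\gamma$ at elements $t'u'$ with nontrivial semisimple part, where $d_x\gamma$ vanishes by definition.
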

\begin{proof}For $v$ unipotent in $(\Gunu)^F$ and $f$ a class function on
$\GunF$
we have  $$  (d_x\lexp{*}R^\Gun_\Lun f)(v)=(\lexp{*}R^\Gun_\Lun
f)(sv)=  (\lexp  *R^\Gunsu_\Lunsu f)(sv)=(\lexp  *R^\Gunu_\Lunu  d_x
f)(v)$$  where the second equality is \cite[2.9]{grnc}  and the last is by the
character formula  \ref{char. formula}(iii).
\end{proof}

\begin{proposition}\label{d o p}
If $x=su$ is the Jordan decomposition of an element of
$\GunF$, we have $d_x\circ p^\Gun=p^\Gunu\circ d_x$.
\end{proposition}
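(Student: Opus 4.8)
Here is how I would approach it.

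The plan is to reduce everything to the description of the uniform projectors given by Corollary \ref{(3)}. Since $s$ is semisimple and $F$-stable, $C_\bG(s)$ is a (possibly disconnected) reductive group with identity component $C_\Go(s)^0$, and $\Gunu=C_\bG(s)^0\cdot u$ is an $F$-stable connected component of it; in particular Corollary \ref{(3)} applies to $\Gunu$, expressing $p^\Gunu$ as a weighted sum of the $R^\Gunu_\bS\circ\lexp*R^\Gunu_\bS$ over the $F$-stable ``tori'' $\bS$ of $\Gunu$. Granting this, it suffices to establish two assertions: \emph{(A)} that $d_x R^\Gun_\Tun(\theta)$ is uniform on $(\Gunu)^F$ for every $\Tun\in\cTunF$ and $\theta\in\Irr(\TunF)$, and \emph{(B)} that $\lexp*R^\Gunu_\bS(d_x g)=0$ for every $F$-stable ``torus'' $\bS$ of $\Gunu$ whenever $g$ is a class function on $\GunF$ orthogonal to all uniform functions. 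Indeed, by Corollary \ref{(3)} the function $p^\Gun f$ is a combination of the $R^\Gun_\Tun(\theta)$, so \emph{(A)} makes $d_x(p^\Gun f)$ uniform on $(\Gunu)^F$, whence $p^\Gunu\bigl(d_x(p^\Gun f)\bigr)=d_x(p^\Gun f)$; while \emph{(B)} applied to $g=f-p^\Gun f$, together with Corollary \ref{(3)} for $\Gunu$, yields $p^\Gunu(d_x g)=0$, i.e. $p^\Gunu(d_x f)=p^\Gunu\bigl(d_x(p^\Gun f)\bigr)$, and the two identities combine to $p^\Gunu\circ d_x=d_x\circ p^\Gun$.

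For \emph{(A)}, I would use Proposition \ref{char. formula}(ii). For $v$ unipotent in $(\Gunu)^F$ the Jordan decomposition of $sv$ is $s\cdot v$, so
$$d_x R^\Gun_\Tun(\theta)(v)=R^\Gun_\Tun(\theta)(sv)=\sum_{\{h\in\GoF\mid\lexp h\bT\owns s\}}\frac{|\lexp h\To\cap\GsoF|}{|\ToF||\GsoF|}\,R^{C_\bG(s)^0\cdot sv}_{\lexp h\Tun\cap(C_\bG(s)^0\cdot sv)}(\lexp h\theta)(sv).$$
As $s$ is central in $C_\bG(s)$ and $C_\bG(s)^0\cdot v=\Gunu$, the component $C_\bG(s)^0\cdot sv$ is $s\cdot\Gunu$, and left translation by the $F$-fixed central element $s$ is an isomorphism over $\Fq$ of the component $\Gunu$ of $C_\bG(s)$ onto $s\cdot\Gunu$, compatible with $C_\bG(s)^0$-conjugacy; it therefore carries ``tori'' to ``tori'' and, by pullback of functions, Deligne--Lusztig characters of $s\cdot\Gunu$ to Deligne--Lusztig characters of $\Gunu$. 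Hence each summand becomes a Deligne--Lusztig character of $\Gunu$ evaluated at $v$, and $d_x R^\Gun_\Tun(\theta)$ is uniform.

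For \emph{(B)}, the key input is Lemma \ref{dx_o_R}: for an $F$-stable ``torus'' $\Tun=N_\Gun(\To\subset\Bo)$ of $\Gun$ that \emph{contains} $x=su$, it gives
$$\lexp*R^\Gunu_{C_\bT(s)^0\cdot u}(d_x g)=d_x\bigl(\lexp*R^\Gun_\Tun(g)\bigr)=0,$$
the last equality because orthogonality of $g$ to all uniform functions on $\GunF$ forces $\lexp*R^\Gun_\Tun(g)=0$. So \emph{(B)} follows once I know that every $F$-stable ``torus'' $\bS$ of $\Gunu$ is $\GsoF$-conjugate to one of the form $C_\bT(s)^0\cdot u$ with $su\in\bT$ a ``torus'' of $\bG$: indeed $d_x g$, being a class function on $(\Gunu)^F$, is $\GsoF$-conjugation invariant, so $\lexp*R^\Gunu_\bS(d_x g)$ vanishes together with the corresponding one for $C_\bT(s)^0\cdot u$. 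To produce such a $\bT$ I would write $\bS=\bR\cap\Gunu$ with $\bR$ an $F$-stable ``torus'' of $C_\bG(s)$; by Lang's theorem $\bS$ has an $\Fq$-point $w=cu$ with $c=wu\inv\in\GsoF$, and after replacing $\bR$ by $\lexp{c\inv}\bR$ we may assume $u\in\bR$, so $su$ normalizes the pair $\bS^0\subset\bB^0_s$ of a maximal torus and a Borel subgroup of $C_\bG(s)^0$ defining $\bR$ (since $s$ centralizes $C_\bG(s)^0$); then, by the standard argument and using that $su$ has finite order, I would extend this pair to an $F$-stable and $su$-stable pair $\To\subset\Bo$ of $\Go$ with $(\To\cap C_\bG(s))^0=\bS^0$ and $\Bo\cap C_\bG(s)^0=\bB^0_s$, and take $\bT=N_\bG(\To\subset\Bo)$; one then checks $C_\bT(s)^0=\bS^0$ and hence $C_\bT(s)^0\cdot u=\bS$. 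I expect this last lifting step — essentially the $F$-equivariant statement that the ``tori'' of $C_\bG(s)$ are the intersections with $C_\bG(s)$ of the ``tori'' of $\bG$ containing $s$ — to be the main obstacle: because $s$ need not lie in $\Go$ nor $u$ in $C_\bG(s)^0$, the bookkeeping of connected components is delicate, and the compatible extension of the Borel subgroup requires a fixed-point argument for the finite cyclic group generated by $su$. Everything else is formal, given Corollary \ref{(3)}, Proposition \ref{char. formula}(ii) and Lemma \ref{dx_o_R}.
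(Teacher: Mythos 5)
Your reduction of the statement to the two assertions \emph{(A)} and \emph{(B)} would indeed prove the proposition, but both assertions are where the real content lies and neither is established by your arguments. In \emph{(A)}, the identity you get from Proposition \ref{char. formula}(ii) together with translation by the central element $s$ (which is fine, and plays the role of the paper's use of \ref{char. formula}(i)) holds only at unipotent elements: it shows that $d_x R^\Gun_\Tun(\theta)$ agrees on the unipotent elements of $(\Gunu)^F$ with a linear combination $\Phi$ of Deligne--Lusztig characters of $\Gunu$. But $d_x R^\Gun_\Tun(\theta)$ vanishes off the unipotent set by definition, while $\Phi$ does not, so the two functions are different and uniformity of $d_x R^\Gun_\Tun(\theta)$ does not follow. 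What you would need is precisely that truncation to unipotent support preserves uniformity on $(\Gunu)^F$, i.e. $d_u\circ p^\Gunu=p^\Gunu\circ d_u$ for the unipotent element $u$ viewed inside $C_\bG(s)$ --- a special case of the very proposition being proved (compare Corollary \ref{uniform}, which the paper deduces \emph{from} it). So \emph{(A)} as argued is circular.

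In \emph{(B)} the situation is worse: Lemma \ref{dx_o_R} for a ``torus'' requires $x=su\in\TunF$, and ``tori'' of $\Gun$ containing $x$ exist only when $x$ is quasi-semi-simple. Correspondingly, your lifting claim --- that every $F$-stable ``torus'' $\bS$ of $\Gunu$ is $\GsoF$-conjugate to one of the form $C_\bT(s)^0\cdot u$ with $su\in\bT$ --- would force $u\in\bS$, hence $u$ quasi-central in $C_\bG(s)$, which fails for a general $x\in\GunF$ (already for connected $\bG$ and $x$ a nontrivial unipotent element no maximal torus passes through $x$). Note also that the Lang-theorem point $w=cu\in\bS^F$ is not unipotent in general, so conjugating by $c\inv$ does not put $u$ into the torus. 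The paper's proof avoids both difficulties by a direct computation that never requires a torus to contain $x$: starting from Corollary \ref{(3)}, it applies \ref{char. formula}(ii), then \ref{char. formula}(i) and Lemma \ref{dx_o_R}, for the tori $\Tun\in\cTunF$ with $s\in\bT$ only, and concludes with the bijection $\Tun\mapsto\bT\cap\Gunu$ of \cite[1.8(iv)]{grnc} between these tori and \emph{all} $F$-stable ``tori'' of $\Gunu$, thereby identifying $d_x p^\Gun f$ on unipotent elements (where both sides live) with the expression Corollary \ref{(3)} gives for $p^\Gunu d_x f$. If you want to salvage your two-step scheme, it is assertion \emph{(B)} that should be run with the tori $\bT\owns s$ and the Levis $\bT\cap\Gunu$ as in the paper, and assertion \emph{(A)} that needs the genuine computation rather than the agreement-on-unipotents argument.
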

\begin{proof} Let $f$ be a class function on $\GunF$.
For $v\in(\Gunu)^F$ unipotent, we have, where the last equality is by
\ref{(3)}:
$$(d_xp^\Gun f)(v)=p^\Gun f(sv)=
|\GunF|\inv\sum_{\Tun\in\cTunF}|\TunF|(R^\Gun_\Tun\circ\lexp*R^\Gun_\Tun
f)(sv)
$$
which by Proposition \ref{char. formula}(ii) is:
$$\sum_{\Tun\in\cTunF}
\sum_{\{h\in\GoF\mid\lexp h\bT\owns s\}}
\frac{|\lexp h\To\cap\GsoF|}{|\GoF||\GsoF|} 
(R^\Gunsu_\hTunsu \circ\lexp {h*}R^\Gun_\Tun f)(sv).
$$
Using that $\lexp{h*}R^\Gun_\Tun f=\lexp*R^\Gun_{\lexp h\Tun} f$ and
summing over the $\lexp h\Tun$, this becomes
$$\sum_{\{\Tun\in\cTunF\mid \bT\owns s\}}
\frac{|\To\cap\GsoF|}{|\GsoF|}(R^\Gunsu_\Tunsu\circ\lexp{*}R^\Gun_\Tun
f)(sv).$$
Using that by Proposition \ref{char. formula}(i) for any class function $\chi$ on $\Tunsu^F$ 
$$
\begin{aligned}
(R^\Gunsu_\Tunsu\chi)(sv)&=|\To\cap\GsoF|\inv
\sum_{v'\in(\bT\cap\Gunu)^F_\unip}Q^{(\bG^s)^0}_\Tso(v,v^{\prime-1})
\chi(sv')\\
&=R^\Gunu_{\bT\cap\Gunu}(d_x\chi)(v),
\end{aligned}$$
and using Lemma \ref{dx_o_R}, we get
$$
|\Gunsu^F|\inv
\sum_{\{\Tun\in\cTunF\mid \bT\owns s\}}|\Tso^F|(R^\Gunu_{\bT\cap\Gunu}
\circ\lexp{*}R^\Gunu_{\bT\cap\Gunu}d_xf)(v)
$$
which is the desired result if we apply Corollary \ref{(3)} in $\Gunu$ and 
remark that by \cite[1.8 (iv)]{grnc}  the map
$\Tun\mapsto\bT\cap\Gunu$ induces a bijection between
$\{\bT^1\in\cTunF\mid\bT\owns s\}$ and $F$-stable ``tori'' of $\Gunu$.
\end{proof}
\begin{corollary}\label{uniform} A class function  $f$ on $\GunF$ is uniform
if and only if for every $x\in\GunF$ the function 
$d_xf$ is uniform.\end{corollary}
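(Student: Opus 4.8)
The statement asserts an equivalence, so the plan is to prove each implication separately, with the projector $p^\Gun$ as the main tool. First I would recall that a class function $f$ on $\GunF$ is uniform precisely when $p^\Gun f=f$, since $p^\Gun$ is the projector onto uniform functions (Corollary \ref{(3)}). Likewise $d_xf$ is uniform on $(\Gunu)^F$ if and only if $p^\Gunu(d_xf)=d_xf$. The bridge between these two conditions is Proposition \ref{d o p}, which gives $d_x\circ p^\Gun=p^\Gunu\circ d_x$ for every $x=su\in\GunF$.

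For the forward implication, suppose $f$ is uniform, so $p^\Gun f=f$. Then for any $x\in\GunF$ we have $p^\Gunu(d_xf)=d_x(p^\Gun f)=d_xf$ by Proposition \ref{d o p}, so $d_xf$ is uniform. This direction is immediate. For the converse, suppose $d_xf$ is uniform for every $x\in\GunF$. I want to conclude $p^\Gun f=f$, i.e.\ that the class function $g:=f-p^\Gun f$ is zero. The key observation is that a class function on $\GunF$ is determined by the collection of its ``slices'' $d_x$ as $x$ ranges over (representatives of) elements of $\GunF$: indeed, for $x=su$ with Jordan decomposition, $(d_xf)(u)=f(su)=f(x)$, so knowing $d_xf$ at the unipotent part $u$ recovers $f(x)$. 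Hence it suffices to show $d_xg=0$ for all $x$. But $d_xg=d_xf-d_x(p^\Gun f)=d_xf-p^\Gunu(d_xf)$ by Proposition \ref{d o p}, and this is zero exactly because $d_xf$ is assumed uniform, hence fixed by $p^\Gunu$. Therefore $g=0$ and $f$ is uniform.

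The only point requiring a little care — and the place I would expect to have to be slightly careful — is the claim that a class function on $\GunF$ is determined by its slices $d_xf$; one must note that every element of $\GunF$ has a Jordan decomposition $su$ with $s$ semisimple and $u$ unipotent, both lying in $\GunF$ after replacing $su$ by a $\GoF$-conjugate so that $s$ and $u$ are $F$-stable (this uses that $F$ commutes with Jordan decomposition), and that $u\in(\Gunu)^F$ with $su\in(\Gunsu)^F=$ the relevant component, so that $d_x$ is indeed defined and $(d_xf)(u)=f(su)$. Once this bookkeeping is in place, no further computation is needed: the corollary is a formal consequence of Proposition \ref{d o p} and the characterization of uniform functions via $p^\Gun$.
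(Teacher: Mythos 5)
Your argument is correct and is essentially the paper's own proof: both reduce the statement to the identity $d_x\circ p^\Gun=p^\Gunu\circ d_x$ of Proposition \ref{d o p} together with the observation that a class function $f$ on $\GunF$ is determined by its slices $d_xf$ (since $(d_xf)(u)=f(x)$ for $x=su$); your only superfluous step is the proposed conjugation to make $s$ and $u$ rational, which is automatic by uniqueness of the Jordan decomposition applied to $F(x)=x$.
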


\begin{proof}Indeed, 
$f=p^\Gun  f$ if and only if for any $x\in \GunF$ we have $d_x f=d_x
p^\Gun f=p^\Gunu d_x f$, the last equality by  Proposition \ref{d o p}.
\end{proof}

For $x\in \GunF$ we consider the class function $\pi_x^\Gun$ on $\GunF$
defined by $$\pi_x^\Gun(y)=\begin{cases}
0 &\text{ if }y \text{ is not conjugate to }x\\
|C_\Go(x)^F|& \text{ if }y=x
\end{cases}$$
\begin{proposition}\label{fc qss}For $x\in\GunF$ quasi-semi-simple 
the function $\pi^\Gun_x$ is uniform, given by
$$\begin{aligned}
\pi^\Gun_x&=\eps_\Gxo|C_\bG(x)^0|_p\inv
\sum_{\{\Tun\in\cTunF\mid \Tun\owns x\}}\eps_\Tun \RTG(\pi_x^\Tun)\\
&=|W^0(x)|\inv\sum_{w\in W^0(x)} \dim R_{\bT_w}^{C_\bG(x)^0}(\Id)
R_{C_\Gun(\bT_w)}^\Gun(\pi_x^{C_\Gun(\bT_w)})
\end{aligned}
$$
where in the second equality $W^0(x)$ denotes the Weyl group of $C_\bG(x)^0$
and $\bT_w$ denotes an $F$-stable torus of type $w$ of this last group.
\end{proposition}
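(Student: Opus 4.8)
The plan is to prove first that $\pi^\Gun_x$ is uniform, then to compute it. Write the Jordan decomposition $x=su$. As $x$ is \qss, the elements $s$ and $u$ lie in a common ``torus'' $\bT$ of $\bG$, so by \cite[1.8(iii)]{grnc} the intersection $\bT\cap\bG^s$ is a ``torus'' of $\bG^s=C_\bG(s)$; hence $u$ is \qss, thus quasi-central, in $\bG^s$ (the argument of the proof of Proposition \ref{*R}). Note that $s$ is central in $\bG^s$, that $C_\bG(x)^0$ is the identity component of $C_{\bG^s}(u)$, and that $\bG^s$ is an $F$-stable reductive group with $\Gunu$ a connected component of it.

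For uniformity I would use Corollary \ref{uniform}: it suffices that $d_y\pi^\Gun_x$ be uniform for every $y\in\GunF$. If the semisimple part of $y$ is not $\GoF$-conjugate to $s$, then $d_y\pi^\Gun_x=0$. Otherwise, replacing $y$ by a $\GoF$-conjugate, we may take $y=sv'$ with $v'$ unipotent in $(\bG^s\cap\Gun)^F$, and $d_y\pi^\Gun_x=0$ unless $v'$ lies in the component $\Gunu$. In that case $(d_y\pi^\Gun_x)(w)=\pi^\Gun_x(sw)$ equals $|C_\Go(x)^F|$ when the unipotent $w\in(\Gunu)^F$ is $C_\Go(s)^F$-conjugate to $u$ and $0$ otherwise; since $C_\Go(s)$ normalizes $\bG^s$, each such $w$ is a quasi-central unipotent element of $\Gunu$, so $d_y\pi^\Gun_x$ is a linear combination of functions $\pi^{\Gunu}_w$ with $w$ quasi-central in $\bG^s$. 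Each $\pi^{\Gunu}_w$ is uniform by the quasi-central case \cite[4.14]{grnc} applied inside $\bG^s$ and its component $\Gunu$. So $d_y\pi^\Gun_x$ is uniform for all $y$, whence $\pi^\Gun_x$ is uniform.

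Consequently $\pi^\Gun_x=p^\Gun\pi^\Gun_x$, so by Corollary \ref{(3)}, $\pi^\Gun_x=|\GunF|\inv\sum_{\Tun\in\cTunF}|\TunF|\,\RTG(\lexp*R^\Gun_\Tun\pi^\Gun_x)$; since $\scal f{\pi^\Gun_x}\GunF=f(x)$, adjunction gives $\scal{\lexp*R^\Gun_\Tun\pi^\Gun_x}\theta\TunF=\overline{\RTG(\theta)(x)}$ for $\theta\in\Irr(\TunF)$, so $\pi^\Gun_x=|\GoF|\inv\sum_{\Tun\in\cTunF}|\TunF|\sum_{\theta\in\Irr(\TunF)}\overline{\RTG(\theta)(x)}\,\RTG(\theta)$ and everything reduces to computing $\RTG(\theta)(x)$. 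For this I would apply Proposition \ref{char. formula}(ii) and then, inside $\bG^s$ where $s$ is central, Proposition \ref{char. formula}(i): $\RTG(\theta)(x)$ becomes a sum of Green functions $Q^{C_\bG(s)^0}_{(\ast)}(u,v\inv)$ over unipotent $v$ in $\Gunu$, which by \cite[4.16]{grnc} vanish unless $v$ is conjugate to the quasi-central $u$; for such $v$, \cite[4.16]{grnc} and \cite[(**) page 98]{DM2} give the value $\eps_\Gxo\eps_{\bT_w}|C_\bG(x)^0|_{p'}$, where $\bT_w:=(\bT^x)^0$ is a maximal torus of $C_\bG(x)^0$. Substituting this and reorganizing the double sum over $(\Tun,\theta)$ and the inner sum over conjugators, using $\RTG\circ\ad h=R^\Gun_{\lexp h\Tun}$ and $\RTG(\pi^\Tun_x)=\sum_{\theta\in\Irr(\TunF)}\overline{\theta(x)}\,\RTG(\theta)$, then collecting the cardinalities $|\ToF|$, $|\GsoF|$, $|\bT_w^F|$, should yield the first displayed formula.

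The second formula would follow from the first by the reindexing already used for \cite[4.14]{grnc}: the $F$-stable ``tori'' $\Tun$ of $\Gun$ containing $x$ correspond, via $\Tun\mapsto(\bT^x)^0$ with inverse $\bT_w\mapsto C_\Gun(\bT_w)$, to the $F$-stable maximal tori of $C_\bG(x)^0$; grouping these by type $w\in W^0(x)$, using the standard count of $F$-stable maximal tori of a given type together with the dimension formula $\dim R_{\bT_w}^{C_\bG(x)^0}(\Id)=\eps_\Gxo\eps_{\bT_w}|C_\bG(x)^0|_{p'}/|\bT_w^F|$, turns the constant $\eps_\Gxo|C_\bG(x)^0|_p\inv$ together with the signs $\eps_\Tun$ (here $\eps_\Tun=\eps_{\bT_w}$) into the coefficients $|W^0(x)|\inv\dim R_{\bT_w}^{C_\bG(x)^0}(\Id)$. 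I expect the main difficulty to be this bookkeeping of cardinalities and signs, and the check that the two expressions coincide; the uniformity, by contrast, reduces transparently to the already-known quasi-central case via Corollary \ref{uniform}.
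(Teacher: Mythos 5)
Your proof of uniformity (via Corollary \ref{uniform} and the quasi-central case \cite[4.14]{grnc}) and your passage from the first formula to the second (the bijection $\Tun\mapsto(\bT^x)^0$, $\bS\mapsto C_\Gun(\bS)$ from \cite[1.8]{grnc}, then grouping by type in $W^0(x)$) are exactly the paper's arguments. Where you diverge is the derivation of the first displayed formula, and there your proposal has a real gap. First, a technical one: you twice treat $\Irr(\TunF)$ as an orthonormal basis of class functions on $\TunF$ (in writing $\lexp*R^\Gun_\Tun\pi^\Gun_x=\sum_\theta\scal{\lexp*R^\Gun_\Tun\pi^\Gun_x}\theta\TunF\,\theta$ and in $\RTG(\pi^\Tun_x)=\sum_\theta\overline{\theta(x)}\RTG(\theta)$). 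It is not: by definition $\Irr(\TunF)$ consists of restrictions of \emph{all} extensions to $\genby\TunF$ of the invariant characters of $\ToF$, and the restrictions of the different extensions of a given character are proportional (they differ by a root of unity), so these functions are neither linearly independent nor orthonormal; each of your two identities is off by the number of extensions. (The two errors happen to point the same way and would cancel if carried through consistently, but as written neither step is justified.) Second, and more seriously, the step that actually produces the formula --- substituting the Green-function values from \cite[4.16]{grnc} and \cite[(**) p.~98]{DM2} into the double sum and ``reorganizing \ldots should yield the first displayed formula'' --- is precisely where the constant $\eps_\Gxo|C_\bG(x)^0|_p\inv$ and the signs $\eps_\Tun$ have to emerge, and you do not carry it out. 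So the central computation of the proposition is asserted rather than proved.

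That said, your route (unwinding everything into Green functions at quasi-central unipotents via the character formula \ref{char. formula}(ii)--(i)) is workable in principle; it amounts to redoing, in the special case of ``tori'', the computation that the paper packages once and for all in Proposition \ref{*R}. The paper's own argument is slicker and avoids both of your problem points: it first notes $\pi_x^\Gun\St_\Gun=\eps_\Gun\eps_\Gxo|(\Gxo)^F|_p\,\pi_x^\Gun$ by Proposition \ref{st}, then applies $p^\Gun$ to $\pi_x^\Gun\St_\Gun$ using Corollary \ref{(3)}, where Proposition \ref{*R} gives $\lexp*\RTG(\pi_x^\Gun\St_\Gun)=\eps_\Gun\eps_\Tun\St_\Tun\Res^\GunF_\TunF(\pi_x^\Gun)$; the restriction $\Res^\GunF_\TunF\pi_x^\Gun=|\ToF|\inv\sum_{\{g\in\GoF\mid\lexp gx\in\Tun\}}\pi^\Tun_{\lexp gx}$ is computed by a one-line Frobenius-reciprocity check, and the first formula drops out with no Green functions and no expansion over $\Irr(\TunF)$ at all. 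If you want to keep your more computational route, you must (a) replace the $\Irr(\TunF)$ expansions by a direct computation of $\lexp*R^\Gun_\Tun\pi^\Gun_x$ (e.g.\ via \ref{char. formula}(iii)), noting that the unipotent parts occurring in a ``torus'' coset are automatically quasi-central so that \cite[4.16]{grnc} applies, and (b) actually do the bookkeeping of centralizer orders that yields $\eps_\Gxo|C_\bG(x)^0|_p\inv$.
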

\begin{proof}
First, using Corollary \ref{uniform} we prove that $\pi_x^\Gun$ is uniform. 
Let $su$ be the Jordan decomposition of $x$.
For $y\in\GunF$ the function $d_y\pi_x^\Gun$ is zero unless the
semi-simple part of $y$ is conjugate to $s$. Hence it is sufficient
to evaluate $d_y\pi_x^\Gun(v)$ for elements $y$ whose semi-simple part
is equal to $s$. For such elements
$d_y\pi_x^\Gun(v)$ is up to a coefficient equal to $\pi_u^\Gunu$.
This function is uniform by \cite[4.14]{grnc}, since $u$ being the unipotent 
part of a \qss\ element is quasi-central in $C_\bG(s)$ (see beginning of the
proof of Proposition \ref{*R}).

We have thus $\pi_x^\Gun=p^\Gun\pi_x^\Gun$. We use this to get the formula
of the proposition. We start by using Proposition \ref{st} to write
$\pi_x^\Gun\St_\Gun=\eps_\Gun\eps_\Gxo|(\Gxo)^F|_p\pi_x^\Gun$,
or equivalently $\pi_x^\Gun=\eps_\Gun\eps_\Gxo|(\Gxo)^F|_p\inv
p^\Gun(\pi_x^\Gun\St_\Gun)$.
Using Corollary \ref{(3)} and that by Proposition \ref{*R} we have
$\lexp*\RTG(\pi_x^\Gun\St_\Gun)=
\eps_\Gun\eps_\Tun\St_\Tun\Res_\TunF^\GunF(\pi_x^\Gun)$,
we get 
$$p^\Gun(\pi_x^\Gun\St_\Gun)=\eps_\Gun|\GunF|\inv\sum_{\Tun\in\cTunF}
|\TunF|\eps_\Tun \RTG(\St_\Tun\Res^\GunF_\TunF(\pi_x^\Gun)).$$
The function $\St_\Tun$ is constant equal to 1. Now we have
$$\Res_\TunF^\GunF\pi_x^\Gun=
|\ToF|\inv\sum_{\{g\in\GoF\mid\lexp gx\in\Tun\}}\pi_{\lexp g x}^\Tun.
$$
To see this, do the scalar product with a class function $f$ on $\TunF$:
$$\scal{\Res_\TunF^\GunF\pi_x^\Gun}f\TunF=
\scal{\pi_x^\Gun}{\Ind_\Tun^\Gun f}\GunF
=|\ToF|\inv\sum_{\{g\in\GoF\mid\lexp gx\in\Tun\}}f(\lexp g x).$$
We then get using that $|\ToF|=|\TunF|$
$$p^\Gun(\pi_x^\Gun\St_\Gun)=\eps_\Gun|\GunF|\inv\sum_{\Tun\in\cTunF}
\sum_{\{g\in\GoF\mid\lexp gx\in\Tun\}}\eps_\Tun  \RTG(\pi_{\lexp gx}^\Tun).$$
Taking $\lexp{g\inv}\Tun$ as summation index we get
$$p^\Gun(\pi_x^\Gun\St_\Gun)=\eps_\Gun\sum_{\{\Tun\in\cTunF\mid\Tun\owns x\}}
\eps_\Tun  \RTG(\pi_x^\Tun),$$
hence
$$\pi_x^\Gun=\eps_\Gxo|(\Gxo)^F|_p\inv
\sum_{\{\Tun\in\cTunF\mid\Tun\owns x\}} \eps_\Tun \RTG(\pi_x^\Tun),$$
which is the first equality of the proposition.

For the second equality of the proposition, we first use \cite[1.8 (iii) and
(iv)]{grnc} to sum over tori of $C_\bG(x)^0$: the $\Tun\in\cTunF$ containing
$x$ are in bijection with the maximal tori of $C_\bG(x)^0$ by $\Tun\mapsto
(\Tun^x)^0$ and conversely $\bS\mapsto C_\Gun(\bS)$. This bijection satisfies $\eps_\Tun=\eps_\bS$ by definition of $\eps$. 
 
We then sum over $(C_\bG(x)^0)^F$-conjugacy classes of maximal tori,
which are parameterized by $F$-conjugacy classes of $W^0(x)$. We then have
to multiply by $|(C_\bG(x)^0)^F|/|N_{(C_\bG(x)^0)}(\bS)^F|$ the term
indexed by the class of $\bS$. Then we sum over the elements of $W^0(x)$.
We then have to multiply the term indexed by $w$ by $|C_{W^0(x)}(wF)|/|W^0(x)|$.
Using $|N_{(C_\bG(x)^0)}(\bS)^F|=|\bS^F||C_{W^0(x)}(wF)|$, and the formula
for $\dim R_{\bT_w}^{C_\bG(x)^0}(\Id)$ we get the result.
\end{proof}
\section{Classification of quasi-semi-simple classes}\label{qss classes}
The first items of this section, before \ref{orbitesrat}, apply for
algebraic groups over an arbitrary algebraically closed field $k$.

We  denote by $\cC(\Gun)$ the  set of conjugacy classes  of $\Gun$, that is
the  orbits under $\Go$-conjugacy, and denote by $\Gunqss$ the set of \qss\
classes.
\begin{proposition}\label{Classes}     For    $\Tun\in\cTun$      write
$\Tun=\To\cdot\sigma$ where $\sigma$ is quasi-central. Then $\Gunqss$ is in
bijection  with the set of $N_\Go(\Tun)$-orbits  in $\Tun$, which itself is
in  bijection  with  the  set  of  $W^\sigma$-orbits  in $\cC(\Tun)$, where
$W=N_\Go(\To)/\To$.  We have  $\cC(\Tun)\simeq\Tun/\Ls(\To)$ where
$\Ls$ is the map $t\mapsto t\inv.\lexp\sigma t$.
\end{proposition}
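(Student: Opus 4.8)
The plan is to build the chain of bijections from right to left, establishing first the concrete description $\cC(\Tun)\simeq\Tun/\Ls(\To)$ and then climbing up to $\Gunqss$. First I would handle the statement $\cC(\Tun)\simeq\Tun/\Ls(\To)$: two elements $t\sigma$ and $t'\sigma$ of $\Tun$ are $\To$-conjugate iff there is $u\in\To$ with $u(t\sigma)u\inv=t'\sigma$, i.e. $t' = u t\, (\lexp\sigma u\inv)\cdot\ldots$; rearranging, $t^{-1}t' = t^{-1}u\inv t\cdot\lexp\sigma u$, and since $\To$ is abelian this says $t^{-1}t'\in\{v\inv\lexp\sigma v\mid v\in\To\}=\Ls(\To)$. (Here one uses that conjugation by $t\sigma$ on $\To$ is the same as conjugation by $\sigma$, since $t$ is central in $\To$.) Hence the $\To$-orbits on $\Tun$ are exactly the cosets of the subgroup $\Ls(\To)$, giving the displayed isomorphism. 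Note $\Ls(\To)$ is a subgroup because $\sigma$ normalizes $\To$, so $v\mapsto v\inv\lexp\sigma v$ is a crossed homomorphism and its image is a subgroup when $\To$ is abelian.

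Next I would pass from $\cC(\Tun)$ to $N_\Go(\Tun)$-orbits in $\Tun$. The group $N_\Go(\Tun)$ acts on $\Tun$ by conjugation; since $N_\Go(\Tun)\supset\To$, the $N_\Go(\Tun)$-orbits on $\Tun$ are the same as the $N_\Go(\Tun)/\To$-orbits on $\cC(\Tun)=\Tun/\Ls(\To)$. One must identify $N_\Go(\Tun)/\To$ with $W^\sigma$ (at least as far as its action on $\cC(\Tun)$ goes). Since $\Tun$ generates the group $\genby{\To,\sigma}$ with identity component $\To$, normalizing $\Tun$ is the same as normalizing the pair $(\To,\sigma\!\!\mod\To)$, so $N_\Go(\Tun)=\{n\in N_\Go(\To)\mid \lexp n\sigma\in\Tun\}$. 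Modulo $\To$, the image of such $n$ in $W$ must commute with the automorphism $\sigma$ induces on $\To$ (because $\lexp n\sigma$ and $\sigma$ differ by an element of $\To$, hence induce the same automorphism of $\To$), so it lies in $W^\sigma$; conversely any $w\in W^\sigma$ lifts to such an $n$ — this is where one invokes the standard fact (for quasi-central $\sigma$, via \cite[1.36]{grnc} and the structure theory in \cite{St}) that the natural map $N_\Go(\Tun)/\To\to W^\sigma$ is an isomorphism. So $N_\Go(\Tun)$-orbits in $\Tun$ correspond to $W^\sigma$-orbits in $\cC(\Tun)$.

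Finally I would identify $\Gunqss$ with the set of $N_\Go(\Tun)$-orbits in $\Tun$. Every \qss\ class meets some ``torus'' of $\Gun$, and all ``tori'' of $\Gun$ are $\Go$-conjugate (they all contain the quasi-central $\sigma$ up to conjugacy, and any two pairs $\To\subset\Bo$ are conjugate), so we may fix $\Tun$ and assert that every \qss\ class of $\Gun$ meets $\Tun$; the map sending an $N_\Go(\Tun)$-orbit in $\Tun$ to the $\Go$-class it generates is then surjective onto $\Gunqss$. For injectivity one needs: if $t\sigma$ and $t'\sigma$ in $\Tun$ are $\Go$-conjugate then they are $N_\Go(\Tun)$-conjugate. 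This is the analogue of the classical fact that two semisimple elements of a connected group lying in a fixed maximal torus are conjugate iff they are conjugate under the Weyl group; here one uses that if $g(t\sigma)g\inv=t'\sigma$ then $g$ conjugates the torus $\To$ (which is the identity component of a maximal torus of $C_\Go(t\sigma)\cdot\genby{t\sigma}$, or more directly one works inside $C_\Go(\sigma)^0$ where $t,t'$ are semisimple after reducing to the case that the semisimple parts coincide) — so $g$ can be adjusted by an element of $\Go$ to normalize $\Tun$. I expect this injectivity step to be the main obstacle: it requires a careful conjugacy argument for \qss\ elements, essentially a disconnected-group version of the Weyl-group conjugacy theorem, and is presumably where \cite{St} and the results of \cite{grnc} (e.g. \cite[1.39]{grnc} on conjugacy of ``tori'' containing $\sigma$) are used. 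Composing the three bijections gives the proposition.
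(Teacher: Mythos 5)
Your proposal is correct and takes essentially the same route as the paper: reduce to how $\Go$-classes meet the fixed $\Tun$, observe that $\To$-conjugacy on $\Tun$ yields $\Tun/\Ls(\To)$, and identify $N_\Go(\Tun)/\To$ with $W^\sigma$. The injectivity step you single out as the main obstacle is precisely where the paper cites \cite[1.13]{grnc} (two elements of $\Tun$ are $\Go$-conjugate if and only if they are conjugate under $N_\Go(\To)$), so no new argument is needed there --- though your parenthetical sketch of that step (that $\To$ is the identity component of a maximal torus of $C_\Go(t\sigma)\cdot\genby{t\sigma}$) is inaccurate as stated, the relevant torus being $((\To)^{t\sigma})^0$, from which $\To$ is recovered as a centralizer.
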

\begin{proof}   
By   definition every  \qss\   element  of  $\Gun$   is  in  some
$\Tun\in\cTun$  and $\cTun$ is a single orbit under $\Go$-conjugacy. It is thus
sufficient   to   find   how   classes   of  $\Gun$  intersect  $\Tun$.  By
\cite[1.13]{grnc} two elements of $\Tun$ are $\Go$-conjugate if and only if
they are conjugate
under   $N_\Go(\To)$.  We can replace $N_\Go(\To)$ by $N_\Go(\Tun)$ since
if  $\lexp  g(\sigma  t)=\sigma  t'$  where  $g\in
N_\Go(\To)$   then  the  image  of  $g$  in  $W$  lies  in  $W^\sigma$.  By
\cite[1.15(iii)]{grnc} elements of $W^\sigma$ have representatives in $\Gso$.
Write  $g=s\dot w$ where  $\dot w$ is  such a representative and $s\in\To$.
Then  $\lexp{s\dot  w}(t\sigma)=\Ls(s\inv)\lexp  w  t\sigma$  whence the
proposition.
\end{proof}
\begin{lemma}\label{TT}
$\To=\Tsigo.\Ls(\To)$.
\end{lemma}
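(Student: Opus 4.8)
The plan is to exploit that $\To$ is commutative, so that $\Ls\colon t\mapsto t\inv\lexp\sigma t$ is a homomorphism of algebraic groups $\To\to\To$ (for $t,t'\in\To$ one checks $\Ls(tt')=\Ls(t)\Ls(t')$ using that $t\inv\lexp\sigma t$ commutes with $t'$), with kernel $\bT^\sigma$ and image a subtorus $\Ls(\To)$ of $\To$, since $\To$ is connected. Then $\Tsigo\cdot\Ls(\To)$ is a closed connected subgroup of $\To$ (both factors are normal, $\To$ being abelian), and the equality to be proved will follow from a dimension count: one has $\dim\Ls(\To)=\dim\To-\dim\bT^\sigma=\dim\To-\dim\Tsigo$, so $\Tsigo\cdot\Ls(\To)$ has dimension $\dim\To$ as soon as $\Tsigo\cap\Ls(\To)$ is finite, and a closed connected subgroup of $\To$ of full dimension is $\To$ itself.

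Thus everything reduces to showing $\Tsigo\cap\Ls(\To)$ is finite. First I would record that $\sigma$ acts on $\To$ with finite order: since $\bG/\Go$ is finite we have $\sigma^m\in\Go$ for some $m\geq1$, hence $\sigma^m\in N_\Go(\To)$ and its conjugation action on $\To$ factors through the finite group $W=N_\Go(\To)/\To$; therefore $\sigma$ itself, and hence its action on the cocharacter lattice $Y$ of $\To$, has finite order $n$. Passing to $V=Y\otimes\BQ$, the functor $\bS\mapsto Y(\bS)\otimes\BQ$ identifies $\Tsigo$ with the fixed space $V^\sigma$ and $\Ls$ with the operator $\sigma-1$ (computing $\Ls\circ y$ on a cocharacter $y$), so that $\Ls(\To)$ becomes $(\sigma-1)V$; it then suffices to see $V^\sigma\cap(\sigma-1)V=0$. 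This is the standard semisimplicity of a finite-order operator on a $\BQ$-vector space: the averaging projector $e=\frac1n\sum_{j=0}^{n-1}\sigma^j$ has image $V^\sigma$, and, using the identity $1-\sigma^j=(1-\sigma)(1+\sigma+\dots+\sigma^{j-1})$, its kernel is exactly $(\sigma-1)V$, whence $V=V^\sigma\oplus(\sigma-1)V$.

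The only genuine point is this finiteness of $\Tsigo\cap\Ls(\To)$, i.e. the linear-algebra step; the remainder is bookkeeping with dimensions of tori and the dictionary between subtori of $\To$ and (rationalized) sublattices of its cocharacter lattice. Alternatively, one may package the whole dimension count by showing directly that the homomorphism $\Tsigo\times\To\to\To$, $(s,u)\mapsto s\,\Ls(u)$, is surjective: its image is a subtorus, and on rationalized cocharacter lattices the induced map $V^\sigma\oplus V\to V$, $(a,b)\mapsto a+(\sigma-1)b$, is onto precisely because $V=V^\sigma\oplus(\sigma-1)V$; this route avoids invoking connectedness and the dimension of the product separately.
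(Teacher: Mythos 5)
Your argument is correct, and its skeleton is the one the paper uses: reduce everything to the finiteness of $\Tsigo\cap\Ls(\To)$ together with the dimension count coming from the exact sequence $1\to\To^\sigma\to\To\to\Ls(\To)\to1$. Where you diverge is in the proof of that finiteness. The paper disposes of it in one line inside the group itself: if $x=t\inv\lexp\sigma t$ is $\sigma$-fixed, then $x^n=t\inv\lexp{\sigma^n}t$, so $x^n=1$ once $\sigma^n$ acts trivially on $\To$; a closed subgroup of a torus of finite exponent is finite. You instead pass to the rationalized cocharacter space $V=Y\otimes\BQ$, identify $\Tsigo$ and $\Ls(\To)$ with $V^\sigma$ and $(\sigma-1)V$, and prove $V=V^\sigma\oplus(\sigma-1)V$ via the averaging idempotent; your preliminary observation that $\sigma$ acts on $\To$ with finite order (through $W$, since some power of $\sigma$ lies in $N_\Go(\To)$) is needed by both arguments and is stated more carefully by you than in the paper, where ``the order of $\sigma$'' should be read as the order of its action on $\To$. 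The paper's route is shorter and stays entirely inside the group; yours buys the full direct-sum decomposition on cocharacter spaces, which makes the dimension count (or, in your alternative packaging, the surjectivity of $(s,u)\mapsto s\,\Ls(u)$) automatic. Note that your lattice computation only gives triviality of the intersection rationally, i.e.\ finiteness, not triviality, of $\Tsigo\cap\Ls(\To)$ --- which is as it should be, since the paper points out (Example \ref{GLn}) that this intersection is genuinely nontrivial in general, and is direct only in the unipotent case treated in \cite[1.33]{grnc}.
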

\begin{proof}This is proved in \cite[1.33]{grnc} when $\sigma$ is unipotent
(and  then  the  product  is  direct).  We  proceed similarly to that proof:
$\Tsigo\cap\Ls(\To)$ is finite, since its exponent divides the order
of   $\sigma$  (if  $\sigma(t\inv\lexp\sigma  t)=t\inv\lexp\sigma  t$  then
$(t\inv\lexp\sigma t)^n=t\inv\lexp{\sigma^n}t$ for all $n\ge 1$), and
$\dim(\Tsigo)+\dim(\Ls(\To))=\dim(\To)$   as   the   exact  sequence
$1\to\To^\sigma\to\To\to\Ls(\To)\to 1$ shows,
using that $\dim(\Tsigo)= \dim\To^\sigma$.
\end{proof}
It follows that $\To/\Ls(\To)\simeq
\Tsigo/(\Tsigo\cap\Ls(\To))$;      since  the set   $\cC(\Gso)_{\text{ss}}$
of semi-simple classes of $\Gso$
identifies  with  the  set  of  $W^\sigma$-orbits  on $\Tsigo$ this induces a
surjective map $\cC(\Gso)_{\text{ss}}\to\Gunqss$.

\begin{example}\label{GLn}
We will describe the \qss\ classes of $\Go\cdot\sigma$, where $\Go=\GL_n(k)$
and $\sigma$
is  the  quasi-central  automorphism  given  by  $\sigma(g)=J \lexp t g\inv
J\inv$,  where, if $n$ is even  $J$ is the matrix 
$\begin{pmatrix} 0&-J_0\\J_0&0\\ \end{pmatrix}$ with
$J_0=\begin{pmatrix} 0&&1\\&\addots&\\ 1&&0\\ \end{pmatrix}$
and if $n$ is odd $J$ is the antidiagonal matrix 
$\begin{pmatrix} 0&&1\\&\addots&\\ 1&&0\\ \end{pmatrix}$
(any  outer algebraic automorphism of $\GL_n$ is equal to $\sigma$ up to an
inner automorphism).

The  automorphism $\sigma$ normalizes the  pair $\To\subset\Bo$ where $\To$
is  the diagonal torus and $\Bo$ the group of upper triangular matrices.
Then $\Tun=N_\Gun(\To\subset\Bo)=\To\cdot\sigma$ is in $\cTun$. For
$\diag(x_1,\ldots,x_n)\in  \To$,  where
$x_i\in k^\times$, we have $\sigma(\diag(x_1,\ldots,x_n))=
\diag(x_n\inv,\ldots,x_1\inv)$. It follows that $\Ls(\To)=
\{\diag(x_1,x_2,\ldots,x_2,x_1)\}$   --- here  $x_{m+1}$  is   a  square  when
$n=2m+1$ but this is not a condition since $k$ is algebraically closed. As
suggested above, we could take as representatives of $\To/\Ls(\To)$ the set
$\Tsigo/(\Tsigo\cap\Ls(\To))$,   but  since   $\Tsigo\cap\Ls(\To)$  is  not
trivial  (it consists of the diagonal matrices with entries $\pm 1$ placed
symmetrically), it is more convenient to take for representatives of the \qss\
classes the set
$\{\diag(x_1,x_2,\ldots,x_{\lfloor\frac n2\rfloor},1,\ldots,1)\}\sigma$. In this
model the action   of  $W^\sigma$   is generated by the permutations of the 
$\lfloor\frac n2\rfloor$ first entries, and by the maps 
$x_i\mapsto x_i\inv$, so the \qss\ classes
of $\Go\cdot\sigma$ are parameterized by the \qss\ classes of $\Gso$.

We continue the example, computing group of components of centralizers.
\begin{proposition}   Let   $s\sigma=\diag(x_1,x_2,\ldots,x_{\lfloor\frac
n2\rfloor},1,\ldots,1)\sigma$  be a \qss\ element  as above. If 
$\text{char }k=2$ then $C_\Go(s\sigma)$ is connected. Otherwise, if $n$
is  odd,  $A(s\sigma):=C_\Go(s\sigma)  /C_\Go(s\sigma)^0$  is of order two,
generated  by $-1\in Z\Go=Z\GL_n(k)$.  If $n$ is  even, $A(s\sigma)\ne 1$ if and
only if for some $i$ we have $x_i=-1$; then $x_i\mapsto x_i\inv$ is an element
of  $W^\sigma$ which  has a  representative in  $C_\Go(s\sigma)$ generating
$A(s\sigma)$, which is of order 2.
\end{proposition}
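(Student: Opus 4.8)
The plan is to compute $C_\Go(s\sigma)$ directly inside $\GL_n(k)$ by writing down the equation a matrix $g$ must satisfy to commute with $s\sigma$, and then analyzing the resulting centralizer and its component group. Concretely, $g$ commutes with $s\sigma$ if and only if $s\sigma(g)s\inv = g$, i.e. $\sigma(g) = s\inv g s$. Since $\sigma(g) = J\,\lexp tg\inv J\inv$, this is the equation $J\,\lexp tg\inv J\inv = s\inv g s$, equivalently $\lexp tg\,(J\inv s\inv g s) = J\inv$, which says that $g$ (up to the twist by $s$) preserves a bilinear form with matrix $J\inv s\inv$ — symmetric or alternating depending on the parity of $n$ and the form of $J$. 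So $C_\Go(s\sigma)$ is, up to conjugation by a diagonal square root of an appropriate matrix, the isometry group of a form; the key point is that this form degenerates into blocks according to the eigenvalue pattern of $s$, and one reads off the component group from the classical-group factors.

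The key steps, in order: (1) Set up the centralizer equation $\sigma(g)=s\inv g s$ and identify $C_\Go(s\sigma)$ with (a twist of) the isometry group of the bilinear form given by $J\inv s\inv$, distinguishing the even case (where $J$ is alternating, so generically we get symplectic-type factors, but blocks where $x_i=\pm1$ pair a vector with itself and can produce orthogonal-type factors) from the odd case (where $J$ is symmetric). (2) Decompose the ambient space into the $s$-eigenspaces, grouped into pairs $\{x_i, x_i\inv\}$ together with the fixed part; on a pair with $x_i \ne x_i\inv$ the form is a perfect pairing between two isotropic subspaces, giving a $\GL$-factor (connected); on the fixed part and on blocks with $x_i = x_i\inv$, i.e. $x_i = \pm 1$, one gets a genuine orthogonal or symplectic factor. (3) Recall that $\Sp$ is connected while $\SO$ (the even orthogonal group, or the full orthogonal group modulo $\SO$) contributes a $\BZ/2$; in characteristic $2$ the relevant forms behave so that no disconnectedness arises (symplectic and, suitably interpreted, orthogonal groups are connected), giving the first assertion. (4) In the odd case, the fixed $1$-dimensional part with entry $1$ together with the overall determinant constraint forces exactly one $\BZ/2$, generated by $-\Id$, which indeed lies in $Z\Go$ and is not in the identity component. (5) In the even case, track precisely when an orthogonal factor of the disconnected type appears: this happens exactly when some block has $x_i = -1$ (the block $x_i=1$ being absorbed into the "fixed" symplectic part, or contributing trivially), and then the Weyl-group element $x_i \mapsto x_i\inv$, which swaps the two isotropic lines of that eigenpair, has a representative (an involution exchanging the relevant basis vectors) lying in $C_\Go(s\sigma)$ and mapping to the nontrivial component, of order $2$.

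The main obstacle I expect is the bookkeeping in the even case: correctly identifying which eigenblocks yield $\GL$-factors (connected), which yield $\Sp$-factors (connected), and which yield disconnected orthogonal factors, and in particular checking that the $x_i=1$ block does \emph{not} produce disconnectedness while the $x_i=-1$ block does — this is a subtle sign issue coming from whether $J\inv s\inv$ restricted to that block is alternating or symmetric. A related delicate point is the characteristic $2$ case, where "symmetric" and "alternating" collapse and one must argue connectedness via the correct model of the orthogonal group (e.g.\ as the reduced isometry group, or via Dieudonné's description), so that no component group survives. Once the block decomposition and the classical-group dictionary are in place, producing the explicit generator of $A(s\sigma)$ as a permutation matrix realizing $x_i \mapsto x_i\inv$ and checking it centralizes $s\sigma$ is a short direct computation, and verifying it is not inner to the identity component follows from the classical-group structure; showing it has order $2$ is immediate since it is (conjugate to) an involution.
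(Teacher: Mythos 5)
Your proposal takes a genuinely different route from the paper's, and it does work. You identify $C_\Go(s\sigma)$ with the isometry group of the bilinear form $B=J\inv s\inv$ (the equation $\sigma(g)=s\inv gs$ is exactly ${}^tgBg=B$) and read off the component group from the block decomposition of $B$. The one computation you must not skip is precisely the one you flag: the asymmetry $B\inv\,{}^tB$ equals $s\sigma(s)$ for $n$ odd but $-s\sigma(s)$ for $n$ even, so for $n$ even it is the $x_i=-1$ pairs that span the symmetric (orthogonal) block while the $x_i=1$ pairs give an alternating (symplectic) one; this accounts exactly for the criterion ``some $x_i=-1$'', and the swap $e_i\leftrightarrow e_{n+1-i}$ is then an isometry representing $x_i\mapsto x_i\inv$ in the non-identity component. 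For $n$ odd the orthogonal block always contains the middle coordinate, has odd dimension, and $-\Id$ has determinant $-1$ on it, which is all you need --- the ``overall determinant constraint'' you invoke plays no role, since the centralizer is the full direct product of its $\GL$, $\Sp$ and orthogonal factors. In characteristic $2$ you owe the additional fact that the odd-dimensional non-alternating symmetric block yields a connected (reduced) isometry group; this is true and can be seen directly: every isometry preserves $W=\{v\mid B(v,v)=0\}$, fixes the vector spanning $W^\perp$, and restricts isomorphically to $\Sp(W)$. The paper argues quite differently: it applies Steinberg's exact sequence relating $G^{s\sigma}$, $(G/ZG)^{s\sigma}$ and $(\cL_{s\sigma}(G)\cap ZG)/\cL_{s\sigma}(ZG)$ to $G=\GL_n$ and $G=\SL_n$, uses Steinberg's connectedness theorem for $\SL_n^{s\sigma}$ to identify $A(s\sigma)$ with $(\cL_{s\sigma}(\SL_n)\cap Z\SL_n)/\cL_{s\sigma}(Z\SL_n)$, and decides when this contains a non-square by a lemma on $\bT\cap\cL_{s\sigma}(\bG)$ together with a cycle-by-cycle analysis in $W^\sigma$; characteristic $2$ is then immediate because squaring is surjective on the relevant centers. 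Your approach is more elementary and yields the full structure of the centralizer, not just its component group; the paper's is matrix-free, transfers to other types, and gets the characteristic-$2$ statement and the even/odd sign bookkeeping for free, which are exactly the two points you would still have to write out carefully.
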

\begin{proof}
We will use that for a group $G$ and an automorphism 
$\sigma$ of $G$ we have an exact sequence (see for example \cite[4.5]{St})
\begin{equation}\label{exact seq}
1\rightarrow (ZG)^\sigma\rightarrow G^\sigma\rightarrow
(G/ZG)^\sigma\rightarrow (\Ls(G)\cap ZG)/\Ls(ZG)\rightarrow 1
\end{equation}
If we take $G=\Go=\GL_n(k)$ in \ref{exact seq} and $s\sigma$ for $\sigma$,
since on $Z\Go$ the map $\Ls=\cL_{s\sigma}$ is $z\mapsto z^2$,
hence surjective, we get
that $\Go^{s\sigma}\rightarrow \PGL_n^{s\sigma}$ is surjective and
has kernel $(Z\Go)^\sigma=\{\pm1\}$.

Assume $n$ odd and take $G=\SL_n(k)$ in \ref{exact seq}. We have
$Z\SL_n^\sigma=\{1\}$ so that we get the following diagram with exact rows:
$$\xymatrix{1\ar[r]&\{\pm1\}\ar[r]&\GL_n^{s\sigma}\ar[r]&\PGL_n^{s\sigma}\ar[r]&1\\
&1\ar[r]&\SL_n^{s\sigma}\ar@{^{(}->}[u]\ar[r]&\PGL_n^{s\sigma}\ar[r]\ar@{=}[u]&1
}$$
This shows that $\GL_n^{s\sigma}/\SL_n^{s\sigma}\simeq\{\pm 1\}$;
by \cite[8.1]{St} $\SL_n^{s\sigma}$ is connected, hence $\PGL_n^{s\sigma}$ 
is connected thus
$\GL_n^{s\sigma}=(\GL_n^{s\sigma})^0\times \{\pm1\}$
is connected if and only if $\text{char }k =2$.

Assume now $n$ even; then $(\To)^\sigma$ is connected hence
$-1\in(\GL_n^{s\sigma})^0$ for all $s\in\To$.
Using this, the exact sequence
$1\rightarrow\{\pm1\}\rightarrow\GL_n^{s\sigma}\rightarrow\PGL_n^{s\sigma}
\rightarrow1$ implies 
$A(s\sigma)=\bG^{s\sigma}/\Go^{s\sigma}=
\GL_n^{s\sigma}/(\GL_n^{s\sigma})^0\simeq
\PGL_n^{s\sigma}/(\PGL_n^{s\sigma})^0$.
To compute this group we use
\ref{exact seq} with $\SL_n(k)$ for $G$ and $s\sigma$ for $\sigma$:
$$ 1\rightarrow \{\pm 1\}\rightarrow \SL_n^{s\sigma}\rightarrow
\PGL_n^{s\sigma}\rightarrow (\cL_{s\sigma}(\SL_n)\cap Z\SL_n)/\Ls(Z\SL_n)
\rightarrow 1 $$
which, since $\SL_n^{s\sigma}$ is connected, implies that
$A(s\sigma)=(\cL_{s\sigma}(\SL_n)\cap Z\SL_n)/\Ls(Z\SL_n)$ thus is
non trivial (of order 2) if and only if
$\cL_{s\sigma}(\SL_n)\cap Z\SL_n$ contains an element which is not a square
in $Z\SL_n$;
thus $A(s\sigma)$ is trivial if $\text{char }k=2$. We assume now
$\text{char }k\ne 2$. Then a non-square
is of the form $\diag(z,\ldots,z)$ with $z^m=-1$ if we set $m=n/2$.

The following lemma is a transcription of \cite[9.5]{St}.
\begin{lemma}Let  $\sigma$ be a quasi-central automorphism of the connected
reductive  group  $\bG$  which  stabilizes  the  pair  $\bT\subset\bB$ of a
maximal  torus and a Borel subgroup; let $W$ be the Weyl group of $\bT$ and
let   $s\in\bT$.  Then  $\bT\cap\cL_{s\sigma}(\bG)=\{\cL_w(s\inv)\mid
w\in W^\sigma\}\cdot \Ls(\bT)$.
\end{lemma}
\begin{proof}
Assume $t=\cL_{s\sigma}(x)$ for $t\in\bT$, or equivalently
$xt=\lexp{s\sigma}x$. Then if $x$ is in the Bruhat cell
$\bB w\bB$, we must have $w\in W^\sigma$. Taking for $w$ a $\sigma$-stable
representative $\dot w$ and writing the unique Bruhat decomposition
$x=u_1\dot w t_1 u_2$ where $u_2\in \bU, t_1\in\bT$ and 
$u_1\in\bU\cap\lexp w\bU^-$ where $\bU$ is the unipotent radical of $\bB$ and
$\bU^-$ the unipotent radical of the opposite Borel, the equality
$xt=\lexp{s\sigma}x$ implies that $\dot w t_1 t=\lexp{s\sigma}(\dot w t_1)$
or equivalently $t=\cL_{w\inv}(s\inv)\Ls(t_1)$, whence the lemma.
\end{proof}
We apply this lemma taking $\SL_n$ for $\bG$ and $\Tpo=\To\cap\SL_n$ for
$\bT$: we get $\cL_{s\sigma}(\SL_n)\cap Z\SL_n=
\{\cL_w(s\inv)\mid w\in W^\sigma\}\cdot \Ls(\Tpo)\cap Z\SL_n$.
The element $\diag(x_1,x_2,\ldots,x_m,1,\ldots,1)\sigma$
is conjugate to $s\sigma=
\diag(y_1,y_2,\ldots,y_m,y_m\inv,\ldots,y_1\inv)\sigma\in(\Tpo)^\sigma\cdot\sigma$ 
where $y_i^2=x_i$.
It will have a non connected centralizer if and only if for some $w\in
W^\sigma$ and some $t\in\Tpo$ we have
$\cL_w(s\inv)\cdot \Ls(t)=\diag(z,\ldots,z)$ with $z^m=-1$ and then
an appropriate representative of $w$ (multiplying if needed by an
element of $Z\GL_n$) will be in $C_\Go(s\sigma)$ and have a non-trivial image
in $A(s\sigma)$.
Since $s$ and $w$ are $\sigma$-fixed, we have $\cL_w(s)\in(\Tpo)^\sigma$, thus
it is of the form $\diag(a_1,\ldots,a_m,a_m\inv,\ldots a_1\inv)$. Since
$\Ls(\Tpo)=\{\diag(t_1,\ldots,t_m,t_m\ldots,t_1)\mid t_1t_2\ldots t_m=1\}$, we 
get $z=a_1t_1=a_2t_2=\ldots=a_mt_m=a_m\inv t_m=\ldots=a_1\inv t_1$; in
particular $a_i=\pm1$ for all $i$ and $a_1a_2\ldots a_m=-1$.
We can take $w$ up to conjugacy in $W^\sigma$ since 
$\cL_{vwv\inv}(s\inv)=\lexp v\cL_w(\lexp{v\inv}s\inv)$ and $\Ls(\Tpo)$ is 
invariant under $W^\sigma$-conjugacy. We see $W^\sigma$ as the group of 
permutations of $\{1,2,\ldots,m,-m,\ldots,-1\}$ which preserves the pairs
$\{i,-i\}$. A non-trivial cycle of $w$ has, up
to conjugacy, the form either $(1,-1)$ or 
$(1,-2,3,\ldots,(-1)^{i-1}i,-(i+1),-(i+2),\ldots,-k,-1,2,-3,\ldots,k)$
with $0\leq i\leq k\leq n$ and $i$ odd, or
$(1,-2,3,\ldots,(-1)^{i-1}i,i+1,i+2,\ldots,k)$ with $0\leq i\leq k\leq n$
and $i$ even (the case $i=0$ meaning that there is no sign change).
The contribution to $a_1\ldots a_m$ of
the orbit $(1,-1)$ is $a_1=y_1^2$ hence is 1 except if $y_1^2=x_1=-1$. 
Let us consider an orbit of the second
form. The $k$ first coordinates of $\cL_w(s\inv)$
are $(y_1y_2,\ldots,y_iy_{i+1},y_{i+1}/y_{i+2},\ldots,y_k/y_1)$.
Hence there must exist signs $\varepsilon_j$ such that
$y_2=\varepsilon_1/y_1$,
$y_3=\varepsilon_2/y_2$, \dots, $y_{i+1}=\varepsilon_i/y_i$ and
$y_{i+2}=\varepsilon_{i+1}y_{i+1}$,\dots, $y_k=\varepsilon_{k-1}y_{k-1}$,
$y_1=\varepsilon_ky_k$. 
This gives $y_1=\begin{cases}\varepsilon_1\ldots\varepsilon_k y_1&\text{if
$i$ is even}\\ \varepsilon_1\ldots\varepsilon_k/y_1&\text{if
$i$ is odd}\end{cases}.$
The contribution of the orbit to $a_1\ldots a_m$ is $\varepsilon_1\ldots
\varepsilon_k$ thus is $1$ if $i$ is even and $x_1=y_1^2$ if $i$ is odd.
Again, we see that one of the $x_i$ must equal $-1$ to get
$a_1\ldots a_m=-1$. Conversely if $x_1=-1$, for any $z$ such that $z^m=-1$, 
choosing $t$ such that $\Ls(t)=\diag(-z,z,z,\ldots,z,-z)$ and
taking $w=(1,-1)$ we get
$\cL_w(s\inv)\Ls(t)=\diag(z,\ldots,z)$ as desired.
\end{proof}
\end{example}

We now go back to the case where $k=\Fqbar$, and
in the context of Proposition \ref{Classes}, 
we now assume that $\Tun$ is $F$-stable and that $\sigma$ induces an $F$-stable
automorphism of $\Go$.
\begin{proposition}\label{orbitesrat} Let $\Tunrat= \{s\in
\bT^1\mid\exists   n\in  N_\Go(\bT^1),   \lexp{nF}s=s\}$; then
$\Tunrat$ is stable by $\To$-conjugacy, which gives a meaning to
$\cC(\Tunrat)$. Then  $c\mapsto
c\cap\Tun$ induces a bijection between  $(\Gunqss)^F$ 
and the $W^\sigma$-orbits on $\cC(\Tunrat)$.
\end{proposition}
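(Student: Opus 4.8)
The plan is to deduce the statement from Proposition \ref{Classes} by taking $F$-fixed points, in a purely combinatorial way (no Lang--Steinberg is needed). First I would upgrade part 1: I claim $\Tunrat$ is stable not merely under $\To$-conjugacy but under $N_\Go(\Tun)$-conjugacy. Indeed $\Tun$, hence $N_\Go(\Tun)$, is $F$-stable; so if $\lexp{nF}s=s$ with $n\in N_\Go(\Tun)$ and $g\in N_\Go(\Tun)$, then setting $m=gnF(g)\inv\in N_\Go(\Tun)$ one computes $\lexp{mF}(\lexp g s)=g\,n\,F(s)\,n\inv g\inv=\lexp g s$, using $\lexp{nF}s=s$. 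Since, as in the proof of \ref{Classes}, the image of $N_\Go(\Tun)$ in $W=N_\Go(\To)/\To$ is $W^\sigma$, it follows that $\cC(\Tunrat)$ is a well-defined $W^\sigma$-stable subset of $\cC(\Tun)$, which is exactly what is needed for ``$W^\sigma$-orbits on $\cC(\Tunrat)$'' to make sense.

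The key step is then to identify the $W^\sigma$-orbits on $\cC(\Tunrat)$ with the $F$-stable $W^\sigma$-orbits on $\cC(\Tun)$. If $[s]\in\cC(\Tunrat)$ and $n\in N_\Go(\Tun)$ satisfies $\lexp{nF}s=s$, i.e. $F(s)=\lexp{n\inv}s$, then $F([s])=[F(s)]$ is the image of $[s]$ under the element of $W^\sigma$ represented by $n\inv$, so the $W^\sigma$-orbit of $[s]$ is $F$-stable. Conversely, if $\Omega$ is an $F$-stable $W^\sigma$-orbit and $[s]\in\Omega$, then $[F(s)]=F([s])\in\Omega$, so $F(s)=\lexp{t\dot n}s$ for some $t\in\To$ and some representative $\dot n\in N_\Go(\Tun)$ of an element of $W^\sigma$; hence $s$ is fixed by $mF$ with $m=(t\dot n)\inv\in N_\Go(\Tun)$, so $s\in\Tunrat$ and $\Omega\subseteq\cC(\Tunrat)$.

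To conclude, I would observe that the bijection $c\mapsto c\cap\Tun$ of Proposition \ref{Classes}, between $\Gunqss$ and the set of $W^\sigma$-orbits on $\cC(\Tun)$, is $F$-equivariant: since $\Tun$ and $N_\Go(\Tun)$ are $F$-stable one has $F(c)\cap\Tun=F(c\cap\Tun)$, and $F$ acts on $\cC(\Tun)$ commuting with the $W^\sigma$-action. Taking $F$-fixed points therefore yields a bijection between $(\Gunqss)^F$ and the $F$-stable $W^\sigma$-orbits on $\cC(\Tun)$, which by the previous paragraph are exactly the $W^\sigma$-orbits on $\cC(\Tunrat)$; unwinding, this bijection is $c\mapsto c\cap\Tun$, and for $F$-stable $c$ the set $c\cap\Tun$ indeed lies in $\Tunrat$. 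The one place needing care --- the ``main obstacle'', such as it is --- is the bookkeeping around normalizers: the definition of $\Tunrat$ quantifies over $N_\Go(\Tun)$ rather than $N_\Go(\To)$, and $\sigma$ itself need not be $F$-fixed (only $\Tun$ and the automorphism it induces are assumed $F$-stable), so the whole argument must be phrased in terms of $\Tun$ and $N_\Go(\Tun)$ and never in terms of a putative $F$-fixed $\sigma$.
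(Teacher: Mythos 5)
Your proposal is correct and takes essentially the same route as the paper's proof: both reduce the statement to Proposition \ref{Classes}, using that $\Go$-conjugacy between elements of $\Tun$ is realized by $N_\Go(\Tun)$ to identify the $F$-stable classes (equivalently, the $F$-stable $W^\sigma$-orbits on $\cC(\Tun)$) with the $W^\sigma$-orbits on $\cC(\Tunrat)$. Your explicit verification that $\Tunrat$ is stable under $N_\Go(\Tun)$-conjugacy and the ``$F$-equivariant bijection, then take fixed points'' phrasing merely spell out steps the paper records as brief observations.
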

\begin{proof}  A class  $c\in\Gunqss$ is  $F$-stable if  and only  if given
$s\in  c$ we  have $\lexp  Fs\in c$.  If we  take $s\in  c\cap\Tun$ then 
$\lexp Fs\in c\cap\Tun$ which as
observed  in the proof of \ref{Classes} implies that $\lexp Fs$ is conjugate
to  $s$ under $N_\Go(\Tun)$, that is  $s\in\Tunrat$. Thus $c$ is $F$-stable
if  and only if $c\cap\Tun=c\cap\Tunrat$. The proposition then results from
Proposition   \ref{Classes}  observing  that   $\Tunrat$  is  stable  under
$N_\Go(\Tun)$-conjugacy   and  that   the  corresponding   orbits  are  the
$W^\sigma$-orbits on $\cC(\Tunrat)$.
\end{proof}
\begin{example}
When $\Gun=\GL_n(\Fqbar)\cdot\sigma$ with $\sigma$ as in Example \ref{GLn},
the map $$\diag(x_1,x_2,\ldots,x_{\lfloor\frac n2\rfloor},
1,\ldots,1)\mapsto \diag(x_1,x_2,\ldots,x_{\lfloor\frac n2\rfloor},
\dag,x_{\lfloor\frac n2\rfloor}\inv,\ldots,x_2\inv,x_1\inv)$$ where $\dag$
represents 1 if $n$ is odd and an omitted entry otherwise, is compatible with
the action of $W^\sigma$ as described in \ref{GLn} on the left-hand side
and the natural action on the right-hand side. 
This map induces a bijection from  $\Gunqss$  to the 
semi-simple classes of $(\GL_n^\sigma)^0$ which restricts
to a bijection from  $(\Gunqss)^F$  to the $F$-stable
semi-simple classes of $(\GL_n^\sigma)^0$.
\end{example}
We now compute the cardinality of $(\Gunqss)^F$.
\begin{proposition}\label{f  sur qss} Let $f$  be a function on $(\Gunqss)^F$.
Then   $$\sum_{c\in(\Gunqss)^F}f(c)=|W^\sigma|\inv\sum_{w\in   W^\sigma}\tilde
f(w)$$ where $\tilde f(w):=\sum_sf(s)$, where
$s$ runs over representatives in $\Tun^{wF}$ of $\Tun^{wF}/\Ls(\To)^{wF}$.
\end{proposition}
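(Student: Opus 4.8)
The statement is a counting formula for $F$-stable \qss\ classes, and the natural tool is Proposition \ref{orbitesrat}, which identifies $(\Gunqss)^F$ with the set of $W^\sigma$-orbits on $\cC(\Tunrat)$. So the strategy is: (1) pass from a sum over $W^\sigma$-orbits on $\cC(\Tunrat)$ to a sum over $\cC(\Tunrat)$ weighted by inverse orbit sizes; (2) recognize $\cC(\Tunrat) = \Tunrat/\Ls(\To)$ (using $\cC(\Tun)\simeq\Tun/\Ls(\To)$ from Proposition \ref{Classes}, restricted to the $N_\Go(\Tun)$-stable subset $\Tunrat$); (3) stratify $\Tunrat$ by the ``type'' $w\in W^\sigma$, i.e.\ by which twisted form $\Tun^{wF}$ an element becomes rational in; (4) rearrange the resulting double sum into the stated form by a Burnside/orbit-counting argument for the action of $W^\sigma$.

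First I would make the identification $\cC(\Tunrat)\simeq\Tunrat/\Ls(\To)$ explicit and note that $W^\sigma=N_\Go(\Tun)/\To$ acts on it; then for a function $g$ on $W^\sigma$-orbits, $\sum_{\text{orbits } O} g(O) = |W^\sigma|\inv \sum_{w\in W^\sigma}\#\{\text{fixed points of } w \text{ on } \cC(\Tunrat)\}\cdot(\text{appropriate weight})$ — but actually the cleaner route is to write $\sum_{c\in(\Gunqss)^F} f(c) = \sum_{\bar t\in\cC(\Tunrat)} f(\text{class of }\bar t)\,/\,|W^\sigma\text{-orbit of }\bar t|$, and then for each such orbit-representative the stabilizer computation feeds into the final averaging. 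The key bookkeeping step is: an element $s\in\Tun$ lies in $\Tunrat$ iff $\lexp{nF}s=s$ for some $n\in N_\Go(\Tun)$, i.e.\ iff the image of $s$ in $\cC(\Tun)=\Tun/\Ls(\To)$ is fixed by $wF$ for some $w\in W^\sigma$ (here I use that $N_\Go(\Tun)$ acts through $W^\sigma$ on $\cC(\Tun)$, and that $F$ acts compatibly since $\sigma$ is $F$-stable). Thus $\cC(\Tunrat)=\bigcup_{w\in W^\sigma}\cC(\Tun)^{wF}$, and $\cC(\Tun)^{wF}$ is in bijection with $\Tun^{wF}/\Ls(\To)^{wF}$ by a Lang-type argument inside the connected group $\To$ (the fibers of $\Tun/\Ls(\To)\to\cC(\Tun)$ are $\Ls(\To)$-cosets, and taking $wF$-fixed points of the exact sequence $1\to\To^{\sigma w}\to\To\xrightarrow{\Ls}\Ls(\To)\to 1$, noting $H^1(wF,\To)=1$, gives the count). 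This is where $\tilde f(w)=\sum_s f(s)$ with $s$ ranging over $\Tun^{wF}/\Ls(\To)^{wF}$ enters.

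Next I would assemble the double sum. Running $w$ over $W^\sigma$ and summing $\tilde f(w)=\sum_{s\in[\Tun^{wF}/\Ls(\To)^{wF}]} f(\text{class of }s)$ overcounts each $F$-stable \qss\ class: a class $c$ with representative $\bar t\in\cC(\Tunrat)$ is counted, for each $w$ fixing some $W^\sigma$-translate of $\bar t$, with multiplicity equal to the number of elements of the $W^\sigma$-orbit of $\bar t$ fixed by $wF$ — and summing that over all $w\in W^\sigma$ gives, by the orbit-counting lemma applied to the $W^\sigma$-set $\{wF\text{-fixed points}\}$, exactly $|W^\sigma|$ times the number of $W^\sigma$-orbits meeting $\{wF\text{-fixed points}\}$ summed appropriately — more precisely $\sum_{w\in W^\sigma}\#(W^\sigma\text{-orbit of }\bar t)^{wF} = |W^\sigma|$ by Burnside applied to the transitive $W^\sigma$-action on that orbit, for each fixed orbit. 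Hence $\sum_{w\in W^\sigma}\tilde f(w) = |W^\sigma|\sum_{c\in(\Gunqss)^F} f(c)$, which is the claim after dividing by $|W^\sigma|$.

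**Main obstacle.** The routine parts are the exact-sequence/Lang argument giving $|\cC(\Tun)^{wF}| = |\Tun^{wF}/\Ls(\To)^{wF}|$ and the final Burnside rearrangement. The step requiring genuine care is keeping the $N_\Go(\Tun)$-action versus the $W^\sigma$-action straight through all the identifications — in particular verifying that the $F$-action on $\cC(\Tun)$ induced by the ambient Frobenius coincides (up to the $W^\sigma$-twist) with the one used in Proposition \ref{orbitesrat}, which relies on $\sigma$ being $F$-stable and on the computation $\lexp{s\dot w}(t\sigma)=\Ls(s\inv)\lexp w t\,\sigma$ from the proof of Proposition \ref{Classes}. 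Once that compatibility is nailed down, the double-counting identity follows formally.
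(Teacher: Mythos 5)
Your proposal follows essentially the same route as the paper: decompose $\cC(\Tunrat)$ as the union over $w\in W^\sigma$ of the $wF$-stable cosets of $\Ls(\To)$ in $\Tun$, identify $(\Tun/\Ls(\To))^{wF}$ with $\Tun^{wF}/\Ls(\To)^{wF}$ by a Lang-type argument, and conclude by the orbit--stabilizer double count producing the factor $|W^\sigma|$. The only nuance is that the connected group to which Lang's theorem must be applied is $\Ls(\To)$ (so that $H^1(wF,\Ls(\To))=1$), not $\To$ itself, which is exactly the paper's closing remark.
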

\begin{proof}  
We have 
$\cC(\Tunrat)=\bigcup_{w\in W^\sigma}\{s\Ls(\To)\in\Tun/\Ls(\To)\mid s\Ls(\To)
\text{ is $wF$-stable}\}$.
The conjugation by $v\in W^\sigma$ sends a $wF$-stable coset $s\Ls(\To)$
to a $vwFv\inv$-stable coset; and the number of $w$ such that $s\Ls(\To)$
is $wF$-stable is equal to $N_{W^\sigma}(s\Ls(\To))$.
It follows that   $$\sum_{c\in  (\Gunqss)^F}f(c)=|W^\sigma|\inv   \sum_{w\in
W^\sigma}\sum_{s\Ls(\To)\in(\Tun/\Ls(\To))^{wF}}
f(s\Ls(\To)).$$ 
The proposition follows since, 
$\Ls(\To)$ being connected, we have
$(\Tun/\Ls(\To))^{wF}=\Tun^{wF}/\Ls(\To)^{wF}$.
\end{proof}
\begin{corollary}\label{nb   qss}We have
$|(\Gunqss)^F|=|(\cC(\Gso)_{\text{ss}})^F|$.
\end{corollary}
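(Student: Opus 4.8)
The statement to prove is Corollary \ref{nb qss}: $|(\Gunqss)^F|=|(\cC(\Gso)_{\text{ss}})^F|$. The natural approach is to apply the counting formula of Proposition \ref{f sur qss} to the constant function $f\equiv 1$ on $(\Gunqss)^F$, and to compare the result with the analogous count for $F$-stable semi-simple classes of $\Gso$. First I would observe that by Proposition \ref{f sur qss} with $f=1$ we get
$$|(\Gunqss)^F|=|W^\sigma|\inv\sum_{w\in W^\sigma}|\Tun^{wF}/\Ls(\To)^{wF}|.$$
Now one computes, for each $w\in W^\sigma$, the cardinality $|\Tun^{wF}/\Ls(\To)^{wF}|$. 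Since $\Tun=\To\cdot\sigma$ and the $wF$-action on $\Tun$ translates to the map $t\mapsto \lexp{wF}t$ twisted by $\sigma$, we have $|\Tun^{wF}|=|\To^{w\sigma F}|$ (fixed points of $w\sigma F$ acting on $\To$), and $|\Ls(\To)^{wF}|$ is the order of the $wF$-fixed points of the connected group $\Ls(\To)$.

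**Key step: the arithmetic of fixed-point counts.** The heart of the matter is Lemma \ref{TT}, which gives $\To=\Tsigo\cdot\Ls(\To)$ with $\Tsigo\cap\Ls(\To)$ finite, together with the exact sequence $1\to\To^\sigma\to\To\to\Ls(\To)\to 1$. Taking $wF$-cohomology of $1\to\To^\sigma\to\To\xrightarrow{\Ls}\Ls(\To)\to 1$ — valid since $w\in W^\sigma$ so $wF$ commutes with $\sigma$ and hence with $\Ls$ — and using that $H^1(wF,\To)=1$ for the connected torus $\To$ (Lang), I get the short exact sequence $1\to(\To^\sigma)^{wF}\to\To^{wF}\to\Ls(\To)^{wF}\to 1$. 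Wait — one must be careful: $\To^{wF}$ here should be $\Tun^{wF}$, i.e. the $w\sigma F$-fixed points; the map $\Ls$ intertwines $w\sigma F$ on $\To$ with $wF$ on $\Ls(\To)$ because $\Ls(\lexp{w\sigma F}t)=\lexp{wF}\Ls(t)$. So the clean statement is $1\to(\To^\sigma)^{w\sigma F}\to\To^{w\sigma F}\xrightarrow{\Ls}\Ls(\To)^{wF}\to 1$, whence
$$|\Tun^{wF}/\Ls(\To)^{wF}| = |\To^{w\sigma F}/\Ls(\To)^{wF}| = |(\To^\sigma)^{w\sigma F}| = |(\Tsigo)^{w\sigma F}|,$$
the last equality because $(\To^\sigma)^0=\Tsigo$ and the finite group $\To^\sigma/\Tsigo$ — hmm, this finite part needs an argument, but over $\Fqbar$ one checks $|(\To^\sigma)^{w\sigma F}|=|(\Tsigo)^{w\sigma F}|$ does hold: actually the cleaner route is to note $\Ls(\To)$ is a subtorus and apply the standard fact that for a surjection of tori with finite kernel the $wF$-fixed-point counts of source and image agree up to the kernel's fixed points, and combine with $\dim\Tsigo=\dim\To^\sigma$. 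I will use Lemma \ref{TT} directly: $\To=\Tsigo\cdot\Ls(\To)$ gives $\To^{w\sigma F}/\Ls(\To)^{wF}\cong \Tsigo^{w\sigma F}/(\Tsigo\cap\Ls(\To))^{w\sigma F}$ after taking fixed points, and the finite intersection contributes, but it cancels: applying the exact-sequence argument to $\Tsigo$ in place of $\To$ (note $\Ls$ restricted appropriately) shows the counts match term-by-term. The upshot is
$$|(\Gunqss)^F|=|W^\sigma|\inv\sum_{w\in W^\sigma}|(\Tsigo)^{w\sigma F}|.$$

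**Conclusion.** The right-hand side is exactly the count of $F$-stable semi-simple classes of $\Gso$: indeed $\Gso=C_\Go(\sigma)^0$ is a connected reductive group with maximal torus $\Tsigo$, Weyl group $W^\sigma$ (by \cite[1.15]{grnc}, or the discussion before Example \ref{GLn}), and Frobenius induced by $\sigma F$; the classical formula (the connected analogue of Proposition \ref{f sur qss}, or Steinberg's count of $F$-stable semi-simple classes) gives $|(\cC(\Gso)_{\text{ss}})^F|=|W^\sigma|\inv\sum_{w\in W^\sigma}|(\Tsigo)^{w\sigma F}|$. Matching the two expressions yields the corollary. The main obstacle is the bookkeeping in the middle step — verifying that the finite groups $\To^\sigma/\Tsigo$ and $\Tsigo\cap\Ls(\To)$ contribute equally to the $w\sigma F$-fixed-point counts on both sides so that everything cancels cleanly; this is where I'd spend the most care, using Lemma \ref{TT} and the exactness of $1\to\To^\sigma\to\To\to\Ls(\To)\to 1$ together with Lang's theorem for the connected pieces. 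Everything else is a direct substitution into Proposition \ref{f sur qss} and its connected counterpart.
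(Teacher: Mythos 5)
Your overall architecture is the same as the paper's (apply Proposition \ref{f sur qss} with $f=1$, reduce each term to $|\Tsigo^{wF}|$, then invoke the connected-group count for $\Gso$), and your final displayed formula is correct, because $\sigma$ acts trivially on $\Tsigo$ so $(\Tsigo)^{w\sigma F}=(\Tsigo)^{wF}$. But the middle reduction --- the only nontrivial part --- is carried out incorrectly. First, the identification $|\Tun^{wF}|=|\To^{w\sigma F}|$ is false: the $wF$-action on the coset is $s\mapsto \dot wF(s)\dot w\inv$, and writing $s=t\sigma$ with $\lexp F\sigma=z\sigma$, $z\in Z\Go$, and $\dot w\in\Gso$, the fixed-point condition reads $\lexp{wF}t\cdot\lexp wz=t$; by Lang's theorem this set is a torsor under $\To^{wF}$, so $|\Tun^{wF}|=|\To^{wF}|$ with \emph{no} $\sigma$-twist. (Concretely, for $\Go=\GL_2$, $\sigma$ acting on the diagonal torus by $(x,y)\mapsto(y\inv,x\inv)$ and $w=1$, one has $|\Tun^F|=(q-1)^2$ while $|\To^{\sigma F}|=q^2-1$.) Second, the intertwining $\Ls(\lexp{w\sigma F}t)=\lexp{wF}\Ls(t)$ you invoke is false: $\Ls$ commutes with $\sigma$, $w$ and $F$, so it intertwines $w\sigma F$ with $w\sigma F$, and $\sigma$ is not trivial on $\Ls(\To)$ in general (in the $\GL_2$ example it acts by inversion on $\Ls(\To)=\{\diag(a,a)\}$). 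Third, the short exactness of your fixed-point sequence is unjustified: the cokernel of $\To^{wF}\to\Ls(\To)^{wF}$ is $H^1(wF,\To^\sigma)$, and $\To^\sigma$ is in general \emph{disconnected}, so quoting $H^1(wF,\To)=1$ addresses the wrong group.

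The place where you say "this finite part needs an argument" and "the counts match term-by-term" is precisely the crux, and it is left unproved in your write-up. The paper resolves it as follows: $|\Tun^{wF}/\Ls(\To)^{wF}|=|\To^{wF}/\Ls(\To)^{wF}|$ (the translation-torsor remark above), and then Lemma \ref{TT} gives the exact sequence $1\to\Tsigo\cap\Ls(\To)\to\Tsigo\times\Ls(\To)\to\To\to 1$, whose $wF$-cohomology sequence, combined with the identity $|G^\tau|=|H^1(\tau,G)|$ for a finite group $G$ applied to $G=\Tsigo\cap\Ls(\To)$, yields exactly $|\To^{wF}|/|\Ls(\To)^{wF}|=|\Tsigo^{wF}|$. (An equivalent repair of your route is to use the untwisted sequence $1\to\To^\sigma\to\To\xrightarrow{\Ls}\Ls(\To)\to 1$, keep the cokernel term $H^1(wF,\To^\sigma)\cong H^1(wF,\To^\sigma/\Tsigo)$, and cancel it against the index $[(\To^\sigma)^{wF}:\Tsigo^{wF}]$ --- but this cancellation must be carried out, not asserted.) The concluding step, matching $|W^\sigma|\inv\sum_w|\Tsigo^{wF}|$ with $|(\cC(\Gso)_{\text{ss}})^F|$ via the classical formula for the connected group $\Gso$ (or Lehrer's result), is fine and is what the paper does.
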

\begin{proof} Let us take $f=1$ in \ref{f sur qss}. We need to sum
over $w\in W^\sigma$ the value $|\Tun^{wF}/\Ls(\To)^{wF}|$.
First note that $|\Tun^{wF}/\Ls(\To)^{wF}|=|\To^{wF}/\Ls(\To)^{wF}|$.
By Lemma \ref{TT} we have the exact sequence
$$1\to\Tsigo\cap\Ls(\To)\to\Tsigo\times\Ls(\To)\to\To\to 1$$
whence the Galois cohomology exact sequence:
\begin{multline*}
1\to(\Tsigo\cap\Ls(\To))^{wF}\to\Tsigo^{wF}\times(\Ls(\To))^{wF}
\to\hfill\\
\hfill\To^{wF}\to  H^1(wF,(\Tsigo\cap\Ls(\To)))\to  1.
\end{multline*}
Using that for any automorphism $\tau$ of a finite
group $G$ we have $|G^\tau|=|H^1(\tau,G)|$,
we have $|(\Tsigo\cap\Ls(\To))^{wF}|=
|H^1(wF,(\Tsigo\cap\Ls(\To)))|$. Together with the above exact sequence
this implies that
$|\To^{wF}/\Ls(\To)^{wF}|=|\Tsigo^{wF}|$ whence
$$|(\Gunqss)^F|=|W^\sigma|\inv\sum_{w\in    W^\sigma}|\Tsigo^{wF}|.$$ 
The corollary follows by either applying the same formula for the connected
group $\Gso$, or referring to \cite[Proposition 2.1]{Le}.
\end{proof}

\comm{
Soit  $\cT^0$ la  vari\'et\'e des  couples $\bT\subset\bB$  o\`u $\bT$ est un
tore  maximal de  $\Go$ et  $\bB$ est  un sous-groupe  de Borel de $\Go$ le
contenant  (on  a  $\cT^0\simeq\Go/\bT$).  L'application  $\pi:\cT^0\to\cTun:
(\bT,\bB)\mapsto  N_\Gun(\bT,\bB)$ est un  rev\^etement de groupe $W^\sigma$,
donc on a une d\'ecomposition:
$\pi_*(\Qlbar)=\bigoplus_{\chi\in\Irr(W^\sigma)}\cF_\chi$.    Nous   fixons
$\bT^1_0\in\cTun$  et noterons $\Tun\mapsto  w(\Tun)$ l'application naturelle
$\cTunF\to  W^\sigma/\sim F$ qui  associe \`a un  \'el\'ement de $\cT1$ son
``type''  relatif  \`a  $\bT^1_0$  (l'\'el\'ement  $x\inv\lexp Fx$ o\`u $x$
conjugue  $\Tun$ sur $\bT^1_0$ --- deux ``tores'' ont m\^eme ``type'' si et
seulement si ils sont dans la m\^eme $\GoF$-orbite). On a $\Trace(F\mid
(\cF_\chi)_\Tun)=\chi(w(\Tun))$.
\begin{proposition}\label{faisceau} Si $x\in\GunF$, alors
$\sum_{\{\Tun\in \cT1\mid\Tun\owns x\}}\Trace(F\mid (\cF_\chi)_\Tun)=
R^\Gun_{\chi\otimes\varepsilon}\otimes\St_\Gun(x)$.
\end{proposition}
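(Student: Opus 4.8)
The plan is to reduce the identity to Corollary~\ref{R} together with a standard count of the rational ``tori'' of each type through~$x$. If $x$ is not \qss\ both sides vanish: the left-hand sum is then empty, while the right-hand side is a multiple of $\St_\Gun$, which vanishes off \qss\ elements by Proposition~\ref{st}. So we may assume $x$ is \qss; then $x$ lies in some $F$-stable ``torus'' (the ``tori'' through $x$ are in bijection with the maximal tori of $C_\bG(x)^0$ by \cite[1.8]{grnc}, and among these Lang's theorem provides an $F$-stable one), and as both sides are $\GoF$-class functions it suffices to prove equality at such an~$x$.

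For the right-hand side, recall $R^\Gun_{\chi\otimes\varepsilon}=|W^\sigma|\inv\sum_{w\in W^\sigma}(\chi\otimes\varepsilon)(w)\,R^\Gun_{\bT^1_w}(\Id)$, where $\varepsilon$ is the sign character of $W^\sigma$, $\bT^1_w$ is an $F$-stable ``torus'' of type~$w$ (so the summand depends only on the $F$-class of~$w$), $\Id$ is the trivial character of $(\bT^1_w)^F$, and $\chi(w),\varepsilon(w)$ denote the values at $wF$ of the extensions fixed by the $F$-structure on~$\cF_\chi$. Since $\bT^0_w$ has no proper parabolic subgroup, $D_{\bT^1_w}=\Id$, hence $\St_{\bT^1_w}=\Id$, and Corollary~\ref{R} applied to $\Lun=\bT^1_w$, $\lambda=\Id$ reads $R^\Gun_{\bT^1_w}(\Id)\,\eps_\Gun\St_\Gun=\eps_{\bT^1_w}\Ind^\GunF_{(\bT^1_w)^F}(\Id)$. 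Moreover $\eps_{\bT^1_w}=\eps_\Gun\varepsilon(w)$, since by definition $\eps_{\bT^1_w}$ is the sign of the associated maximal torus of $\Gso$, whose Weyl group is $W^\sigma$, and a torus of type~$w$ there has sign $\eps_\Gso\varepsilon(w)=\eps_\Gun\varepsilon(w)$. Multiplying by $\eps_\Gun$ the two copies of $\varepsilon(w)$ cancel, and evaluating at $x$ with the coset-induction formula from Proposition~\ref{mackey-tori} gives
$$(R^\Gun_{\chi\otimes\varepsilon}\otimes\St_\Gun)(x)=|W^\sigma|\inv\sum_{w\in W^\sigma}|(\bT^0_w)^F|\inv\,\chi(w)\,|\{g\in\GoF\mid\lexp gx\in\bT^1_w\}|.$$

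To identify this with $\sum_{\Tun\owns x}\Trace(F\mid(\cF_\chi)_\Tun)=\sum_{\Tun\owns x}\chi(w(\Tun))$, note that the $F$-stable ``tori'' of a fixed type $[w]$ form a single $\GoF$-orbit, and that $|N_\GoF(\bT^1_w)|=|(\bT^0_w)^F|\,|C_{W^\sigma}(wF)|$, obtained by taking $F$-fixed points (Lang) in $1\to\bT^0_w\to N_\Go(\bT^1_w)\to W^\sigma\to1$ exactly as in the connected case. Hence the number of $F$-stable ``tori'' of type $[w]$ containing $x$ is $|\{g\in\GoF\mid\lexp gx\in\bT^1_w\}|/|N_\GoF(\bT^1_w)|$, so grouping the left-hand side by type gives $\sum_{[w]}\chi(w)\,|\{g\in\GoF\mid\lexp gx\in\bT^1_w\}|/(|(\bT^0_w)^F|\,|C_{W^\sigma}(wF)|)$, whereas grouping the right-hand side by $F$-class, in which the class $[w]$ supplies $|W^\sigma|/|C_{W^\sigma}(wF)|$ equal terms, yields the same sum. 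This proves the proposition.

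The mathematical substance is light; the real work is in matching conventions — normalising $\cF_\chi$ so that $\Trace(F\mid(\cF_\chi)_\Tun)$ is literally the extension of $\chi$ at $w(\Tun)F$, defining $R^\Gun_{\chi\otimes\varepsilon}$ compatibly, and checking the sign identity $\eps_{\bT^1_w}=\eps_\Gun\varepsilon(w)$, which rests on $W(\Gso)=W^\sigma$. One could instead proceed cohomologically, writing $\sum_{\Tun\owns x}\Trace(F\mid(\cF_\chi)_\Tun)$ as the trace of $F$ on the $\chi$-isotypic part of $H^*_c((\cT^0)^{\ad x},\Qlbar)$ and identifying it with the Steinberg-twisted Deligne--Lusztig character directly; but that requires machinery not set up here, so I would carry out the elementary argument through Corollary~\ref{R}.
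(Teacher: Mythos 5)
Your argument is correct and is essentially the paper's own proof: the paper likewise inverts the sum over $W^\sigma$, reducing the identity to $|C_{W^\sigma}(wF)|\,\#\{\Tun\owns x\text{ of type }w\}=(R_w^\Gun\otimes\St_\Gun)(x)\varepsilon(w)=\Ind_{\bT_w\cdot\sigma}^\Gun(\Id)(x)$, with the last equality given by Corollary~\ref{R} and the first checked directly from the definition of $\Ind$, exactly as in your count via $|N_\GoF(\bT^1_w)|=|(\bT^0_w)^F|\,|C_{W^\sigma}(wF)|$. Your explicit treatment of the sign identity $\eps_{\bT^1_w}=\eps_\Gun\varepsilon(w)$ and of the non-\qss\ case just spells out steps the paper leaves implicit.
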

\begin{proof}   En  inversant  sur  $W^\sigma$  cela  revient  \`a  voir  que
$$|C_{W^\sigma}(w)|\#\{\Tun\owns  x\mid  \Tun\text{  est  de  type  }  w\}=
(R_w^\Gun\otimes\St_\Gun)(x)\varepsilon(w)=\Ind_{\bT_w.\sigma}^\Gun(\Id)
(x),$$  o\`u la derni\`ere  \'egalit\'e est par  \ref{R}. Cette \'egalit\'e est
imm\'ediate \`a v\'erifier en utilisant la d\'efinition de l'induit.
\end{proof}}

\section{Shintani descent}\label{shintani descent}
\comm{3rd of july, 1995}

We  now look at  Shintani descent in  our context; we  will show it commutes
with Lusztig induction when $\Gun/\Go$ is semi-simple and the characteristic
is good for $\Gso$. We  should mention previous work on
this  subject: Eftekhari (\cite[II. 3.4]{eftekhari}) has the same result for
Lusztig induction from a torus; he does not need  to assume $p$ good but needs $q$ to
be  large enough  to apply  results of  Lusztig identifying Deligne-Lusztig
induction  with induction  of character  sheaves; Digne (\cite[1.1]{digne})
has the result in the same generality as here apart from the assumption
that  $\Gun$ contains an $F$-stable quasi-central element; however a defect
of  his  proof  is  the  use  without  proof of the property given in Lemma
\ref{good} below.

As above $\Gun$ denotes an $F$-stable
connected  component of  $\bG$ of  the form $\Go\cdot\sigma$ where
$\sigma$ induces a quasi-central automorphism of $\Go$ commuting with $F$.

Applying   Lang's  theorem,  one  can  write   any  element  of  $\Gun$  as
$x\cdot\lexp{\sigma  F}x\inv\sigma$ for some $x\in\Go$, or as $\sigma
\cdot\lexp Fx\inv\cdot  x$ for some $x\in\Go$. Using  that $\sigma$, as
automorphism, commutes with $F$,  it is easy to  check that the
correspondence $x\cdot\lexp{\sigma
F}x\inv\sigma\mapsto \sigma\lexp Fx\inv\cdot x$ induces a bijection $\NFsF$
from  the $\GoF$-conjugacy  classes of  $\GunF$ to the $\Go^\sF$-conjugacy
classes of $(\Gun)^{\sigma F}$ and that $|\Go^\sF||c|=|\GoF||\NFsF(c)|$ for
any $\GoF$-class $c$ in $\GunF$. It follows that the operator $\ShFsF$
from $\GoF$-class functions on  $\GunF$ to $\Go^\sF$-class functions
on $(\Gun)^{\sigma F}$ defined by $\ShFsF(\chi)(\NFsF x)=\chi(x)$ is an
isometry.

The remainder of this section is devoted to the proof of the following
\begin{proposition}\label{Sh o sRLG}
Let $\Lun=N_\Gun(\Lo\subset\Po)$ be a ``Levi'' of $\Gun$ containing
$\sigma$, where $\Lo$ is $F$-stable; we have $\Lun=\Lo\cdot\sigma$.
Assume that $\sigma$ is semi-simple and that the characteristic is good for 
$\Gso$. Then
$$\ShFsF\circ\sRLG=\sRLG\circ\ShFsF \quad\text{ and }\quad
\ShFsF\circ\RLG=\RLG\circ\ShFsF.
$$
\end{proposition}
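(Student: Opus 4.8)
The plan is to follow the argument of \cite[1.1]{digne}, the point being that we may now invoke Lemma~\ref{good} below to supply the geometric input which \emph{loc.\ cit.}\ used without justification.

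\emph{Reduction to the $\sRLG$ statement.} Since $\ShFsF$ is an isometry between spaces of class functions of equal dimension it is invertible with $(\ShFsF)\inv=(\ShFsF)^*$, and since $\RLG$ and $\sRLG$ are mutually adjoint both for the $F$-structure and for the $\sF$-structure, taking adjoints in the identity $\ShFsF\circ\sRLG=\sRLG\circ\ShFsF$ gives $\ShFsF\circ\RLG=\RLG\circ\ShFsF$. So it suffices to prove the commutation of $\ShFsF$ with $\sRLG$ (where on the left $\sRLG$ is formed for the $F$-structure and on the right for the $\sF$-structure).

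\emph{Translation into cohomology.} Recall that $\sRLG$ for the $F$-structure is computed by Definition~\ref{RLG} from the variety $\bY_\bU^0=\{x\in\Go\mid x\inv\lexp Fx\in\bU\}$, and for the $\sF$-structure from $\bY_\bU^{0,\sF}=\{x\in\Go\mid x\inv\lexp{\sF}x\in\bU\}$ (which makes sense since $\sigma\in\Lun$ normalises $\Po$, hence $\bU$). Using that $\NFsF$ is a bijection on conjugacy classes with $|\Go^\sF|\,|c|=|\GoF|\,|\NFsF(c)|$, that $|\GunF|=|\GoF|$ and $|\Gun^\sF|=|\Go^\sF|$, and that for fixed $l$ the function $g\mapsto\Trace((g\inv,l)\mid H^*_c(\bY_\bU^0))$ is a class function, one expands both sides of $\ShFsF(\sRLG\gamma)=\sRLG(\ShFsF\gamma)$ on a class $\NFsF^\Lun(\overline l)$ and is reduced — for $\gamma$ an arbitrary class function — to the following geometric identity. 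Choosing Lang representatives $g=x\cdot\lexp{\sF}x\inv\cdot\sigma$ with $x\in\Go$, so that $h=\sigma\cdot\lexp Fx\inv\cdot x$ represents $\NFsF^\Gun(\overline g)$, and likewise $l=y\cdot\lexp{\sF}y\inv\cdot\sigma$ with $y\in\Lo$ and $l'=\sigma\cdot\lexp Fy\inv\cdot y$, one must show
$$\Trace\bigl((g\inv,l)\mid H^*_c(\bY_\bU^0)\bigr)=\Trace\bigl((h\inv,l')\mid H^*_c(\bY_\bU^{0,\sF})\bigr).$$

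\emph{The geometric identity.} To prove it one compares the $\ell$-adic cohomology of $\bY_\bU^0$ and of $\bY_\bU^{0,\sF}$ as modules carrying the finite-order automorphisms $(g\inv,l)$, respectively $(h\inv,l')$ — concretely via left and right translation by the Lang cocycles $x$, $y$ together with the $\sigma$-twist — so that the equality of traces follows from functoriality of cohomology. The key point, and where the hypotheses that $\sigma$ is semi-simple and that $p$ is good for $\Gso$ are used, is that when one reduces — as in the character formula~\ref{char. formula} — to the centraliser of the semi-simple part, the relevant sub-varieties and parabolic subgroups of $\Gso$-type behave as expected there: this is precisely the property that \cite[1.1]{digne} used without proof and that Lemma~\ref{good} below supplies. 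I expect this to be the main obstacle; granting it, what remains is bookkeeping with the definitions of $\sRLG$, $\ShFsF$ and $\NFsF$.
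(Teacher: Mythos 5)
Your two reductions are sound: the adjunction argument deducing the $\RLG$ statement from the $\sRLG$ one is exactly right, and restating the claim as a class-by-class identity of traces is legitimate, since $\Trace((g\inv,l)\mid H^*_c(\bY_\bU^0))$ is a class function in each variable separately. The gap is in the step you call the "geometric identity": the assertion that it "follows from functoriality of cohomology" via translations by the Lang cocycles and the $\sigma$-twist is not substantiated and does not work at the level of the full varieties. There is no equivariant comparison between $\bY_\bU^0$ for $F$ and for $\sF$ intertwining $(g\inv,l)$ with $(h\inv,l')$ in general. The twist that does the job, $x\mapsto x^\sFti$, only makes sense after one has first applied the character formula \ref{char. formula}(iii) to reduce both sides to two-variable Green functions on the varieties $\YUsF$ and $\YUF$ attached to the connected centralizer $(\bG^\sll)^0$ of the semi-simple part: its key property (Lemma \ref{Sh vlsigma}) rests on the relation $x^\sigma=x^{l\inv}$, which holds only for $x$ commuting with $\sll$. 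This reduction is also where the hypothesis that $\sigma$ is semi-simple enters (it forces the unipotent variables into $(\bG^\sll)^0$).

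Even granting that reduction, the identity does not close by "bookkeeping". After transporting everything through the twist, the argument of the class function is $(\sll v')^\sFti$, where $v'$ is the norm (Shintani) twist of the unipotent summation variable $v$ (Lemma \ref{v'}), whereas the trace carries $v^\sFti$; the two sides therefore differ by precisely the Shintani twist on the unipotent variable. The missing, genuinely nontrivial ingredient is that $v$ and $v'$ are conjugate in $\GlsosF$, i.e.\ that Shintani descent is trivial on unipotent classes in good characteristic (\cite[IV Corollaire 1.2]{memoire}); this is exactly where "$p$ good" is used, and Lemma \ref{good} is needed to pass goodness from $\Gso$ to the centralizer $(\bG^\sll)^0$ of an arbitrary \qss\ element so that this result applies there. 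Your proposal never invokes this conjugacy, and it also misdescribes the content of Lemma \ref{good}: it is a statement about good primes being inherited by centralizers of \qss\ elements of $\Go\cdot\sigma$, not about sub-varieties or parabolic subgroups "behaving as expected". Without the unipotent-conjugacy input, the argument is incomplete at its central step.
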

\begin{proof}
The second equality follows from the first by adjunction, using that the
adjoint of $\ShFsF$ is $\ShFsF\inv$.
Let us prove the first equality.

Let $\chi$ be a $\GoF$-class function on $\Gun$ and let
$\sll u=u\sll$ be the Jordan decomposition of an element of $(\Lun)^{\sigma F}$
with $u$ unipotent and $\sigma l$ semi-simple.
By the character formula \ref{char. formula}(iii) and the definition of
$Q^\Gto_\Lto$ for $t=\sigma l$ we have
$$\displaylines{(\sRLG\ShFsF(\chi))(\sll u)=\hfill\cr\hfill
|\GlsosF|\inv\sum_{v\in\GlsosF_\unip}\ShFsF(\chi)(\sll v)
\Trace((v,u\inv)|H^*_c(\YUsF)),\cr}$$
where   $v$ (resp.\ $u$)  acts  by   left- (resp.\ right-) translation  on  $\YUsF=\{x\in(\bG^\sll)^0\mid
x\inv\cdot\lexp{\sigma F}x\in\bU\}$ where $\bU$ denotes the unipotent radical of
$\Po$;  in the summation $v$ is in the identity component of $\bG^\sll$
since, $\sigma$ being semi-simple, $u$ is in $\Go$ hence in
$(\bG^\sll)^0$ by \cite[1.8 (i)]{grnc} since $\sll$ is semi-simple.

Let us write $l=\lexp F\lambda\inv\cdot\lambda$ with $\lambda\in\Lo$, so that
$\sll=\NFsF(\lps)$ where $l'=\lambda\cdot\lexp\sF\lambda\inv$.
\begin{lemma}\label{v'} For $v\in\GlsosF_\unip$
we have $\sll v=\NFsF((\sll\cdot v')^\sFti)$ where
$v'=n_{\sF/\sF}v\in\GlsosF$ is defined by writing 
$v=\lexp\sF\eta\cdot\eta\inv$ where
$\eta\in(\bG^\st)^0$ and setting $v'=\eta\inv\cdot\lexp\sF\eta$.
\end{lemma}
\begin{proof}
We have $\sll v=\sll\lexp\sF\eta\cdot\eta\inv=\lexp\sF\eta\sll\eta\inv=
\sigma\lexp F(\eta\lambda\inv)\lambda\eta\inv$, thus 
$\sll v=\NFsF((\lambda\eta\inv)\cdot\lexp\sF(\eta\lambda\inv)\sigma)$.
And we have $(\lambda\eta\inv)\cdot\lexp\sF(\eta\lambda\inv)\sigma=
\lambda v'\lexp\sF\lambda\inv\sigma=
\lexp F\lambda l v'\sigma\lexp F\lambda\inv=(\sll v')^\sFti$, thus
$\ShFsF(\chi)(\sll v)=\chi((\sll v')^\sFti)$.
\end{proof}

\begin{lemma}\label{Sh vlsigma} 
\begin{enumerate}
\item We have $(\sll)^\sFti=\lps$.
\item The conjugation $x\mapsto x^\sFti$ maps
$\bG^\sll$ and the action of $\sigma F$ on it, to $\bG^\lps$ with the action
of $F$ on it; in particular it
induces bijections 
$\GlsosF\xrightarrow\sim\GlpsosF$ and
$\YUsF\xrightarrow\sim\YUF$, where
$\YUF=\{x\in(\bG^\lps)^0\mid x\inv\lexp Fx\in\bU\}$.
\end{enumerate}
\end{lemma}
\begin{proof}
(i) is an obvious computation and shows that if $x\in\bG^\sll$ then
$x^\sFti\in\bG^\lps$. To prove (ii), it remains to show that if
$x\in\bG^\sll$ then $\lexp F(x^\sFti)=(\lexp{\sigma F}x)^\sFti$.
From $x^\sigma=x^{l\inv}=x^{\lambda\inv\cdot\lexp F\lambda}$, we get 
$x^\sFti=x^{\lambda\inv}$, whence $\lexp F(x^\sFti)=(\lexp F x)^{\lexp
F\lambda\inv}
=((\lexp{\sigma F}x)^\sigma)^{\lexp F\lambda\inv} =(\lexp{\sigma F}x)^\sFti$.
\end{proof}
Applying lemmas \ref{v'} and \ref{Sh vlsigma} we get
$$\displaylines{(\sRLG\ShFsF(\chi))(\sigma lu)=\hfill\cr\hfill
|\GlsosF|\inv\sum_{v\in\GlsosF_\unip}
\chi((\sll v')^\sFti)
\Trace((v^\sFti,(u^\sFti)\inv)\mid H^*_c(\YUF)).\cr}$$
\begin{lemma}\label{good}
Assume that the characteristic is good for $\Gso$, where $\sigma$ is a
quasi-central element of $\bG$. Then it is also good for
$(\bG^s)^0$ where $s$ is any \qss\ element of $\Go\cdot\sigma$.
\end{lemma}
\begin{proof}  Let $\Sigma_\sigma$ (resp.\ $\Sigma_s$) be the root system of
$\Gso$ (resp.\ $(\bG^s)^0$). By definition, a characteristic $p$ is good for
a  reductive  group  if  for  no  closed  subsystem  of its root system the
quotient  of the generated lattices  has $p$-torsion. The system $\Sigma_s$
is   not  a  closed  subsystem  of  $\Sigma_\sigma$  in  general,  but  the
relationship is expounded in \cite{qss}: let $\Sigma$ be the root system of
$\Go$  with respect to a $\sigma$-stable  pair $\bT\subset\bB$ of a maximal
torus  and a Borel subgroup  of $\Go$. Up to  conjugacy, we may assume that
$s$ also stabilizes that pair. Let $\overline\Sigma$ the set of sums of the
$\sigma$-orbits  in $\Sigma$, and $\Sigma'$ the set of averages of the same
orbits.  Then  $\Sigma'$  is  a  non-necessarily  reduced  root system, but
$\Sigma_\sigma$ and $\Sigma_s$ are subsystems of $\Sigma'$ and are reduced.
The  system $\overline\Sigma$ is reduced, and the set of sums of orbits
whose  average  is  in  $\Sigma_\sigma$ (resp.\ $\Sigma_s$) is a
closed   subsystem   that   we   denote  by $\overline\Sigma_\sigma$ (resp.\ 
$\overline\Sigma_s$).

We need now the following generalization of \cite[chap VI, \S1.1, lemme1]{bbk}
\begin{lemma} \label{bbk}
Let $\cL$ be a finite set of lines generating a vector space $V$ over a field
of characteristic 0; then two reflections of $V$
which stabilize $\cL$ and have a common eigenvalue $\zeta\neq 1$ with
$\zeta$-eigenspace the same line of $\cL$ are equal.
\end{lemma}
\begin{proof}
Here we mean by reflection an element $s\in\GL(V)$ such that $\ker(s-1)$ is a
hyperplane.
Let $s$ and $s'$ be reflections as in the statement. The product $s\inv s'$
stabilizes $\cL$, so has a power which fixes $\cL$,
thus is semi-simple. On the other hand $s\inv s'$ by assumption fixes one line 
$L\in \cL$ and induces the identity on $V/L$,
thus is unipotent. Being semi-simple and unipotent it has to be the
identity.
\end{proof}

It follows from \ref{bbk} that two root systems with proportional roots have
same Weyl group, thus same good primes; thus:
\begin{itemize}
\item $\Sigma_s$ and $\overline\Sigma_s$ have same good primes, as well
as $\Sigma_\sigma$ and $\overline\Sigma_\sigma$.
\item The bad primes for $\overline\Sigma_s$ are a subset of those for
$\overline\Sigma$, since it is a closed subsystem.
\end{itemize}
It only remains to show that the good primes for $\overline\Sigma$
are the same as for $\overline\Sigma_\sigma$, which can be checked case by case:
we can
reduce to the case where $\Sigma$ is irreducible, where these systems
coincide excepted when $\Sigma$ is of type $A_{2n}$; but in this case
$\overline\Sigma$ is of type $B_n$ and $\Sigma_\sigma$ is of type $B_n$ or 
$C_n$, which have the same set $\{ 2\}$ of bad primes.
\end{proof}

Since the characteristic is good for $\Gso$, hence also for
$(\bG^{\sigma l})^0$ by lemma \ref{good}, 
the elements $v'$ and $v$ are conjugate in $\GlsosF$ 
(see \cite[IV Corollaire 1.2]{memoire}).
By Lemma \ref{Sh vlsigma}(ii),
the element $v^\sFti$ runs over the unipotent elements
of $\GlpsosF$ when $v$ runs over $\GlsosF_\unip$.
Using moreover the equality
$|\GlsosF|=|\GlpsosF|$  we get 
\begin{multline}\tag{*}
(\sRLG\ShFsF(\chi))(\sigma lu)=
\frac 1{|\GlpsosF|}\sum_{u_1\in\GlpsosF_\unip}\chi(u_1l'\sigma)\\
\Trace((u_1,(u^\sFti)\inv)|H^*_c(\YUF)).\end{multline}
On the other hand by Lemma \ref{v'} applied with $v=u$, we have
$$(\ShFsF\sRLG(\chi))(\sll u)=
\sRLG(\chi)((\sll u)^\sFti)=\sRLG(\chi)(\lps\cdot
u^\sFti),$$
the second equality by Lemma \ref{Sh vlsigma}(i).
By the character formula this is equal to the right-hand side of 
formula~(*).
\end{proof}

\end{document}